\newtheorem{theorem}{Theorem}[section]  
\newtheorem{proposition}[theorem]{Proposition}
\newtheorem{lemma}[theorem]{Lemma}
\newtheorem{remark}[theorem]{Remark}
\begin{document}
\title{Exploring the Representation of Large Positive Integers as Sums of Prime Powers and Integer Powers: Analysis with Positive Density Subsets}
\author{Meng Gao\\ \small{Department of Mathematics, China University of Mining and Technology,}\\ \small{Beijing 100083, P. R. China}\\ \small{E-mail: ntmgao@outlook.com}  }

\date{}
\maketitle

\begin{abstract}
In this paper, we use the transference principle to investigate the representation of sufficiently large positive integers as the sum of prime powers and integer powers, where the primes are drawn from a positive density subset of the set of all primes , and the integer powers are drawn from a positive density subset of $ k $-th powers.
	
\begin{flushleft}
	\textbf{Keywords: transference principle; positive density; sums of prime powers and integer powers} 
\end{flushleft}

\end{abstract}

\section{Introduction}
For each prime $ p $, define $ \tau (k,p) $  so that $p^{\tau (k,p)}\ |\ k,\ p^{\tau (k,p)+1}\ \nmid\ k  $. Let

		\begin{center}
		$\begin{aligned} R_{k}:=\prod\limits_{(p-1)|k}p^{\gamma(k,p)}\end{aligned} $,
	\end{center}
where\begin{center}
  	$ \gamma(k,p):=  
  	\begin{cases}
		 \tau(k,p)+2 \ \quad if \   p=2 \  and \   \tau(k,p)>0   ,    \\
		 \tau(k,p)+1 \ \quad otherwise.
		\end {cases}$ 
	\end{center}
Let $ \mathbb{P} $ denote the set of all primes and $ k\geq2 $ be an integer. Set $ \mathbb{N}^{(k)}:=\{t^{k}:t \in \mathbb{N}\} $ , $ \mathbb{Z}_{m}^{(k)}:=\{t^{k}:t \in \mathbb{Z}_{m}\}$ and $ Z(m):=\{a \in \mathbb{Z}_{m}^{(k)}:(a,m)=1 \} $, where $ \mathbb{Z}_{m}:=\mathbb{Z}/m\mathbb{Z}  $. For $ m \in \mathbb{N} $ let $ P(m):=\prod\limits_{p\leq m}p^{k} $ and \begin{center}
	$ \mathcal{Z}_{k}:=\lim \limits_{m\rightarrow\infty}\dfrac{|\mathbb{Z}_{P(m)}^{(k)}|}{|\{a \in \mathbb{Z}_{P(m)}^{(k)}:(a, P(m))=1\}|} $.
\end{center}
From \cite[the proof of Lemma 6.3]{Sal}, we know that\begin{center}
	$ \begin{aligned}
		\mathcal{Z}_{k}=\sum\limits_{\substack{n=1\\n \ square-free}}^{\infty}\dfrac{1}{|Z(n^{k})|}.
	\end{aligned} $
\end{center}
Let\begin{center}
	$\begin{aligned}
	\mathcal{T}_{k}:=\sum\limits_{\substack{n=1\\n \ square-free}}^{\infty}\dfrac{1}{|Z(n^{2k})|}.
	\end{aligned}  $
\end{center}
Let   $ A\subseteq \mathbb{P} $ and $ B \subseteq \mathbb{N}^{(k)} $. Define
\begin{center}
	$ \delta_{A}=\underline{\delta}_{A}(\mathbb{P}):=\liminf \limits_{N\rightarrow\infty}\dfrac{|A\bigcap[N]|}{|\mathbb{P}\bigcap[N]|} $ 
\end{center}
and
\begin{center}
	$ \delta_{B}=\underline{\delta}_{B}(\mathbb{N}^{(k)}):=\liminf \limits_{N\rightarrow\infty}\dfrac{|B\bigcap[N]|}{|\mathbb{N}^{(k)}\bigcap[N]|}  $,
\end{center}
where $ [N]:=\{1,\ldots,N\} $.
	
Let $ \mathcal{P}\subseteq  \mathbb{P} $ be such that $ \pi_{\mathcal{P}}(x) \sim \delta\pi(x) $, where $ \delta \in (0,1) $ is a constant and $ \pi_{\mathcal{P}}(x):=\#\{p \leq x: p\in \mathcal{P}\} $.

The transference principle was originally developed by Green \cite{Gre} and can be considered as a modern variant of the circle method.  The transference principle can be applied to solve additive problems in dense subsets of primes. For example, Li, Pan \cite{LP} proved that for three positive density subsets $ A_{1}, A_{2}, $ and $ A_{3} $ of $ \mathbb{P} $, if the sum of their positive lower densities is greater than 2, then for all sufficiently large positive odd integers $ n $, it can be expressed in the form $ n=p_{1}+p_{2}+p_{3} $, where $ p_{i} \in A_{i} $ for all $ i \in \{1,2,3\} $; Shao \cite{Shao} proved that if the positive lower density of a subset $ A $ in the set of prime numbers is greater than $ 5/8 $, then all sufficiently large positive odd integers can be expressed as the sum of three primes from the set $ A $. 

Motivated by the work of Li, Pan \cite{LP} and Shao \cite{Shao}, Salmensuu \cite{Sal}  proved that a positive density subset of $ k $-th powers forms an asymptotic additive basis of order $ O(k^{2}) $ provided that the relative lower density of the set is greater than $ (1-\mathcal{Z}_{k}^{-1}/2)^{1/k} $. Using the transference lemma which was established in \cite{Sal}, the author \cite{Gao} investigated the density version of Waring-Goldbach problem and proved that if $ A $ is a subset of the primes, and the lower density of $ A $ in the primes is larger than $ 1-1/2k $, then every sufficiently large natural
number $ n $ satisfying the necessary congruence condition can be expressed as a sum of $ s $ terms of the $ k $-th powers of primes from set $ A $.

In this paper, by integrating the works of Salmensuu \cite{Sal} and the author \cite{Gao}, we investigate the representation of sufficiently large positive integers as the sum of prime powers and integer powers, where the primes are drawn from a positive density subset of the set of all primes, and the integer powers are drawn from a positive density subset of $ k $-th powers. Our results are as follows.
\begin{theorem}\label{theorem 1.1}
	Let $ s_{1}, s_{2} \in \mathbb{N},\ k\in \mathbb{N}\setminus\{1,2,4,8,9\},\ s_{1}\geq 16k\omega(k)+4k+4+s_{2}  $ and $ s_{1}+s_{2}>k^{2}+k $ . Let $ \delta_{A}>1-1/2k $ and $ k\delta_{A}+\mathcal{Z}_{k}\delta_{B}^{k}>\mathcal{Z}_{k}+k-1 $. Then, for all sufficiently large integers $ n \equiv s_{1}+s_{2} \ (\bmod \ R_{k}) $, we have 
\begin{center}
	$ n=p_{1}^{k}+\cdots+p_{s_{1}}^{k}+n_{s_{1}+1}^{k}+\cdots+n_{s_{1}+s_{2}}^{k} $,
\end{center}
where $ p_{i} \in A $ for all $ i \in \{1,\ldots,s_{1}\} $ and $ n_{j}^{k} \in B $ for all $ j \in \{s_{1}+1,\ldots,s_{1}+s_{2}\} $.
\end{theorem}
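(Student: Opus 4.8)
The plan is to run the transference principle, following Salmensuu \cite{Sal} and the author \cite{Gao}, adapted to the present mixed additive form. Set $N\asymp n$, take $X=Y\asymp n^{1/k}$, and introduce the normalized generating functions
$$ f(\theta)=\frac{1}{X}\sum_{\substack{p\le X\\ p\in A}}(\log p)\,e(p^{k}\theta),\qquad g(\theta)=\frac{1}{Y}\sum_{\substack{m\le Y\\ m^{k}\in B}}e(m^{k}\theta), $$
so that the weighted representation count is $r(n)=\int_{0}^{1}f(\theta)^{s_{1}}g(\theta)^{s_{2}}e(-n\theta)\,d\theta$, and it suffices to show $r(n)>0$ for all large $n\equiv s_{1}+s_{2}\ (\bmod\ R_{k})$. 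As usual one first applies a $W$-trick: choose $w\to\infty$ slowly, put $W=\prod_{p\le w}p^{c_{p}}$ with the $c_{p}$ large enough that $R_{k}\mid W$, and restrict $p$ and $m$ to appropriate residue classes modulo $W$; this strips off the small-prime local densities and makes the dilated sums equidistributed, changing the mean values of $f,g$ only by bounded factors.

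I would then verify the two hypotheses required by the transference lemma of \cite{Sal}. First, a restriction estimate and a pseudorandom enveloping-sieve majorant for $A$ inside the primes: the von Mangoldt-weighted indicator of $A$ in the chosen progression is pointwise dominated by a majorant with controlled Fourier coefficients and additive correlations, which is exactly the input used in \cite{Gao}; it is here that $\delta_{A}>1-1/2k$ enters, ensuring that the dense model $\tilde f\colon\mathbb{Z}_{N}\to[0,1]$ of $f$ has mean value $\widehat{\tilde f}(0)\ge\delta_{A}\mathfrak{S}_{A}+o(1)$, where $\mathfrak{S}_{A}$ is the relevant main-term density. Second, the analogous majorant for $B$ inside $\mathbb{N}^{(k)}$, supplied by \cite{Sal}: here $\mathcal{Z}_{k}$ appears in the main term and $\mathcal{T}_{k}$ bounds the mean square of the $k$-th-power majorant, yielding a dense model $\tilde g\colon\mathbb{Z}_{N}\to[0,1]$ with $\widehat{\tilde g}(0)\ge\delta_{B}\cdot(\text{density of }\mathbb{N}^{(k)})+o(1)$. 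In both cases the Fourier transforms of the models are uniformly close to those of $f$ and $g$.

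Next comes the model problem: show that
$$ \sum_{x_{1}+\cdots+x_{s_{1}}+y_{1}+\cdots+y_{s_{2}}\equiv n}\ \prod_{i=1}^{s_{1}}\tilde f(x_{i})\prod_{j=1}^{s_{2}}\tilde g(y_{j})\ \gg\ N^{(s_{1}+s_{2})/k-1}. $$
A generalized von Neumann argument removes the non-principal Fourier modes; this is where the hypotheses on $s_{1},s_{2}$ are spent, the Weyl/Hua-type bounds for $k$-th powers of primes forcing $s_{1}\ge16k\omega(k)+4k+4+s_{2}$ and the corresponding bound for ordinary $k$-th powers together with convergence of the singular integral forcing $s_{1}+s_{2}>k^{2}+k$ (the small exponents $k\in\{1,2,4,8,9\}$ being excluded, as in \cite{Sal} and \cite{Gao}, because of special local behaviour and the availability of the restriction estimates there). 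The count is then governed by the diagonal term $\widehat{\tilde f}(0)^{s_{1}}\widehat{\tilde g}(0)^{s_{2}}$, and the combinatorial core is the inequality $k\delta_{A}+\mathcal{Z}_{k}\delta_{B}^{k}>\mathcal{Z}_{k}+k-1$. Written as $k(1-\delta_{A})+\mathcal{Z}_{k}(1-\delta_{B}^{k})<1$, it says that the total density deficiency --- the loss from each of the $s_{1}$ prime slots running over $A$ instead of all primes, plus the loss from the $s_{2}$ slots running over $B$ --- stays strictly below the positive lower bound furnished by the singular series of the full Waring--Goldbach system; the congruence $n\equiv s_{1}+s_{2}\ (\bmod\ R_{k})$ is precisely the condition making that singular series positive, since $R_{k}$ captures the $p$-adic obstructions at the primes $p$ with $(p-1)\mid k$. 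Transferring back, the difference between $r(n)$ and the model count is absorbed by the majorant estimates of the previous step, which gives $r(n)>0$.

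The step I expect to be the main obstacle is making this density-deficiency accounting rigorous in the mixed system: unlike the pure-prime case of \cite{Gao}, which needs $k(1-\delta_{A})<1/2$, or the pure-power case of \cite{Sal}, which needs $\mathcal{Z}_{k}(1-\delta_{B}^{k})<1/2$, here one must show that mass can be lost from the prime sums and the power sums at the same time so long as only the combined inequality $k(1-\delta_{A})+\mathcal{Z}_{k}(1-\delta_{B}^{k})<1$ holds. This requires a careful comparison of $\widehat{\tilde f}(0)$ and $\widehat{\tilde g}(0)$ with their respective main-term densities and a precise tracking of how their product meets the singular series; the remaining work --- checking the restriction and correlation hypotheses of \cite{Sal} for $A$ uniformly in the $W$-trick parameters, and the routine major- and minor-arc estimates --- follows \cite{Gao} and \cite{Sal} with only notational changes.
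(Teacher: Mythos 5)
Your high-level picture — transference, $W$-trick, pseudorandom majorant with a restriction estimate, and a density-deficiency inequality at the core — matches the paper's strategy. But you have missed the key combinatorial step and consequently misdescribe what the two density hypotheses actually do.

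The paper works with $W_1=\prod_{1<p\le w}p^{k}$ and, for each residue $b\in Z(W_1)$, normalized functions $f'_b, f''_b:[N]\to\mathbb{R}_{\ge 0}$ (weighted indicators of $A$ and $B$ restricted to the arithmetic progression $W_1n+b$), together with their means $g_1(b,N)=\mathbb{E}_{n\in[N]}f'_b(n)$, $g_2(b,N)=\mathbb{E}_{n\in[N]}f''_b(n)$. Salmensuu's transference principle (Proposition \ref{pro 1.9}) is then applied not to a single dense model but to a convolution $f'_{b_1}\ast\cdots\ast f'_{b_{s_1}}\ast f''_{b_{s_1+1}}\ast\cdots\ast f''_{b_{s_1+s_2}}$ for a carefully chosen tuple $(b_1,\dots,b_{s_1+s_2})\in Z(W_1)^{s_1+s_2}$. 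Its mean-value hypothesis requires each $g_i(b_i,N)>\epsilon/2$ and $\sum_i g_{\cdot}(b_i,N)>(s_1+s_2)(1+\epsilon)/2$, while the congruence constraint forces $b_1+\cdots+b_{s_1+s_2}\equiv n_0\ (\bmod\ W_1)$. The entire difficulty is to show such a tuple exists; this is Lemma \ref{lem 3.1}, a pigeonhole argument over $Z(W_1)$ that needs two inputs: $\mathbb{E}_{b}h_1(b)>1/2$ and $\mathbb{E}_{b}(h_1(b)+h_2(b))>1$. These are exactly what $\delta_A>1-1/2k$ (equivalently $k\delta_A-(k-1)>1/2$) and $k\delta_A+\mathcal{Z}_k\delta_B^k>\mathcal{Z}_k+k-1$ provide, via Lemmas \ref{lem 3.2} and \ref{lem 3.3}. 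Your writeup reads the joint inequality as "the combinatorial core" and treats $\delta_A>1-1/2k$ as a side remark about $\widehat{\tilde f}(0)$, but that is backwards: the condition $\delta_A>1-1/2k$ is what makes the averaged mean of the prime-weight exceed $1/2$, which is \emph{structurally required} for the pigeonhole step; without it, the mean-value hypothesis of Proposition \ref{pro 1.9} cannot be arranged, no matter how large the combined density is. This is precisely why the paper needs eight separate theorems rather than a single statement with only the combined inequality.

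Two smaller inaccuracies: the constant $\mathcal{T}_k$ does not enter Theorem \ref{theorem 1.1} at all (it arises only for $k\in\{4,8,9\}$, where $W_2=\prod p^{2k}$ replaces $W_1$ and the lower bound for $\mathbb{E}_{b\in Z(W_2)}g_2(b,N)$ changes, cf.\ Lemma \ref{lem 4.4}); and Salmensuu's transference is an $L^q$-restriction/Fourier-majorant argument, not a Gowers-norm generalized von Neumann step, so describing the deduction in that language is a slight mismatch with what is actually cited.
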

\begin{theorem}\label{theorem 1.2}
	Let $ s_{1}, s_{2} \in \mathbb{N},\ k\in \mathbb{N}\setminus\{1,2,4,8,9\},\ s_{1}\geq 16k\omega(k)+4k+4+s_{2}  $ and $ s_{1}+s_{2}>k^{2}+k $ . Let $ \delta_{B}>(1-\mathcal{Z}_{k}^{-1}/2)^{1/k} $ and $ k\delta_{A}+\mathcal{Z}_{k}\delta_{B}^{k}>\mathcal{Z}_{k}+k-1 $. Then, for all sufficiently large integers $ n \equiv s_{1}+s_{2} \ (\bmod \ R_{k}) $, we have \begin{center}
		$ n=n_{1}^{k}+\cdots+n_{s_{1}}^{k}+p_{s_{1}+1}^{k}+\cdots+p_{s_{1}+s_{2}}^{k} $,
	\end{center}
	where $ n_{i}^{k} \in B $ for all $ i \in \{1,\ldots,s_{1}\} $ and $ p_{j} \in A $ for all $ j \in \{s_{1}+1,\ldots,s_{1}+s_{2}\} $.
\end{theorem}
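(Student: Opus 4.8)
The plan is to prove Theorem~\ref{theorem 1.2} by the transference principle, along the same lines as Theorem~\ref{theorem 1.1} but with the roles of the prime set $A$ and the power set $B$ interchanged, so that the individual density hypothesis falls on $B$ (Salmensuu's threshold $(1-\mathcal{Z}_{k}^{-1}/2)^{1/k}$) rather than on $A$ (Gao's threshold $1-1/2k$). Fix a large integer $n\equiv s_{1}+s_{2}\ (\bmod\ R_{k})$, choose a slowly growing parameter $w=w(n)\to\infty$, and let $W$ be a multiple of $R_{k}$ supported on the primes up to $w$. Split each of the $s_{1}$ power-variables $n_{i}$ and each of the $s_{2}$ prime-variables $p_{j}$ into residue classes modulo $W$, discarding classes carrying no admissible $k$-th power value and, for the prime variables, the non-reduced classes. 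After rescaling one is reduced to proving positivity of a weighted representation count $T(n)$, in which each $B$-variable carries the normalised indicator of $\{t\equiv b\ (\bmod\ W):t^{k}\in B\}$ and each $A$-variable a von Mangoldt--type weight supported on $\{p\equiv a\ (\bmod\ W):p\in A\}$; opening these weights into exponential sums over $\mathbb{Z}_{N}$ with $N\asymp n$ realises $T(n)$ as the circle-method integral inside the transference machinery.

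\textbf{Decomposition, restriction estimates, and transference.} For every variable write its weight as $f=f_{\flat}+f_{\sharp}$, where $f_{\flat}$ is the major-arc/structured component --- after the local densities (governed by $\mathcal{Z}_{k}$ for the powers, by the singular series for the primes) are extracted this is essentially the constant function equal to the relative density, $\delta_{B}$ for the $B$-variables and $\delta_{A}$ for the $A$-variables --- and $f_{\sharp}$ is the pseudorandom remainder. Two restriction estimates are required: (i) an $L^{p}$ restriction / enveloping-sieve bound for the $k$-th powers of primes in $A$, valid for some $p$ slightly larger than $2k$, which is exactly where the exclusion $k\notin\{1,2,4,8,9\}$ (so the sharp Weyl and Vinogradov mean-value bounds apply) and the inequality $s_{1}\ge 16k\omega(k)+4k+4+s_{2}$ (controlling the singular series and guaranteeing enough prime-power variables) are used, as in \cite{Gao}; and (ii) the companion restriction estimate for $k$-th powers in $B$ from \cite{Sal}. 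Hua-type / Vinogradov bounds together with $s_{1}+s_{2}>k^{2}+k$ ensure the resulting integrals converge absolutely. Substituting $f=f_{\flat}+f_{\sharp}$ into every slot of $T(n)$ and expanding into $2^{s_{1}+s_{2}}$ terms, the transference lemma of \cite{Sal} shows that each term containing at least one $f_{\sharp}$ is negligible compared with the main term (use the pseudorandom majorant on the $f_{\sharp}$ factor and Hölder on the rest); hence it suffices to prove $T_{\flat}(n)>0$, the count in the model problem in which each power-variable ranges over a fictitious density-$\delta_{B}$ subset of $k$-th powers and each prime-variable over a fictitious relative-density-$\delta_{A}$ subset of primes, in the admissible classes mod $W$.

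\textbf{The combinatorial core, and the main obstacle.} By the circle method for the model densities --- the singular integral is positive and the relevant Euler product converges once $s_{1}+s_{2}>k^{2}+k$ --- positivity of $T_{\flat}(n)$ reduces to solvability of the model congruence $x_{1}^{k}+\cdots+x_{s_{1}}^{k}+y_{1}^{k}+\cdots+y_{s_{2}}^{k}\equiv n\ (\bmod\ W)$ with the $x_{i}$ confined to a $\delta_{B}$-proportion of the admissible $k$-th-power residues and the $y_{j}$ to a $\delta_{A}$-proportion of the prime residues modulo $W$. Rewriting the joint hypothesis $k\delta_{A}+\mathcal{Z}_{k}\delta_{B}^{k}>\mathcal{Z}_{k}+k-1$ as $k(1-\delta_{A})+\mathcal{Z}_{k}(1-\delta_{B}^{k})<1$ exhibits it as a ``total deficiency below $1$'' condition, and a sumset/pigeonhole argument of Li--Pan and Shao type then produces a solution; the individual hypothesis $\delta_{B}>(1-\mathcal{Z}_{k}^{-1}/2)^{1/k}$, i.e.\ $\mathcal{Z}_{k}(1-\delta_{B}^{k})<1/2$, is precisely what lets the $B$-variables alone carry the last coordinate of that argument, which can absorb a deficiency of at most $1/2$. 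The choice of the exponents $\gamma(k,p)$ in $R_{k}$ guarantees that $n\equiv s_{1}+s_{2}\ (\bmod\ R_{k})$ is exactly the admissibility needed, so no further local obstruction survives, and $T_{\flat}(n)>0$ follows, hence $T(n)>0$ and the asserted representation. I expect the crux to be making this local step quantitative with sharp constants, so that $k\delta_{A}+\mathcal{Z}_{k}\delta_{B}^{k}>\mathcal{Z}_{k}+k-1$ comes out as exactly the threshold rather than something strictly stronger; the analytic half of the argument is a careful but essentially routine merging of the restriction/transference apparatus of \cite{Sal} and \cite{Gao}.
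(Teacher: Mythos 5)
Your proposal follows the same overall route as the paper: apply the W-trick, verify the mean conditions for the $W$-tricked weight functions by a Li--Pan/Shao-type covering argument over residue classes, and then invoke the transference machinery of \cite{Sal} together with the restriction estimates of \cite{Sal} and \cite{Gao}. The paper proves Theorem~\ref{theorem 1.2} exactly this way, simply substituting Proposition~\ref{pro 3.6} (the mean-value step with the individual hypothesis placed on $\delta_{B}$) for Proposition~\ref{pro 3.5} in the proof of Theorem~\ref{theorem 1.1}, and the ``deficiency below $1$'' heuristic you give for the joint and individual density conditions matches how Lemma~\ref{lem 3.1} is actually used.

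Two of your explanations of where the side hypotheses enter are, however, misattributed and would mislead an execution of the proof. First, the exclusion $k\notin\{1,2,4,8,9\}$ is not about sharper Weyl or Vinogradov mean-value inequalities in the restriction estimate (which hold for all $k$); it is needed because the pseudorandomness of the prime-power majorant $\nu'_{b}$ in (\ref{equ3}), built with the smaller modulus $W_{1}=\prod_{p\le w}p^{k}$, fails for $k\in\{2,4,8,9\}$ (see the Remark after Proposition~\ref{pro 1.9} and \cite[proof of Lemma 5.6]{Gao}), which is exactly why Section~4 switches to $W_{2}=\prod_{p\le w}p^{2k}$ for those $k$. Second, the inequality $s_{1}\ge 16k\omega(k)+4k+4+s_{2}$ does not control a singular series; it feeds into the sumset covering step of Lemma~\ref{lem 3.1} (via $s'A=\{a\equiv s'\pmod{R_{k}}\}$ for $s'\ge 8k\omega(k)+2k+2$), and the analytic constraint $s_{1}+s_{2}>k^{2}+k$ is there so the $L^{q}$ restriction bounds (Propositions~\ref{pro 3.11}--\ref{pro 3.12}) hold with $q\in(s-1,s)$; after the W-trick there is no Euler product or singular integral to control at the model level. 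Finally, the paper's combinatorial core is really about choosing residues $b_{1},\dots,b_{s_{1}+s_{2}}\in Z(W_{1})$ with large mean values $g_{1}(b_{i},N),g_{2}(b_{j},N)$ so that Salmensuu's Proposition~\ref{pro 1.9} applies directly to $f''_{b_{1}}\ast\cdots\ast f''_{b_{s_{1}}}\ast f'_{b_{s_{1}+1}}\ast\cdots\ast f'_{b_{s_{1}+s_{2}}}$, rather than a separate circle-method count of a model congruence; your framing is morally equivalent but the quantitative form in Lemma~\ref{lem 3.1}/Proposition~\ref{pro 3.6} is what actually delivers the stated thresholds.
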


\begin{theorem}\label{theorem 1.3}
	Let $ s_{1}, s_{2} \in \mathbb{N},\ k\in \mathbb{N}\setminus\{1,2,4,8,9\},\ s_{1}\geq 16k\omega(k)+4k+4+s_{2}  $ and $ s_{1}+s_{2}>k^{2}+k $ . Let $ \delta>1/2 $ and $ \delta+\mathcal{Z}_{k}\delta_{B}^{k}>\mathcal{Z}_{k} $. Then, for all sufficiently large integers $ n \equiv s_{1}+s_{2} \ (\bmod \ R_{k}) $, we have 
	\begin{center}
		$ n=p_{1}^{k}+\cdots+p_{s_{1}}^{k}+n_{s_{1}+1}^{k}+\cdots+n_{s_{1}+s_{2}}^{k} $,
	\end{center}
	where $ p_{i} \in \mathcal{P} $ for all $ i \in \{1,\ldots,s_{1}\} $ and $ n_{j}^{k} \in B $ for all $ j \in \{s_{1}+1,\ldots,s_{1}+s_{2}\} $.
\end{theorem}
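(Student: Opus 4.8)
The plan is to prove Theorem \ref{theorem 1.3} by the same transference-principle argument used in this paper for Theorems \ref{theorem 1.1} and \ref{theorem 1.2}, the one essential change being that the positive-lower-density set $A$ is replaced by $\mathcal{P}$, for which the full asymptotic $\pi_{\mathcal{P}}(x)\sim\delta\pi(x)$ is available. It is this stronger hypothesis, not derivable from a lower density alone, that lets the prime-side requirement soften from $\delta_{A}>1-1/2k$ (as in Theorem \ref{theorem 1.1}) to $\delta>1/2$ and the combined requirement from $k\delta_{A}+\mathcal{Z}_{k}\delta_{B}^{k}>\mathcal{Z}_{k}+k-1$ to $\delta+\mathcal{Z}_{k}\delta_{B}^{k}>\mathcal{Z}_{k}$; in particular Theorem \ref{theorem 1.3} does not follow from Theorem \ref{theorem 1.1} applied with $A=\mathcal{P}$, since $\delta>1/2$ need not reach $1-1/2k$. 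The tools are: the transference lemma of \cite{Sal}; the pseudorandom majorant for $k$-th powers from \cite{Sal}, whose local densities are governed by $\mathcal{Z}_{k}$ and whose correlation condition involves $\mathcal{T}_{k}$; and the enveloping-sieve majorant for $k$-th powers of primes together with the Weyl/Hua-type minor-arc bounds used in \cite{Gao}.

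First I would run the $W$-trick: take $w=w(n)\to\infty$ slowly, set $W=\prod_{p\le w}p^{\gamma(k,p)}$ so that $R_{k}\mid W$, and restrict the $s_{1}$ prime variables and the $s_{2}$ integer variables to residue classes modulo $W$ that are simultaneously admissible for $n\equiv s_{1}+s_{2}\ (\bmod\ R_{k})$ and in which $\mathcal{P}$, respectively $B$, keeps relative density $\delta-o(1)$, respectively $\delta_{B}^{k}-o(1)$; the existence of such classes uses $\pi_{\mathcal{P}}(x)\sim\delta\pi(x)$ for the primes and $\underline{\delta}_{B}(\mathbb{N}^{(k)})=\delta_{B}$ for the $k$-th powers, together with the large number $s_{1}$ of prime slots. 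One then majorises the $\log$-weighted, $W$-normalised indicator of $\{p^{k}:p\in\mathcal{P}\}$ by the enveloping-sieve majorant and the indicator of $B$ by the majorant of \cite{Sal}. The hypotheses $k\notin\{1,2,4,8,9\}$ and $s_{1}\ge16k\omega(k)+4k+4+s_{2}$ are exactly what is needed for the prime-power minor-arc estimates and for the majorant to satisfy the linear-forms and correlation conditions demanded of $s_{1}$-fold convolutions, while $s_{1}+s_{2}>k^{2}+k$ furnishes the asymptotic formula for the $(s_{1}+s_{2})$-term $k$-th power circle-method problem and convergence of the singular integral.

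Next I would apply the transference lemma of \cite{Sal}, in a two-function form (or after folding the prime-power and integer-power generating functions into a single convolution), to replace the $\mathcal{P}$-function by the constant model of value $\delta$ and the $B$-function by the constant model of value $\delta_{B}^{k}$, with an error that is $o$ of the main term for all large admissible $n$. It then remains to show the model count is positive: it equals a positive constant times $n^{(s_{1}+s_{2})/k-1}$, times the singular series $\mathfrak{S}(n)$ of the $W$-tricked $(s_{1}+s_{2})$-term problem, times a density factor built from the $\delta$'s. Because $n\equiv s_{1}+s_{2}\ (\bmod\ R_{k})$, every local factor of $\mathfrak{S}(n)$ is positive (here $\delta>1/2$ forces $\mathcal{P}$ to meet enough residue classes that the $s_{1}$-fold sumset of $k$-th-power residues, together with those from $B$ and the large count $s_{1}$ via Cauchy--Davenport, exhausts $\mathbb{Z}/q\mathbb{Z}$, so no obstruction beyond $\bmod\ R_{k}$ survives), whence $\mathfrak{S}(n)\gg1$; and tracking the $s_{1}$ prime slots (each worth $\delta$ against a baseline cost $1/2$) against the $s_{2}$ integer slots (each worth $\delta_{B}^{k}$ against the $k$-th-power baseline encoded by $\mathcal{Z}_{k}$, cf.\ the threshold $1-\mathcal{Z}_{k}^{-1}/2$ of \cite{Sal}), the density factor is positive precisely under $\delta>1/2$ and $\delta+\mathcal{Z}_{k}\delta_{B}^{k}>\mathcal{Z}_{k}$. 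This gives a representation of the required shape for every sufficiently large $n\equiv s_{1}+s_{2}\ (\bmod\ R_{k})$.

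The step I expect to be the main obstacle is reconciling the purely global hypothesis $\pi_{\mathcal{P}}(x)\sim\delta\pi(x)$ with the local (major-arc) part of the circle method: this asymptotic says nothing about $\mathcal{P}$ in a fixed arithmetic progression, so the $\mathcal{P}$-exponential sum cannot be evaluated on the major arcs in the usual way. The resolution is to avoid any such evaluation — one keeps only the $L^{\infty}$ restriction estimate inherited from the enveloping-sieve majorant together with the global mean value $\delta$, and lets the transference lemma absorb the rest — but making this quantitative, and in particular checking that the loss on the prime side is exactly the stated $1/2$ and not something larger, while keeping Salmensuu's $\mathcal{Z}_{k}$- and $\mathcal{T}_{k}$-bookkeeping consistent inside the combined $(s_{1}+s_{2})$-fold convolution, is where the real work lies.
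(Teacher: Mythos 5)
Your overall framework matches the paper's: the $W$-trick, Salmensuu's transference principle (Proposition~\ref{pro 1.9}), majorants $\nu_b'$, $\nu_b''$ with $L^\infty$ pseudorandomness and $L^q$ restriction estimates, and the crucial observation that the full asymptotic $\pi_{\mathcal{P}}(x)\sim\delta\pi(x)$ (rather than a mere lower-density hypothesis) is what lets the prime-side threshold drop to $\delta>1/2$. The paper's actual proof is just the proof of Theorem~\ref{theorem 1.1} with Proposition~\ref{pro 3.7} (mean condition) and Proposition~\ref{pro 3.13} (restriction for $\widehat{\mathbf{f}}_b$) substituted for Propositions~\ref{pro 3.5} and~\ref{pro 3.11}.

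Where your write-up departs, and where I think the gaps are: (1) You describe the model count as ``a singular series $\mathfrak{S}(n)$ times a density factor,'' with positivity of $\mathfrak{S}(n)$ secured by Cauchy--Davenport. The paper does not evaluate a singular series at all; Proposition~\ref{pro 1.9} returns a lower bound $c(\epsilon,s)N^{s-1}$ directly, and the ``no local obstruction'' work is entirely absorbed into the combinatorial Lemma~\ref{lem 3.1}, whose proof rests on Salmensuu's sumset result $s'A=\{a\in\mathbb{Z}_{W_1}: a\equiv s'\ (\bmod\ R_k)\}$ for $s'\geq 8k\omega(k)+2k+2$. (2) More seriously, you describe replacing the $B$-function ``by the constant model of value $\delta_B^k$.'' This understates the key lemma: what is actually proved (Lemma~\ref{lem 3.3}) is $\mathbb{E}_{b\in Z(W_1)}g_2(b,N)\geq(1-\epsilon)(\mathcal{Z}_k\delta_B^k-\mathcal{Z}_k+1)$, and it is exactly the appearance of $\mathcal{Z}_k$ here — coming from the concentration of $k$-th powers in the classes $Z(W_1)$ versus $\mathbb{Z}_{W_1}^{(k)}$ — that produces the threshold $\delta+\mathcal{Z}_k\delta_B^k>\mathcal{Z}_k$. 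Your proposal gets the threshold right but not the mechanism, and if one really ran your ``constant $\delta_B^k$'' argument one would land on the wrong inequality. The companion mean-value input on the prime side is Lemma~\ref{lem 3.4}, $\mathbb{E}_{b\in Z(W_1)}\mathbf{g}(b,N)\geq(1-\epsilon)\delta$, which you correctly flag as the crux but do not actually establish. (3) The mention of $\mathcal{T}_k$ in the ``correlation condition'' is a red herring for Theorem~\ref{theorem 1.3}: $\mathcal{T}_k$ only enters in Section~4 for $k\in\{4,8,9\}$, where a different $W$ (namely $W_2$) is needed because the pseudorandomness of the prime-power majorant fails for $W_1$; there is no Green--Tao linear-forms/correlation machinery in this paper, only Fourier $L^\infty$ and $L^q$ restriction.

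So: right framework, right thresholds, and a correct identification of where the real work lies, but the positivity argument is framed in a way the paper does not use, and the decisive mean-value lemmas (Lemmas~\ref{lem 3.3}, \ref{lem 3.4}, feeding Proposition~\ref{pro 3.7} via Lemma~\ref{lem 3.1}) are either misstated or left as the main unresolved obstacle rather than proved.
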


\begin{theorem}\label{theorem 1.4}
	Let $ s_{1}, s_{2} \in \mathbb{N},\ k\in \mathbb{N}\setminus\{1,2,4,8,9\},\ s_{1}\geq 16k\omega(k)+4k+4+s_{2}  $ and $ s_{1}+s_{2}>k^{2}+k $ . Let $ \delta_{B}>(1-\mathcal{Z}_{k}^{-1}/2)^{1/k} $ and $ \delta+\mathcal{Z}_{k}\delta_{B}^{k}>\mathcal{Z}_{k} $.Then, for all sufficiently large integers $ n \equiv s_{1}+s_{2} \ (\bmod \ R_{k}) $, we have \begin{center}
		$ n=n_{1}^{k}+\cdots+n_{s_{1}}^{k}+p_{s_{1}+1}^{k}+\cdots+p_{s_{1}+s_{2}}^{k} $,
	\end{center}
	where $ n_{i}^{k} \in B $ for all $ i \in \{1,\ldots,s_{1}\} $ and $ p_{j} \in \mathcal{P} $ for all $ j \in \{s_{1}+1,\ldots,s_{1}+s_{2}\} $.
\end{theorem}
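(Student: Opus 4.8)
The proof will follow the architecture of Theorem~\ref{theorem 1.2} and of \cite{Gao}, the one substantive change being that the prime variables are now drawn from a set $\mathcal P$ of \emph{exact} relative density $\delta$ among the primes; this is precisely what allows the hypothesis $k\delta_A+\mathcal Z_k\delta_B^k>\mathcal Z_k+k-1$ to be weakened to $\delta+\mathcal Z_k\delta_B^k>\mathcal Z_k$. Fix a large $n\equiv s_1+s_2\ (\bmod\ R_k)$, set $X=n^{1/k}$, and introduce a $W$-trick: choose $w=w(n)\to\infty$ slowly, let $W$ be a suitable $w$-smooth modulus (a high power of $\prod_{p\le w}p$, with exponents large enough to resolve all local $k$-th-power conditions, as in \cite{Gao}), and decompose each integer-power variable $n_i$ and each prime variable $p_j$ according to residue classes and smooth parts modulo $W$, retaining the tuples of classes compatible with $n\equiv\sum n_i^k+\sum p_j^k\ (\bmod\ W)$; at least one such tuple exists because $n\equiv s_1+s_2\ (\bmod\ R_k)$. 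For each admissible tuple one forms normalized weights: on the $B$-side a constant multiple of $\mathbf 1_B(m^k)$ over the chosen progression, normalized (with the constants $\mathcal Z_k,\mathcal T_k$ of \cite{Sal}) so that its $[X]$-average is $\ge(1-o(1))\mathcal Z_k\delta_B^k$ times the local factor; on the $\mathcal P$-side a constant multiple of $(\log m)\mathbf 1_{\mathcal P}(m)$ over the chosen progression, with $[X]$-average $\ge(1-o(1))\delta$ times its local factor. It is here that $\pi_{\mathcal P}(x)\sim\delta\pi(x)$ is essential: by the prime number theorem in progressions and partial summation, $\mathcal P$ still occupies a $(\delta+o(1))$-proportion of the primes in each fixed residue class, whereas a bare lower density would survive the decomposition only up to a factor $k$ (caused by the collapsing of residues under $x\mapsto x^k$), which is exactly the origin of the coefficient $k$ in Theorems~\ref{theorem 1.1}--\ref{theorem 1.2}. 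The representation count of $n$ is then the sum over admissible tuples of an $(s_1+s_2)$-fold convolution of these weights evaluated at $n$.

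The core of the argument is to feed these weights into Salmensuu's transference lemma \cite{Sal}. This requires three ingredients, each quotable from \cite{Sal} or \cite{Gao}. First, pointwise majorants: the $B$-weight is dominated by a pseudorandom majorant of the $k$-th powers (with $L^2$-flatness governed by $\mathcal Z_k$ and $\mathcal T_k$, as in \cite{Sal}) and the $\mathcal P$-weight by a Hardy--Littlewood-type majorant of the primes (as in \cite{Gao}). Second, restriction ($L^q$) estimates for both weights, with an exponent $q=q(k)$; for the $B$-weight this is the restriction estimate of \cite{Sal}, and for the $\mathcal P$-weight it follows from the primes restriction estimate of \cite{Gao}, since majorised functions inherit such bounds. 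These two rest on a major/minor-arc dissection in which the minor arcs are controlled by Weyl- and Hua-type bounds for the integer $k$-th powers together with Waring--Goldbach exponential-sum estimates for the prime $k$-th powers (here $k\notin\{1,2,4,8,9\}$ is needed, exactly as in \cite{Gao}, these exponents being excluded there for the same technical reasons), and the surplus of integer-power variables furnished by $s_1\ge 16k\omega(k)+4k+4+s_2$ is what is needed to defeat the minor arcs after paying for the restriction losses and for the $s_2$ prime variables. Third, the density lower bounds $\gg\mathcal Z_k\delta_B^k$ and $\gg\delta$ recorded above, which come from the definition of $\delta_B$, the hypothesis $\pi_{\mathcal P}(x)\sim\delta\pi(x)$, and Siegel--Walfisz.

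With these in hand, the transference lemma bounds the true count below by a constant times the \emph{model} count, in which the $B$-weights are replaced by the constant $\mathcal Z_k\delta_B^k$ and the $\mathcal P$-weights by the constant $\delta$ (against their majorants). One must check this model count is positive for every admissible $n$. The number of variables $s_1+s_2>k^2+k$ ensures that the underlying Waring--Goldbach main term --- the product of the singular integral and the singular series --- is $\gg 1$ at every place, and the congruence $n\equiv s_1+s_2\ (\bmod\ R_k)$ is the necessary condition that keeps the singular series nonzero, the local densities being those encoded by $R_k$, $\mathcal Z_k$, $\mathcal T_k$. It then remains to see that the density shrinkage does not destroy positivity: $\delta_B>(1-\mathcal Z_k^{-1}/2)^{1/k}$, i.e. $\delta_B^k>1-\mathcal Z_k^{-1}/2$, is Salmensuu's threshold making the $s_1$ integer-power variables alone dense enough to behave like a $k$-th-power basis modulo $W$, and $\delta+\mathcal Z_k\delta_B^k>\mathcal Z_k$, i.e. $\delta>\mathcal Z_k(1-\delta_B^k)$, is the balance condition ensuring that the $s_2$ primes from $\mathcal P$ exactly fill the residual density deficit, so that the convolution reaches $n$. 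Collecting constants, the true count is $\gg X^{s_1+s_2-k}(\log X)^{-s_2}>0$, which proves the theorem.

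I expect the principal obstacle to be organizational rather than conceptual: assembling a single transference argument that handles a convolution mixing two genuinely different weights --- integer $k$-th powers restricted to $B$ and prime $k$-th powers restricted to $\mathcal P$ --- with mutually compatible majorants, restriction exponents, $W$-trick normalizations and local factors, and then verifying that all error terms are $o$ of the model main term uniformly over the admissible residue tuples. The separate number-theoretic inputs (Hua's inequality, the Waring--Goldbach minor-arc bounds, the singular-series and singular-integral estimates, and the density computations) are standard or available from \cite{Sal} and \cite{Gao}; the delicate points are the $B$-side/$\mathcal P$-side interface and the confirmation that the exact-density hypothesis on $\mathcal P$ genuinely upgrades the combined density condition from $k\delta_A+\mathcal Z_k\delta_B^k>\mathcal Z_k+k-1$ to $\delta+\mathcal Z_k\delta_B^k>\mathcal Z_k$.
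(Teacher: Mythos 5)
Your architecture matches the paper's: $W$-trick with $W_{1}=\prod_{1<p\le w}p^{k}$, weighted indicator functions for $B$ and $\mathcal P$, and Salmensuu's transference lemma fed by mean-value, pseudorandomness, and restriction inputs quoted from \cite{Sal} and \cite{Gao}. You also correctly identify what the asymptotic hypothesis $\pi_{\mathcal P}(x)\sim\delta\pi(x)$ buys: it yields the mean bound $(1-\epsilon)\delta$ (Lemma \ref{lem 3.4}) in place of $k\delta_{A}-(k-1)-\epsilon$ (Lemma \ref{lem 3.2}), which is exactly why the threshold relaxes to $\delta+\mathcal Z_{k}\delta_{B}^{k}>\mathcal Z_{k}$.

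You have, however, swapped the roles of the two size constraints, and this hides the step you would actually need. The hypothesis $s_{1}\ge 16k\omega(k)+4k+4+s_{2}$ has nothing to do with minor arcs or ``paying for restriction losses''; it is consumed entirely in the combinatorial Lemma \ref{lem 3.1} (through Proposition \ref{pro 3.8}), which shows that the set of ``good'' residues $b$ with $h_{1}(b)$ above a threshold has $s'$-fold sumset equal to the whole congruence class $\{a\equiv s'\ (\bmod\ R_{k})\}$ in $\mathbb Z_{W_{1}}$ once $s'\ge 8k\omega(k)+2k+2$, and hence that one can select $b_{1},\dots,b_{s_{1}+s_{2}}$ so that the mean condition $\sum_{i}\mathbb E\,f_{b_{i}}>(s_{1}+s_{2})(1+\epsilon)/2$ of Proposition \ref{pro 1.9} genuinely holds. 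Your line ``at least one such tuple exists because $n\equiv s_{1}+s_{2}\ (\bmod\ R_{k})$'' is true but far short of this: existence of a compatible residue tuple does not produce the mean condition, and without Lemma \ref{lem 3.1} the transference lemma cannot be invoked. Conversely, $s_{1}+s_{2}>k^{2}+k$ is not a singular-series or singular-integral positivity condition; it is the hypothesis $s>k(k+1)$ that the restriction estimates (Propositions \ref{pro 3.11}--\ref{pro 3.13}) require, since they need $q\in(s-1,s)$ with $q>k(k+1)$. In the transference framework there is no explicit singular series to check; local solvability is entirely absorbed into Lemma \ref{lem 3.1}. As sketched, your argument is missing the step that actually delivers the mean condition of the transference lemma, and you are looking in the wrong place for where each of the two size hypotheses is used.
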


\begin{theorem}\label{theorem 1.5}
	Let $ s_{1}, s_{2} \in \mathbb{N},\ k\in \{4,8,9\},\ s_{1}\geq 16k\omega(k)+4k+4+s_{2}  $ and $ s_{1}+s_{2}>k^{2}+k $ . Let $ \delta_{A}>1-1/2k $ and $ k\delta_{A}+\mathcal{T}_{k}\delta_{B}^{k}>\mathcal{Z}_{k}+k-1 $. Then, for all sufficiently large integers $ n \equiv s_{1}+s_{2} \ (\bmod \ R_{k}) $, we have 
	\begin{center}
		$ n=p_{1}^{k}+\cdots+p_{s_{1}}^{k}+n_{s_{1}+1}^{k}+\cdots+n_{s_{1}+s_{2}}^{k} $,
	\end{center}
	where $ p_{i} \in A $ for all $ i \in \{1,\ldots,s_{1}\} $ and $ n_{j}^{k} \in B $ for all $ j \in \{s_{1}+1,\ldots,s_{1}+s_{2}\} $.
\end{theorem}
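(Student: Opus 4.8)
The plan is to re-run the proof of Theorem~\ref{theorem 1.1}, changing only the ingredients that break down for the three exceptional exponents $k\in\{4,8,9\}$. Set $P\asymp n^{1/k}$ and introduce the normalised weights $a$, supported on $\{p^{k}:p\in A,\ p\le P\}$, and $b$, supported on $\{m^{k}\in B:m\le P\}$; it then suffices to prove that the weighted representation function
\begin{equation*}
\mathcal{R}(n):=\sum_{m_{1}+\cdots+m_{s_{1}+s_{2}}=n}a(m_{1})\cdots a(m_{s_{1}})\,b(m_{s_{1}+1})\cdots b(m_{s_{1}+s_{2}})
\end{equation*}
is strictly positive for all sufficiently large $n$ with $n\equiv s_{1}+s_{2}\ (\bmod\ R_{k})$, and this is deduced, exactly as in Theorem~\ref{theorem 1.1}, by feeding $a$ and $b$ into the transference lemma of~\cite{Sal}. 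That lemma asks for two kinds of input: (i) minor-arc Weyl-type decay for the exponential sums attached to $a$ and $b$ against suitable pseudorandom majorant measures; and (ii) lower bounds, in terms of $\delta_{A},\delta_{B}$ and the local constants, for the corresponding major-arc masses, together with a positive lower bound for the singular series.

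Input (ii) is verified exactly as in Theorem~\ref{theorem 1.1}: the congruence condition $n\equiv s_{1}+s_{2}\ (\bmod\ R_{k})$ with $s_{1}+s_{2}>k^{2}+k$ forces local solubility of $n=\sum p_{i}^{k}+\sum n_{j}^{k}$ and a positive lower bound for the resulting singular series, the inequality $s_{1}\ge 16k\omega(k)+4k+4+s_{2}$ supplies the auxiliary prime-power variables needed for the mean-value estimates and the argument of~\cite{Gao}, and the density hypotheses $\delta_{A}>1-1/2k$ together with $k\delta_{A}+\mathcal{T}_{k}\delta_{B}^{k}>\mathcal{Z}_{k}+k-1$ are precisely the mass conditions of the transference lemma, once the local constant attached to the $B$-variables is identified as $\mathcal{T}_{k}$ rather than $\mathcal{Z}_{k}$. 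Since one checks $\mathcal{T}_{k}\le\mathcal{Z}_{k}$, this is a strictly stronger hypothesis than its counterpart in Theorem~\ref{theorem 1.1}, in line with $k=4,8,9$ being the harder exponents.

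The only genuinely new point — and the main obstacle — is input (i) for $k\in\{4,8,9\}$. For these exponents the pseudorandom majorant built naively from \emph{all} integer $k$-th powers does not satisfy the decay hypotheses required by the transference lemma, which is exactly why the exponents $4,8,9$ are excluded from Theorems~\ref{theorem 1.1}--\ref{theorem 1.4}. Following the classical device for Waring's problem with these $k$, and the treatment in~\cite{Sal}, I would instead build the majorant for $b$ from the sparser subfamily of $k$-th powers $m^{k}$ with $m$ restricted to a suitable union of residue classes — morally, with the relevant $p$-adic valuations of $m$ controlled — on which the Weyl sum does have enough cancellation. This restriction costs nothing on the density side, since $B$ still meets the subfamily in a proportion governed by $\delta_{B}$, but it replaces the local normalising constant $\mathcal{Z}_{k}=\sum_{n\ \text{square-free}}|Z(n^{k})|^{-1}$ by $\mathcal{T}_{k}=\sum_{n\ \text{square-free}}|Z(n^{2k})|^{-1}$ in the hypothesis, while leaving the prime-power side — hence the $\mathcal{Z}_{k}+k-1$ on the right — untouched, because the analysis of the $A$-variables via~\cite{Gao} is uniform in $k$. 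The main work is therefore to establish this minor-arc estimate for $k\in\{4,8,9\}$ and to redo the attendant local computation showing that the correct constant is exactly $\mathcal{T}_{k}$; with that in hand, the transference lemma yields $\mathcal{R}(n)>0$, completing the proof.
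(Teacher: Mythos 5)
Your high-level strategy---reuse the transference framework of Theorem~\ref{theorem 1.1} and adjust whatever breaks for $k\in\{4,8,9\}$---is the right one, but you have located the obstruction on the wrong side of the equation and proposed a fix that does not engage the real difficulty. The paper's remark following the theorem statements says explicitly that, for $k\in\{2,4,8,9\}$ and with the modulus $W_{1}=\prod_{p\le w}p^{k}$, what cannot be established is the pseudorandomness of the \emph{prime-power} majorant $\nu^{\prime}_{b}$ of equation~(\ref{equ3})---the weight attached to $p^{k}$, $p\in\mathbb{P}$---whereas the integer-power majorant $\nu^{\prime\prime}_{b}$ (Proposition~4.2 of Salmensuu) is fine for every $k\ge2$. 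Your proposal leaves the $A$-side untouched on the grounds that ``the analysis of the $A$-variables via [Gao] is uniform in $k$'' and instead restricts the integers $m$ to a sparser union of residue classes so that the $B$-side Weyl sum gains cancellation; this treats a nonexistent problem on the $B$-side while leaving the genuine failure on the $A$-side intact, so the transference lemma still cannot be invoked.

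The paper's actual device is to enlarge the $W$-trick modulus from $W_{1}$ to $W_{2}=\prod_{p\le w}p^{2k}$ and to rerun the entire Section~3 argument modulo $W_{2}$; with the larger modulus the prime-power pseudorandomness (Proposition~\ref{pro 4.11}, citing \cite[Prop.~5.1]{Gao}) does hold for all $k\ge2$. The appearance of $\mathcal{T}_{k}$ is then not a deliberate thinning of $B$, as in your sketch, but an automatic by-product of the change of modulus: computing the local mean of the $B$-weight modulo $W_{2}$ produces $|Z(n^{2k})|$ in place of $|Z(n^{k})|$ (Lemmas~\ref{lem 4.2}--\ref{lem 4.4}), giving $\mathcal{T}_{k}=\sum|Z(n^{2k})|^{-1}<\mathcal{Z}_{k}$; the strictly more demanding hypothesis on $\delta_{B}$ is the price paid for repairing the prime side, not a gain. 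As a smaller point, $s_{1}+s_{2}>k^{2}+k$ is not a local-solubility condition but the hypothesis $s>k(k+1)$ needed for the restriction estimates (Propositions~\ref{pro 4.13}--\ref{pro 4.15}); local solubility modulo $W_{2}$ and $R_{k}$ is handled by Lemma~\ref{lem 4.1}, which uses $s_{1}\ge 16k\omega(k)+4k+4+s_{2}$.
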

\begin{theorem}\label{theorem 1.6}
	Let $ s_{1}, s_{2} \in \mathbb{N},\ k\in \{8,9\},\ s_{1}\geq 16k\omega(k)+4k+4+s_{2}  $ and $ s_{1}+s_{2}>k^{2}+k $ . Let $ \delta_{B}>((\mathcal{Z}_{k}-1/2)\mathcal{T}_{k}^{-1})^{1/k} $ and $ k\delta_{A}+\mathcal{T}_{k}\delta_{B}^{k}>\mathcal{Z}_{k}+k-1 $.Then, for all sufficiently large integers $ n \equiv s_{1}+s_{2} \ (\bmod \ R_{k}) $, we have \begin{center}
		$ n=n_{1}^{k}+\cdots+n_{s_{1}}^{k}+p_{s_{1}+1}^{k}+\cdots+p_{s_{1}+s_{2}}^{k} $,
	\end{center}
	where $ n_{i}^{k} \in B $ for all $ i \in \{1,\ldots,s_{1}\} $ and $ p_{j} \in A $ for all $ j \in \{s_{1}+1,\ldots,s_{1}+s_{2}\} $.
\end{theorem}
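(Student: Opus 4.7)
The plan is to adapt the transference-principle framework of Salmensuu~\cite{Sal} together with the density Waring--Goldbach refinement from~\cite{Gao}, following the same overall strategy as in the proofs of Theorems~\ref{theorem 1.2} and~\ref{theorem 1.5}. The setup is identical to Theorem~\ref{theorem 1.2}, except that for the exceptional exponents $k\in\{8,9\}$ the usual single-variable moment bound on the $k$-th power side is insufficient, and one must instead use the higher-order variant leading to the constant $\mathcal{T}_{k}$ (built from moduli $n^{2k}$ rather than $n^{k}$), which is precisely what appears in the revised density hypothesis on $\delta_{B}$.

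First I would introduce the exponential sums
\[
f_{A}(\alpha)=\sum_{\substack{p\leq N^{1/k}\\ p\in A}} (\log p)\, e(p^{k}\alpha), \qquad f_{B}(\alpha)=\sum_{\substack{m\leq N^{1/k}\\ m^{k}\in B}} e(m^{k}\alpha),
\]
together with their standard majorants, and then produce dense model functions $\widetilde{f}_{A},\widetilde{f}_{B}$ on $\mathbb{Z}_{N}$ via the transference lemma of~\cite{Sal}. The number of desired representations of $n$ is essentially
\[
\int_{0}^{1} f_{B}(\alpha)^{s_{1}}\, f_{A}(\alpha)^{s_{2}}\, e(-n\alpha)\, d\alpha,
\]
and a standard major-arc analysis produces the usual Hardy--Littlewood main term. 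Convergence of the singular series, its non-vanishing and positivity of the singular integral follow from the numerical hypotheses $s_{1}+s_{2}>k^{2}+k$, $s_{1}\geq 16k\omega(k)+4k+4+s_{2}$ and the congruence $n\equiv s_{1}+s_{2}\pmod{R_{k}}$, exactly as in~\cite{Sal,Gao}.

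The two density conditions enter the transference step in the expected way. The inequality $\delta_{B}>((\mathcal{Z}_{k}-1/2)\mathcal{T}_{k}^{-1})^{1/k}$ ensures that, after majorising $f_{B}$ using the $\mathcal{T}_{k}$-expansion valid for $k\in\{8,9\}$, the dense model $\widetilde{f}_{B}$ retains the critical $L^{\infty}$ mass needed to apply the transference lemma; the inequality $k\delta_{A}+\mathcal{T}_{k}\delta_{B}^{k}>\mathcal{Z}_{k}+k-1$ then supplies the strict surplus in the convolution guaranteeing that the main term dominates. The principal obstacle I expect is the minor-arc analysis for $f_{B}$ when $k\in\{8,9\}$: here the classical Weyl bound is too weak, and one must substitute the averaged estimate underlying $\mathcal{T}_{k}$, which is precisely where the restriction $k\in\{8,9\}$ is essential (the case $k=4$ fails to admit even this weaker bound, explaining its absence from the statement). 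Once this minor-arc estimate is in hand, the combination of the two sides is routine bilinear bookkeeping, parallel in structure to Theorem~\ref{theorem 1.5} with the roles of $A$ and $B$ exchanged.
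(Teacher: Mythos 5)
Your proposal describes a hybrid of the classical Hardy--Littlewood circle method and a transference step, but the paper does something structurally different and your account of why $\mathcal{T}_{k}$ and the restriction $k\in\{8,9\}$ arise is incorrect.

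The paper never forms a singular series or carries out an explicit major/minor arc decomposition of an integral $\int f_B^{s_1}f_A^{s_2}e(-n\alpha)\,d\alpha$. Instead it reduces the theorem entirely to Salmensuu's transference statement (Proposition~\ref{pro 1.9}), which asks for three inputs: a mean condition, a pseudorandomness condition on majorants, and an $L^{q}$ restriction estimate. These are verified after a W-trick in which one works modulo $W_{2}=\prod_{1<p\le w}p^{2k}$ (equation~(\ref{equ4})) rather than the ``natural'' $W_{1}=\prod_{1<p\le w}p^{k}$. This is the source of $\mathcal{T}_{k}$: it is not a ``higher-order moment bound on the $k$-th power side'' or an ``averaged minor-arc estimate for $f_{B}$,'' but simply the constant $\sum_{n\ \mathrm{squarefree}}|Z(n^{2k})|^{-1}$ that replaces $\mathcal{Z}_{k}$ when the modulus has exponent $2k$ instead of $k$ (see Lemmas~\ref{lem 4.2}--\ref{lem 4.4}). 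The reason $W_{2}$ is used at all has nothing to do with the $k$-th power side $B$: as Remark~1.2 explains, for $k\in\{2,4,8,9\}$ one cannot establish the pseudorandomness estimate for the \emph{prime} majorant $\nu'_{b}$ defined via $W_{1}$, so one must pass to $W_{2}$, and then $\mathcal{T}_{k}$ appears as the price paid on the $B$-side mean value. Your attribution of the constant to a minor-arc difficulty with $f_{B}$ is therefore a misdiagnosis.

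Your explanation for excluding $k=4$ is also wrong. It is not that ``$k=4$ fails to admit even this weaker bound''; the analytic apparatus (Lemmas~\ref{lem 4.1}--\ref{lem 4.6}, Propositions~\ref{pro 4.11}--\ref{pro 4.15}) holds for $k\in\{4,8,9\}$ uniformly, which is why Theorems~\ref{theorem 1.5} and~\ref{theorem 1.7} do cover $k=4$. The only obstruction is that the hypothesis $\delta_{B}>((\mathcal{Z}_{k}-1/2)\mathcal{T}_{k}^{-1})^{1/k}$ together with $\delta_{B}\le 1$ requires $\mathcal{T}_{k}+1/2>\mathcal{Z}_{k}$, and this numerical inequality fails when $k=4$ (Remark~1.3(ii)). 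Beyond these conceptual issues, the proposal does not engage with the actual mechanism the paper uses to turn the density hypotheses into the mean condition of Proposition~\ref{pro 1.9}: one needs the combinatorial Lemma~\ref{lem 4.1}, which converts a lower bound for $\mathbb{E}_{b\in Z(W_{2})}g_{2}(b,N)$ exceeding $1/2$ (this is exactly what $\delta_{B}>((\mathcal{Z}_{k}-1/2)\mathcal{T}_{k}^{-1})^{1/k}$ provides via Lemma~\ref{lem 4.4}) and a lower bound on $\mathbb{E}_{b}(g_{1}+g_{2})$ exceeding $1$ (from the second density hypothesis via Lemmas~\ref{lem 4.4} and~\ref{lem 4.5}) into the existence of residues $b_{1},\ldots,b_{s_{1}+s_{2}}$ with the required sum of means -- this is the content of Proposition~\ref{pro 4.8}, which is what one applies in place of Proposition~\ref{pro 3.6} in the proof of Theorem~\ref{theorem 1.2}. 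That combinatorial step is the heart of the argument and is entirely absent from your outline.
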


\begin{theorem}\label{theorem 1.7}
	Let $ s_{1}, s_{2} \in \mathbb{N},\ k\in \{4,8,9\},\ s_{1}\geq 16k\omega(k)+4k+4+s_{2}  $ and $ s_{1}+s_{2}>k^{2}+k $ . Let $ \delta>1/2 $ and $ \delta+\mathcal{T}_{k}\delta_{B}^{k}>\mathcal{Z}_{k} $. Then, for all sufficiently large integers $ n \equiv s_{1}+s_{2} \ (\bmod \ R_{k}) $, we have 
	\begin{center}
		$ n=p_{1}^{k}+\cdots+p_{s_{1}}^{k}+n_{s_{1}+1}^{k}+\cdots+n_{s_{1}+s_{2}}^{k} $,
	\end{center}
	where $ p_{i} \in \mathcal{P} $ for all $ i \in \{1,\ldots,s_{1}\} $ and $ n_{j}^{k} \in B $ for all $ j \in \{s_{1}+1,\ldots,s_{1}+s_{2}\} $.
\end{theorem}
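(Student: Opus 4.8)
The plan is to establish Theorem~\ref{theorem 1.7} by the transference principle, following closely the arguments of Salmensuu~\cite{Sal} and of~\cite{Gao}, the only genuine novelty being the adjustments forced by the exceptional exponents $k\in\{4,8,9\}$, which is exactly where the constant $\mathcal{T}_{k}$ must take the place of $\mathcal{Z}_{k}$. First I would perform the standard $W$-trick: fix a slowly growing parameter $w=w(n)$, let $W$ be a suitable product of prime powers with $R_{k}\mid W$, and split each of the variables $p_{1},\dots,p_{s_{1}}$ and $n_{s_{1}+1},\dots,n_{s_{1}+s_{2}}$ into residue classes modulo $W$. For a target $n\equiv s_{1}+s_{2}\ (\bmod\ R_{k})$ this reduces matters to proving that, for all large $n$, the normalised weighted count
\[
r(n)=\sum_{\substack{p_{1}^{k}+\cdots+p_{s_{1}}^{k}+m_{s_{1}+1}^{k}+\cdots+m_{s_{1}+s_{2}}^{k}=n}}\nu_{1}(p_{1})\cdots\nu_{1}(p_{s_{1}})\,\nu_{2}(m_{s_{1}+1})\cdots\nu_{2}(m_{s_{1}+s_{2}})
\]
is positive, where $\nu_{1}$ is a renormalisation of the indicator of $\mathcal{P}$ in a good progression modulo $W$ (majorised by the usual von Mangoldt--type majorant) and $\nu_{2}$ is the analogous object attached to $B\subseteq\mathbb{N}^{(k)}$. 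Fourier expansion of $r(n)$ on $\mathbb{Z}/n\mathbb{Z}$ (equivalently, a circle--method decomposition) then splits $r(n)$ into a structured/major-arc part and a pseudorandom/minor-arc part.

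The three inputs I would quote and assemble are: (i) $L^{p}$-restriction (mean-value) estimates for the exponential sums $\sum_{p\in\mathcal{P},\,p\le X}(\log p)\,e(\alpha p^{k})$ and $\sum_{m^{k}\in B,\,m\le X^{1/k}}e(\alpha m^{k})$, of the type established in~\cite{Gao} and~\cite{Sal} respectively; here the hypotheses $s_{1}\ge 16k\omega(k)+4k+4+s_{2}$ and $s_{1}+s_{2}>k^{2}+k$ supply enough summands to absorb the loss in the Weyl/Hua bounds and hence to render the minor-arc contribution negligible, and also to secure absolute convergence of the singular series; (ii) the transference lemma of~\cite{Sal} in the form used in~\cite{Gao}, which replaces $\nu_{1},\nu_{2}$ by bounded ``dense model'' functions $g_{1},g_{2}$ on a finite abelian group having the same relative densities, up to an error swallowed by the main term; and (iii) positivity of the singular series for every $n\equiv s_{1}+s_{2}\ (\bmod\ R_{k})$, which is precisely why $R_{k}$ occurs, $R_{k}$ being the modulus encoding the $p$-adic solubility conditions of the Waring--Goldbach problem. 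Granting (i)--(iii), the theorem is reduced to a purely combinatorial statement about the model: the $(s_{1}+s_{2})$-fold convolution $g_{1}^{(*s_{1})}*g_{2}^{(*s_{2})}$ is bounded below on the relevant congruence class. Since $\mathcal{P}$ satisfies $\pi_{\mathcal{P}}(x)\sim\delta\pi(x)$, the prime part of the model has density exactly $\delta$ against its (singular-series-weighted) background, so $\delta>1/2$ lets the $s_{1}$ prime variables already saturate all residues, while $\delta+\mathcal{T}_{k}\delta_{B}^{k}>\mathcal{Z}_{k}$ is exactly the inequality forcing the surplus $\delta-1/2$ to outweigh the deficiency of $B$ inside the $k$-th powers; positivity then follows from the box/pigeonhole argument of~\cite{Sal}.

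The main obstacle, and the only point where $k\in\{4,8,9\}$ genuinely enters, is installing (ii)--(iii) for these exponents. For $k=4,8$ the prime $2$, and for $k=9$ the prime $3$, is ``bad'': the number of $k$-th power residues coprime to the relevant modulus degenerates, the naive dense model becomes too lossy, and the correct background mass turns out to be $\mathcal{Z}_{k}$ rather than a smaller quantity. As in~\cite[Lemma~6.3]{Sal}, the remedy is to restrict the integer-power variables to $k$-th powers $m^{k}$ with $m$ square-free and coprime to the bad prime; the weighted count of this sparser family is governed by $\mathcal{T}_{k}=\sum_{n\ \text{square-free}}|Z(n^{2k})|^{-1}$ instead of $\mathcal{Z}_{k}$, and since $|Z(n^{2k})|\ge|Z(n^{k})|$ one has $\mathcal{T}_{k}\le\mathcal{Z}_{k}$, which is why the admissibility inequality here is correspondingly stronger than the one in Theorem~\ref{theorem 1.3}. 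I would therefore have to: (a) re-derive the restriction estimate of~\cite{Sal} for this restricted subset $B'\subseteq B$ and check that the stated lower bounds on $s_{1}$ and on $s_{1}+s_{2}$ still suffice; (b) recompute the local factors at the bad prime, showing that the model density of the $B$-part is $\mathcal{T}_{k}\delta_{B}^{k}$ against background $\mathcal{Z}_{k}$; and (c) verify that, with these corrected constants, $\delta>1/2$ together with $\delta+\mathcal{T}_{k}\delta_{B}^{k}>\mathcal{Z}_{k}$ are precisely the thresholds making the model convolution positive. With this bookkeeping in place, the remainder of the argument is identical to that of Theorem~\ref{theorem 1.3}.
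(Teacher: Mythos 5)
Your high-level plan (W-trick, transference \`a la Salmensuu, restriction estimates, pseudorandomness) is the right skeleton and matches the paper's strategy. However, your account of what is special about $k\in\{4,8,9\}$, and your proposed remedy, are both wrong in ways that would derail the proof.

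You locate the obstruction in the integer-power part, claiming that for these exponents the naive dense model for $B$ "becomes too lossy" and that the fix is to restrict the integer-power variables to $m^k$ with $m$ square-free and coprime to the bad prime. Neither claim reflects the actual argument. The pseudorandomness of the integer-power majorant $\nu''_b$ (Salmensuu's \cite[Proposition~4.2]{Sal}, restated here as Proposition~\ref{pro 3.10}/\ref{pro 4.12}) holds for \emph{all} $k\ge2$. The genuine obstruction is in the \emph{prime} part: as flagged in Remark~1.10 and in \cite[proof of Lemma~5.6]{Gao}, the pseudorandomness estimate for the prime-power majorant $\nu'_b$ defined with $W_1=\prod_{p\le w}p^{k}$ (i.e.\ equation~(\ref{equ3})) cannot be established when $k\in\{2,4,8,9\}$. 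The paper's remedy is not to thin $B$ at all; it is to replace the modulus by $W_2=\prod_{p\le w}p^{2k}$ (equation~(\ref{equ4})) in the $W$-trick for \emph{all} the variables, after which the prime majorant's pseudorandomness (Proposition~\ref{pro 4.11}) does go through. The price of this larger modulus is paid elsewhere: when one then estimates the mean of the $B$-part weight $g_2$ over $b\in Z(W_2)$, the relevant ratio $|\mathbb{Z}_{W_2}^{(k)}|/|Z(W_2)|$ is no longer asymptotically $\mathcal{Z}_k$; Lemma~\ref{lem 4.2} only pins it between $(1-\epsilon)\mathcal{T}_k$ and $(1+\epsilon)\mathcal{Z}_k$. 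Feeding that two-sided bound into the argument of \cite[Lemma~6.2]{Sal} produces the weaker mean-value bound $\mathbb{E}_{b\in Z(W_2)}g_2(b,N)\ge(1-\epsilon)(\mathcal{T}_k\delta_B^k-\mathcal{Z}_k+1)$ of Lemma~\ref{lem 4.4}, and \emph{this} is the sole source of the mixed constant $\mathcal{T}_k\delta_B^k-\mathcal{Z}_k$ in the hypothesis $\delta+\mathcal{T}_k\delta_B^k>\mathcal{Z}_k$.

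Your version --- restricting $B$ to square-free bases coprime to a single "bad" prime --- does not produce $\mathcal{T}_k$; it would instead alter the effective density $\delta_B$, would require a fresh restriction estimate for a thinner set (which you acknowledge as task~(a) but do not supply), and would not address the real failure, namely pseudorandomness of $\nu'_b$ under the modulus $W_1$. As written, your plan would stall exactly at the pseudorandomness step for the prime majorant, the place where the paper's change of modulus is essential. To repair the proposal you should: abandon the restriction of $B$; work throughout with $W_2=\prod_{p\le w}p^{2k}$; prove (or cite) the two-sided bound of Lemma~\ref{lem 4.2} on $|\mathbb{Z}_{W_2}^{(k)}|/|Z(W_2)|$; derive the mean-value inequality of Lemma~\ref{lem 4.4}; combine it with Lemma~\ref{lem 4.6} (density $\delta$ for the $\mathcal{P}$-part under the same $W_2$) and the combinatorial Lemma~\ref{lem 4.1} to obtain Proposition~\ref{pro 4.9}; and then invoke the transference Proposition~\ref{pro 1.9} with the pseudorandomness and restriction inputs of Propositions~\ref{pro 4.11}--\ref{pro 4.15}.
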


\begin{theorem}\label{theorem 1.8}
	Let $ s_{1}, s_{2} \in \mathbb{N},\ k\in \{8,9\},\ s_{1}\geq 16k\omega(k)+4k+4+s_{2}  $ and $ s_{1}+s_{2}>k^{2}+k $ . Let $ \delta_{B}>((\mathcal{Z}_{k}-1/2)\mathcal{T}_{k}^{-1})^{1/k} $ and $ \delta+\mathcal{T}_{k}\delta_{B}^{k}>\mathcal{Z}_{k} $.Then, for all sufficiently large integers $ n \equiv s_{1}+s_{2} \ (\bmod \ R_{k}) $, we have \begin{center}
		$ n=n_{1}^{k}+\cdots+n_{s_{1}}^{k}+p_{s_{1}+1}^{k}+\cdots+p_{s_{1}+s_{2}}^{k} $,
	\end{center}
	where $ n_{i}^{k} \in B $ for all $ i \in \{1,\ldots,s_{1}\} $ and $ p_{j} \in \mathcal{P} $ for all $ j \in \{s_{1}+1,\ldots,s_{1}+s_{2}\} $.
\end{theorem}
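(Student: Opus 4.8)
The plan is to prove Theorem~\ref{theorem 1.8} by the Hardy--Littlewood circle method dressed with the transference principle, following the scheme already used for Theorems~\ref{theorem 1.1}--\ref{theorem 1.7}; concretely, Theorem~\ref{theorem 1.8} is the common refinement of Theorem~\ref{theorem 1.4} (bulk of the summands being $k$-th powers from $B$, primes drawn from the genuine-density set $\mathcal{P}$, general $k$) and Theorem~\ref{theorem 1.6} (the case $k\in\{8,9\}$, where the exceptional local factor forces $\mathcal{T}_k$ in place of $\mathcal{Z}_k$; $k=4$ is excluded because there $\mathcal{Z}_4-\mathcal{T}_4>1/2$, so the threshold $((\mathcal{Z}_4-1/2)\mathcal{T}_4^{-1})^{1/4}$ exceeds $1$ and the hypothesis on $\delta_B$ becomes vacuous). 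First I would apply the $W$-trick: choose a slowly growing $w=w(n)$, put $W=\prod_{p\le w}p$, discard the $\mathcal{P}$-primes with $p\le w$ and the $B$-powers $m^k$ with $(m,W)>1$, and keep $n$ in the class $s_1+s_2\ (\bmod\ R_k)$ --- which is exactly the condition making $\sigma_p(n)>0$ at the primes $p\mid R_k$. After weighting the prime sum by $\log p$ and rescaling so that every summand is confined to a fixed window of size $\asymp n$, the number of admissible representations is
\[
r(n)=\int_0^1 F(\alpha)^{s_1}G(\alpha)^{s_2}e(-n\alpha)\,d\alpha ,
\]
where $F(\alpha)=\sum_m e(m^k\alpha)$ runs over the retained $B$-powers and $G(\alpha)=\sum_p(\log p)\,e(p^k\alpha)$ over the retained $\mathcal{P}$-primes; the goal is $r(n)>0$.

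Next I would split $[0,1]$ into major arcs $\mathfrak{M}$ about rationals $a/q$ with $q\le(\log n)^{C}$ and minor arcs $\mathfrak{m}$. The minor-arc estimate is insensitive to $\mathcal{P}$ and $B$: since $s_1+s_2>k^2+k$ one has the sharp mean value bound $\int_0^1\bigl|\sum_{m\le P}e(m^k\alpha)\bigr|^{s_1+s_2}\ll P^{s_1+s_2-k+\varepsilon}$, and feeding this together with a Weyl/Hua-type pointwise bound for $F$ and a Vinogradov-type bound for $G$ on $\mathfrak{m}$, and an $L^{2t}$ restriction estimate (the constraint $s_1\ge16k\omega(k)+4k+4+s_2$ being there precisely to leave enough $B$-variables to run it), yields $\int_{\mathfrak{m}}|F|^{s_1}|G|^{s_2}\,d\alpha\ll n^{(s_1+s_2)/k-1-\eta}$ for some $\eta>0$; this is the minor-arc input of \cite{Sal} and \cite{Gao}, unchanged.

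The core is the major-arc contribution, which I would handle through the transference lemma of Salmensuu \cite{Sal} in the form adapted by the author in \cite{Gao}. On $\mathfrak{M}$, because $\pi_{\mathcal{P}}(x)\sim\delta\pi(x)$, the sum $G$ is well approximated by $\delta$ times the classical Waring--Goldbach generating function, so the $s_2$ prime variables contribute a clean factor $\delta$ with \emph{no} transference loss --- the mechanism taken over from Theorem~\ref{theorem 1.4}, and the reason the hypothesis reads $\delta+\mathcal{T}_k\delta_B^k>\mathcal{Z}_k$ rather than the $k\delta_A+\cdots>\mathcal{Z}_k+k-1$ of Theorem~\ref{theorem 1.6}. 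For the $B$-powers one writes $1_B=\delta_B\nu+(1_B-\delta_B\nu)$ with $\nu$ the enveloping-sieve majorant for $k$-th powers coprime to $W$, inserts this into the transference lemma, and is reduced to the model count, which up to harmless constants equals $\mathfrak{S}(n)\,\mathfrak{J}(n)$ with $\mathfrak{J}(n)\asymp n^{(s_1+s_2)/k-1}$ the singular integral and $\mathfrak{S}(n)=\prod_p\sigma_p(n)$ the singular series of the mixed problem carrying the densities $\delta$, $\delta_B$. For $k\in\{8,9\}$ the local factor at the exceptional prime ($p=2$ when $k=8$, $p=3$ when $k=9$) only stabilises at the modulus $p^{2k}$, which is precisely why $\mathcal{T}_k=\sum_n 1/|Z(n^{2k})|$ (sum over square-free $n$) takes over from $\mathcal{Z}_k$ throughout the bookkeeping. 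Unwinding the two density losses, $\mathfrak{S}(n)$ is bounded below by a positive multiple of $\delta+\mathcal{T}_k\delta_B^k-\mathcal{Z}_k$ --- valid because $\delta_B>((\mathcal{Z}_k-1/2)\mathcal{T}_k^{-1})^{1/k}$, i.e.\ $\mathcal{T}_k\delta_B^k>\mathcal{Z}_k-1/2$, means the $s_1$ $B$-variables alone keep every local factor non-degenerate (the $\mathcal{T}_k$-analogue of Salmensuu's threshold), after which the hypothesis $\delta+\mathcal{T}_k\delta_B^k>\mathcal{Z}_k$ renders the bound strictly positive, uniformly over large $n\equiv s_1+s_2\ (\bmod\ R_k)$. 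Combining with the minor-arc bound gives $r(n)\gg n^{(s_1+s_2)/k-1}>0$, which is the assertion.

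The step I expect to be the main obstacle is the refined $p$-adic bookkeeping for $k\in\{8,9\}$: one must verify that imposing $(m,W)=1$ on the $B$-variables and then replacing $1_B$ by $\delta_B\nu$ does not destroy solubility at the exceptional prime, that the modulus $p^{2k}$ really does freeze the local factor there, and --- most delicately --- that the transference on the $\mathcal{P}$-side and the transference on the $B$-side combine with exactly the loss encoded in $\delta+\mathcal{T}_k\delta_B^k>\mathcal{Z}_k$ and no more. Checking the hypotheses of the transference lemma itself --- the $L^{2t}$ restriction estimate for $F$ and the pseudorandomness of $\nu$ after the $W$-trick, which is where the quantitative constraint $s_1\ge16k\omega(k)+4k+4+s_2$ is consumed --- is laborious but runs on the template of \cite{Sal} and \cite{Gao} with no new idea.
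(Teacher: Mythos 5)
Your proposal is a hybrid circle-method sketch, not a reproduction of the paper's argument, and it contains several gaps and misattributions that would need to be repaired before it could stand as a proof.

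First, the $W$-trick is set up with the wrong modulus. You write $W=\prod_{p\le w}p$, the squarefree Green modulus. The paper, for $k\in\{4,8,9\}$, must use $W_{2}=\prod_{1<p\le w}p^{2k}$ (equation~(\ref{equ4})), and this is not a cosmetic choice: the pseudorandomness statement for the prime-power majorant $\nu'_{b}$ in (\ref{equ6}) — Proposition~\ref{pro 4.11}, cited from \cite[Proposition 5.1]{Gao} — can be proved only with exponent $2k$; with the exponent $k$ used for the remaining $k$, and a fortiori with exponent $1$, the proof of pseudorandomness breaks down (this is exactly the content of the first Remark after Proposition~\ref{pro 1.9}). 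With your $W$ the pseudorandomness condition of Proposition~\ref{pro 1.9} simply has no proof available, so the transference lemma never gets off the ground.

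Second, you misattribute the hypothesis $s_{1}\ge16k\omega(k)+4k+4+s_{2}$ to ``the $L^{2t}$ restriction estimate''. In the paper this constraint has nothing to do with restriction (restriction uses only $s_{1}+s_{2}>k(k+1)$, Propositions~\ref{pro 4.13}--\ref{pro 4.15}). It is consumed by the combinatorial residue-selection lemma, Lemma~\ref{lem 4.1}: one needs $s'\ge 8k\omega(k)+2k+2$ copies of the ``good'' residue set to cover a full congruence class modulo $W_{2}$ (via \cite[proof of Lemma 6.4]{Sal} with \cite[Proposition 4.2]{Gao}), and the extra $s_{2}$ accounts for the minority variables. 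Your sketch contains no analogue of this residue-selection step — the choice of $b_{1},\dots,b_{s_{1}+s_{2}}\in Z(W_{2})$ summing to $n$ modulo $W_{2}$ with each local mean $g_{i}(b_{j},N)>\epsilon/2$ and the total mean exceeding $(s_{1}+s_{2})(1+\epsilon)/2$, which is precisely Proposition~\ref{pro 4.10} — and without it the mean condition of Proposition~\ref{pro 1.9} cannot be verified.

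Third, your account of why $\mathcal{T}_{k}$ replaces $\mathcal{Z}_{k}$ (``the local factor at the exceptional prime only stabilises at modulus $p^{2k}$'') is not how the constant arises here. In the paper $\mathcal{T}_{k}$ enters through Lemma~\ref{lem 4.2}, the two-sided estimate $(1-\epsilon)\mathcal{T}_{k}\le |\mathbb{Z}_{W_{2}}^{(k)}|/|Z(W_{2})|\le(1+\epsilon)\mathcal{Z}_{k}$, which feeds into Lemma~\ref{lem 4.4} to give $\mathbb{E}_{b\in Z(W_{2})}g_{2}(b,N)\ge(1-\epsilon)(\mathcal{T}_{k}\delta_{B}^{k}-\mathcal{Z}_{k}+1)$; the mismatch between $\mathcal{T}_{k}$ (lower bound) and $\mathcal{Z}_{k}$ (upper bound) is the price paid for working with $W_{2}$ rather than $W_{1}$, not a statement about a single exceptional prime. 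Your singular-series bound ``$\mathfrak{S}(n)\gg\delta+\mathcal{T}_{k}\delta_{B}^{k}-\mathcal{Z}_{k}$'' is the right target numerically, but you give no argument for it, and the paper's route is entirely through the mean-value machinery above and the black-box Proposition~\ref{pro 1.9}, not through an explicit $\mathfrak{S}(n)\mathfrak{J}(n)$ decomposition. Finally, a small point: the reason $k=4$ is excluded in Theorem~\ref{theorem 1.8} is that $\mathcal{T}_{4}+1/2>\mathcal{Z}_{4}$ fails (so $\delta+\mathcal{T}_{4}\delta_{B}^{4}>\mathcal{Z}_{4}$ is incompatible with $\delta\le1$, $\delta_{B}\le1$), which is a slightly different reading than ``the threshold exceeds $1$'', though it amounts to the same obstruction.

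In short: the overall philosophy (transference after a $W$-trick) is the right one, but as written the proposal uses the wrong $W$, omits the combinatorial selection of residues that carries the mean condition, misassigns the key hypothesis to the wrong lemma, and replaces the paper's concrete verification of Proposition~\ref{pro 1.9}'s three conditions (Propositions~\ref{pro 4.10}--\ref{pro 4.15}) by an unverified singular-series heuristic.
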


Similar to \cite{Sal} and \cite{Gao}, we employ the transference principle established by Salmensuu in \cite{Sal} to prove the above results. Salmensuu's transference principle is the following.
\begin{proposition}\label{pro 1.9}(\cite[Proposition 3.9]{Sal})
Let $ s\geq 2, q\in (s-1,s), $ and $ \epsilon, \eta\in (0,1) $. Let $ N $ be a sufficiently large positive integer, and let $ f_{1},\ldots,f_{s}:[N]\rightarrow \mathbb{R}_{\geq 0} $ be a set of non-negative functions defined on $ [N] $. If $ f_{1},\ldots,f_{s} $ satisfy the following conditions:

(i)(Mean condition) For any $ i\in\{1,\ldots,s\} $, we have
\begin{center}
	$ \mathbb{E}_{n\in [N]}f_{i}(n)>\epsilon/2 $,\
\end{center}
and
\begin{center}
	$ \mathbb{E}_{n\in [N]}f_{1}(n)+\cdots+f_{s}(n)>s(1+\epsilon)/2 $;\
\end{center}

(ii)(Pseudorandomness condition) For any $ i\in\{1,\ldots,s\} $, there exists a majorant function $ \nu_{i}:[N]\rightarrow \mathbb{R}_{\geq 0} $ such that $ f_{i}\leq \nu_{i} $ and
\begin{center}
	$ ||\widehat{\nu_{i}} - \widehat{1_{[N]}}||_{\infty}\leq \eta N $;\
\end{center}

(iii)(Restriction estimate) For any $ i\in\{1,\ldots,s\} $, $ ||\widehat{f_{i}}||_{q}^{q}\ll N^{q-1} $.

Then for any positive integer $ n \in \big((1-\kappa^{2})\frac{sN}{2},(1+\kappa)\frac{sN}{2}\big) $, we have
\begin{center}
	$ f_{1}\ast\cdots\ast f_{s}(n)\geq c(\epsilon,s)N^{s-1} $,
\end{center}
where $ \kappa=\epsilon/32 $.	
\end{proposition}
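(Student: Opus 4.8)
I would prove Proposition~\ref{pro 1.9} by a transference / dense-model argument in the spirit of Green's original circle-method-over-sparse-sets technique~\cite{Gre}, adapted here to an $s$-fold additive convolution; the plan has four steps. First, I would pass from $[N]$ to a cyclic group: fix a prime $N'$ with $sN<N'\le 2sN$ and regard each $f_i$ and each majorant $\nu_i$ as a function on $\mathbb{Z}_{N'}$ supported on $\{1,\dots,N\}$. For $n$ in the stated range and $x_1,\dots,x_s\in[N]$ the equation $x_1+\cdots+x_s=n$ never wraps around modulo $N'$, so the cyclic convolution computed in $\mathbb{Z}_{N'}$ equals the integer one, and it suffices to bound the cyclic convolution from below.

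Next, I would build a dense model. Using the pseudorandomness hypothesis~(ii) --- which says the $\nu_i$ are Fourier-indistinguishable from $1_{[N]}$ --- I would split each $f_i=g_i+h_i$ on $\mathbb{Z}_{N'}$ by setting $g_i:=f_i\ast\beta$, where $\beta\ge 0$ is a normalised Bohr-cutoff (Fej\'er-type) kernel with $\sum\beta=1$ whose frequencies are adapted to the set $\Lambda$ of ``large'' Fourier coefficients of $f_1,\dots,f_s$; by the restriction estimate~(iii) one has $|\Lambda|=O_{s,q}(1)$. The properties I need, all standard to verify, are: $0\le g_i\le 1+o(1)$ pointwise, because $g_i\le\nu_i\ast\beta=1_{[N]}\ast\beta+(\nu_i-1_{[N]})\ast\beta\le 1+o(1)$ by~(ii); $\mathbb{E}g_i=\mathbb{E}f_i$, and $\operatorname{supp}g_i$ is a negligible thickening of $[N]$; $\|\widehat{h_i}\|_\infty=o(N)$, since $\widehat{h_i}=\widehat{f_i}\,(1-\widehat\beta)$ is small both near $\Lambda$ (where $\widehat\beta\approx 1$) and away from it (where $\widehat{f_i}$ is small); and $g_i,h_i$ still satisfy~(iii), as $|\widehat{g_i}|\le|\widehat{f_i}|$ and $|\widehat{h_i}|\le 2|\widehat{f_i}|$.

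Then I would expand
\[ f_1\ast\cdots\ast f_s=(g_1+h_1)\ast\cdots\ast(g_s+h_s) \]
into the main term $g_1\ast\cdots\ast g_s$ plus $2^s-1$ error terms, each carrying at least one factor $h_{i_0}$. Writing such a term at the point $n$ on the Fourier side as a single sum over $\theta$ of a product of $s$ Fourier coefficients, I would bound it by H\"older's inequality: since $q\in(s-1,s)$ one places $s-1$ of the factors in the restriction norm $\|\cdot\|_q$ (controlled by~(iii)) and splits the remaining factor as $|\widehat{h_{i_0}}|^{q-s+1}\cdot|\widehat{h_{i_0}}|^{\,s-q}$, the first part also going into $\|\cdot\|_q$ and the second contributing $\|\widehat{h_{i_0}}\|_\infty^{\,s-q}=o(N^{\,s-q})$. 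A routine exponent count then gives each error term $\ll N^{s-2}=o(N^{s-1})$, and there are only $O_s(1)$ of them.

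Finally, it remains to bound the main term: $g_1\ast\cdots\ast g_s(n)\ge c(\epsilon,s)N^{s-1}$ for $n\in\big((1-\kappa^2)\tfrac{sN}{2},(1+\kappa)\tfrac{sN}{2}\big)$ with $\kappa=\epsilon/32$, and here only the mean conditions survive: each $g_i:[N]\to[0,1+o(1)]$ with $\mathbb{E}_{[N]}g_i>\epsilon/2$ and $\sum_{i}\mathbb{E}_{[N]}g_i>s(1+\epsilon)/2$. Passing to the level sets $E_i:=\{x\in[N]:g_i(x)\ge\epsilon/4\}$ gives $g_1\ast\cdots\ast g_s\ge(\epsilon/4)^s\,1_{E_1}\ast\cdots\ast 1_{E_s}$, and since $g_i\le 1+o(1)$ one has $\sum_i|E_i|>\tfrac{sN}{2}(1+\sigma)$ for some $\sigma\asymp\epsilon$, i.e.\ the complements $[N]\setminus E_i$ have total size $<\tfrac{sN}{2}(1-\sigma)$; so the proposition reduces to the purely combinatorial statement that $s$ subsets of $[N]$ whose sizes sum to $\tfrac{sN}{2}(1+\sigma)$ have $s$-fold additive convolution $\gg_{\sigma,s}N^{s-1}$ throughout an interval of the stated shape. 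I would prove this by iterating a defect Cauchy--Davenport / interval-overlap estimate (the extremal configuration being intervals anchored at the two ends of $[N]$), and the asymmetric loss ``$\kappa^2$ on the left, $\kappa$ on the right'' is precisely what that iteration costs. I expect this combinatorial lemma --- together with the bookkeeping in the dense-model step needed to guarantee that the $g_i$ really are $[0,1+o(1)]$-valued with essentially the right support, so that the lemma applies --- to be the main obstacle; once the restriction estimate~(iii) is granted, the Fourier estimates for the error terms are routine.
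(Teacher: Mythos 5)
The paper does not contain a proof of this statement: Proposition~\ref{pro 1.9} is quoted verbatim as \cite[Proposition 3.9]{Sal} and used as a black box, so there is no internal argument to compare yours against. That said, the \emph{shape} of your outline is the standard one that Green, Li--Pan, Shao, Chow and Salmensuu all use: pass to $\mathbb{Z}_{N'}$ for a prime $N'\asymp sN$; split $f_i=g_i+h_i$ by convolving with a nonnegative Bohr/Fej\'er kernel $\beta$, using~(ii) to show $g_i$ is $[0,1+o(1)]$-valued with the same mean; kill the $2^s-1$ cross terms by H\"older together with the $L^q$ restriction estimate~(iii) and smallness of $\|\widehat{h_{i}}\|_\infty$; and lower-bound the main term $g_1\ast\cdots\ast g_s(n)$ via the mean conditions. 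Your H\"older splitting $|\widehat{h_{i_0}}|=|\widehat{h_{i_0}}|^{\,q-s+1}|\widehat{h_{i_0}}|^{\,s-q}$ with exponents $(\underbrace{q,\dots,q}_{s-1},\,q/(q-s+1))$ is correct, though the bound it yields is $\ll \eta^{\,s-q}N^{s-1}$ (hence $o(N^{s-1})$ once $\eta$ is small), not $\ll N^{s-2}$; there is no extra power saving from $\|\widehat{h_{i_0}}\|_\infty$ alone.

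The genuine gap is exactly where you flag it. You reduce the main term to the claim that $s$ subsets $E_1,\dots,E_s\subseteq[N]$ with $\sum_i|E_i|>\tfrac{sN}{2}(1+\sigma)$ and each $|E_i|\gg\epsilon N$ satisfy $1_{E_1}\ast\cdots\ast1_{E_s}(n)\gg_{\epsilon,s}N^{s-1}$ for every integer $n$ in the asymmetric window $\big((1-\kappa^2)\tfrac{sN}{2},(1+\kappa)\tfrac{sN}{2}\big)$ with $\kappa=\epsilon/32$, and then say you ``would prove this by iterating a defect Cauchy--Davenport / interval-overlap estimate.'' That sentence is the entire non-routine content of the proposition: one has to set up an induction in $s$ that simultaneously tracks (a) the surplus $\sum_i|E_i|-\tfrac{sN}{2}$, (b) an individual lower bound $|E_i|\gg\epsilon N$ so that no factor degenerates, and (c) the drift of the target $n$ relative to the moving ``centre'' $\tfrac{(s-j)N}{2}$ as factors are peeled off, and it is precisely this bookkeeping that produces the specific $\kappa^2$ on the left versus $\kappa$ on the right and the numerical choice $\kappa=\epsilon/32$. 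Without that argument, the proposal is an accurate top-level sketch of the transference template but does not constitute a proof of the proposition. A subsidiary point you should also nail down before the reduction is legitimate: the passage to level sets $E_i=\{g_i\geq\epsilon/4\}$ costs a factor $(\epsilon/4)^s$ and relies on $g_i\leq 1+o(1)$; you must verify that after this step the surplus $\sigma$ really is $\gtrsim\epsilon$ and stays strictly larger than $\kappa=\epsilon/32$, since the whole window collapses if $\sigma\leq\kappa$.
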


\begin{remark}
	In order to prove the above theorems, we choose the suitable weight characteristic functions using W-trick. When $ k\in \mathbb{N}\setminus\{1,2,4,8,9\} $, we define $ W $ by (\ref{equ1}). But when $ k\in \{4,8,9\} $ , we define $ W $ by (\ref{equ4}).  The two definitions of $ W $ in (\ref{equ1}) and (\ref{equ4}) respectively originate from \cite[Equ (8)]{Sal} and \cite[Equ (1)]{Gao}. The reason that we adopt two different definitions of $ W $ is as follows:
	
	When $ k\in \{2,4,8,9\} $, we failed to prove the pseudorandomness of (\ref{equ3})(see \cite[proof of Lemma 5.6]{Gao}). But we can prove the pseudorandomness of (\ref{equ6}) for all integers $k\geq 2 $. On the other hand, $ \mathcal{Z}_{k}>\mathcal{T}_{k} $(see Subsection 4.2). In order to obtain better density conditions, we did not choose to define $ W $ by (\ref{equ4}) when $ k\in \mathbb{N}\setminus\{1,2,4,8,9\} $.
\end{remark}
\begin{remark}
	For some small values of k, readers can refer to the table following \cite[Lemma 6.3]{Sal} for the values of $ \mathcal{Z}_{k} $. Upon calculation, we have $ \mathcal{T}_{2}\approx 2.085 $ and $ \mathcal{T}_{4}\approx 1.063 $.
	
(i)	When $ k = 2 $, we failed to draw relevant conclusions . The reason is that for any $ \delta_{A},\ \delta_{B},\ \delta\in (0,1] $, neither  \begin{center}
		$ k\delta_{A}+\mathcal{T}_{k}\delta_{B}^{k}>\mathcal{Z}_{k}+k-1  $
	\end{center}
nor \begin{center}
	$ \delta+\mathcal{T}_{k}\delta_{B}^{k}>\mathcal{Z}_{k} $
\end{center}
can hold when $ k=2 $;

(ii)In Theorem \ref{theorem 1.6} and Theorem \ref{theorem 1.8}, we did not obtain conclusions for $  k=4 $. The reason is that\begin{center}
	$ \mathcal{T}_{k}+1/2>\mathcal{Z}_{k} $
\end{center}
cannot hold when $  k=4 $.
\end{remark}

\begin{remark}
	In fact, from Theorem \ref{theorem 1.1} and Theorem \ref{theorem 1.2}, we can see that as long as $ k\delta_{A}-(k-1)+\mathcal{Z}_{k}\delta_{B}^{k}-\mathcal{Z}_{k}+1>1  $, we can express sufficiently large positive integers $ n $ satisfying the necessary  congruence condition in the form of the sum of prime powers and integer powers, where the primes are drawn from the set $ A $, and the integer powers are drawn from the set $ B $. This is because when $ k\delta_{A}-(k-1)+\mathcal{Z}_{k}\delta_{B}^{k}-\mathcal{Z}_{k}+1>1  $, we either have $ k\delta_{A}-(k-1)>1/2 $  or $ \mathcal{Z}_{k}\delta_{B}^{k}-\mathcal{Z}_{k}+1>1/2 $, which means either  $ \delta_{A}>1-1/2k  $ or $ \delta_{B}>(1-\mathcal{Z}_{k}^{-1}/2)^{1/k} $. Similar discussion is applicable to Theorems \ref{theorem 1.3} and \ref{theorem 1.4},  Theorems \ref{theorem 1.5} and \ref{theorem 1.6}, and   Theorems \ref{theorem 1.7} and \ref{theorem 1.8} .
\end{remark}

\section{Notation}
Let $ s \in \mathbb{N} $ and $ s\geq 2 $. For sets $ A,\ B \subseteq \mathbb{N} $, define the sumset by\begin{center}
	$ sA=\{a_{1}+\cdots+a_{s}:a_{1},\ldots,a_{s}\in A\},\ A+B=\{a+b:a\in A, b\in B\} $,
\end{center}
and define\begin{center}
	$ A^{(k)}=\{a^{k}:a\in A\} $.
\end{center}
For finitely supported functions $ f,g:\mathbb{Z}\rightarrow \mathbb{C} $ , we define convolution $ f\ast g  $ by 
\begin{center}
	$\begin{aligned}
	f\ast g(n)=\sum\limits_{a+b=n}f(a)g(b) \end{aligned}$.
\end{center}

The Fourier transform of a finitely supported function $ f:\mathbb{Z}\rightarrow \mathbb{C} $ is defined by
\begin{center}
	$\begin{aligned} \widehat{f}(\alpha)=\sum\limits_{n\in \mathbb{Z}}f(n)e(n \alpha) \end{aligned}$ ,
\end{center}
where $ e(x)=e^{2\pi ix} $.

For a set $ B $, we write $ 1_{B}(x) $ for  its characteristic function. If $ f:B\rightarrow \mathbb{C} $ is a function and $ B_{1} $ is a non-empty finite subset of $ B $, we write $ \mathbb{E}_{x \in B_{1}}f(x) $ for the average value of $ f $ on  $ B_{1} $,  that is to say
\begin{center}
	$\begin{aligned}
	\mathbb{E}_{x \in B_{1}}f(x)=\dfrac{1}{|B_{1}|}\sum\limits_{x \in B_{1}}f(x).
	\end{aligned}  $
\end{center}

We write $ f=o(g) $ if 
\begin{center}
	$ \lim\limits_{x\rightarrow\infty} \dfrac{f(x)}{g(x)}=0$ .
\end{center}
The function $ f $ is asymptotic to $ g $, denoted $ f\sim g $ , if
\begin{center}
	$ \lim\limits_{x\rightarrow\infty} \dfrac{f(x)}{g(x)}=1$ .
\end{center}
We will use notation $ \mathbb{T} $ for $ \mathbb{R}/\mathbb{Z} $.
We also define the $ L^{p}$-$norm $ 
\begin{center}
	$\begin{aligned}
	\|f\|_{p}=\bigg(\int_{\mathbb{T}}|f(\alpha)|^{p}d\alpha\bigg)^{1/p}\end{aligned} $
\end{center}
for a function $ f:\mathbb{T}\rightarrow \mathbb{C} $.

We write $ f \ll g $ or $ f=O(g) $ if there exists a constant $ C>0  $ such that $ |f(x)|\leq Cg(x) $ for all values of $ x $ in the domain of $ f $. The letter $ p $, with or without subscript,
denotes a prime number.

\section{ $ k\in \mathbb{N}\setminus\{1,2,4,8,9\} $ }
\subsection{Definitions}
Let $ s_{1},s_{2}\in \mathbb{N} $.
Let $ n_{0} $ be a sufficiently large positive integer satisfying $ n_{0}\equiv s_{1}+s_{2} \ (\bmod \ R_{k}) $ .
Let $ w=\log\log\log n_{0} $ and
\begin{equation}\label{equ1}
W_{1}:=\prod\limits_{1<p\leq w}p^{k} .
\end{equation}
Let $ b \in [W_{1}] $ be such that 
$ b \in Z(W_{1}) $. Define
\begin{equation}\label{equ2}
\sigma_{1}(b):=\#\{z \in [W_{1}]:z^{k}\equiv b\ (\bmod \ W_{1})\}.
\end{equation}
Let $ N:=\lfloor 2n_{0}/(sW_{1}) \rfloor $. It is not difficult to prove that
\begin{center}
	$ W_{1}=o(\log N) $.
\end{center}

Let $ A, \mathcal{P} \subseteq \mathbb{P}  $ satisfy $ \pi_{\mathcal{P}}(x)\sim \delta\pi(x),\  \delta\in (0,1]$ and $ B \subseteq \mathbb{N}^{(k)} $. Define functions $ f^{\prime}_{b},\ f^{\prime\prime}_{b},\ \mathbf{f}_{b},\ \nu^{\prime}_{b},\ \nu^{\prime\prime}_{b}:[N]\rightarrow \mathbb{R}_{\geq 0} $ by
\begin{center}
	$  f^{\prime}_{b}(n):=\begin{cases}
	\dfrac{\varphi(W_{1})}{W_{1}\sigma_{1}(b)}kp^{k-1}\log p \quad if \  W_{1}n+b=p^{k}, \ p\in A,  \\
	0  \qquad\qquad\qquad\qquad \ \ \  otherwise, 
	\end {cases}  $
\end{center}
\begin{center}
	$  f^{\prime\prime}_{b}(n):=\begin{cases}
	\dfrac{k}{\sigma_{1}(b)}t^{k-1} \quad if \  W_{1}n+b=t^{k}\in B,   \\
	0  \qquad\qquad \ \ \  otherwise, 
	\end {cases}  $
\end{center}
\begin{center}
	$  \mathbf{f}_{b}(n):=\begin{cases}
	\dfrac{\varphi(W_{1})}{W_{1}\sigma_{1}(b)}kp^{k-1}\log p \quad if \  W_{1}n+b=p^{k}, \ p\in \mathcal{P},  \\
	0  \qquad\qquad\qquad\qquad \ \ \  otherwise, 
	\end {cases}  $
\end{center}

\begin{equation}\label{equ3}
	  \nu^{\prime}_{b}(n):=\begin{cases}
	\dfrac{\varphi(W_{1})}{W_{1}\sigma_{1}(b)}kp^{k-1}\log p \quad if \  W_{1}n+b=p^{k}, \ p\in \mathbb{P},  \\
	0  \qquad\qquad\qquad\qquad \ \ \ otherwise, 
	\end {cases}  	
\end{equation}
and
\begin{center}
	$  \nu^{\prime\prime}_{b}(n):=\begin{cases}
	\dfrac{k}{\sigma_{1}(b)}t^{k-1} \quad if \  W_{1}n+b=t^{k}\in \mathbb{N}^{(k)},   \\
	0  \qquad\qquad\ \ \  otherwise. 
	\end {cases}  $
\end{center}
Define functions $ g_{1},\ g_{2},\ \mathbf{g}:[W_{1}]\times \mathbb{N}\rightarrow \mathbb{R}_{\geq 0} $ by
\begin{center}
	$ g_{1}(b,N):=\mathbb{E}_{n \in [N]}f^{\prime}_{b}(n), $
\end{center}
\begin{center}
	$ g_{2}(b,N):=\mathbb{E}_{n \in [N]}f^{\prime\prime}_{b}(n), $
\end{center}
and
\begin{center}
	$ \mathbf{g}(b,N):=\mathbb{E}_{n \in [N]}\mathbf{f}_{b}(n). $
\end{center}

It is not difficult to prove that $ \mathbb{E}_{n \in [N]} \nu^{\prime\prime}_{b}(n)\sim 1 $. Using the same method of \cite[Section 2]{Chow}, we can also prove that $ \mathbb{E}_{n \in [N]} \nu^{\prime}_{b}(n)\sim 1 $.

Let $ \epsilon \in (0,1) $. For $ b\in Z(W_{1}) $, define
\begin{center}
	$ f_{i}(b):=\max \bigg(0,\dfrac{1}{1+\epsilon}\big(g_{i}(b,N)-\epsilon/2 \big) \bigg) ,\ (i=1,2)$,
\end{center}
and
\begin{center}
	$ \mathbf{f}(b):=\max\bigg(0,\ \dfrac{1}{1+\epsilon}\big(\mathbf{g}(b,N)-\epsilon/2\big)\bigg) $.
\end{center}

Note that $ f^{\prime}_{b}(n) \leq \nu^{\prime}_{b}(n),\ f^{\prime\prime}_{b}(n) \leq \nu^{\prime\prime}_{b}(n) $ . Therefore, $ f_{i}(b)\in [0,1) $ provided that $ N $ is large enough for all $ i\in \{1,2\} $.

Unless otherwise specified, the symbols defined in this subsection will only be used in Section 3.	
\subsection{Mean value estimate}
\begin{lemma}\label{lem 3.1}
Let $ h_{1},h_{2}:Z(W_{1})\rightarrow [0,1) $ satisfy $ \mathbb{E}_{b \in Z(W_{1})}h_{1}(b)>1/2 $ and $ \mathbb{E}_{b \in Z(W_{1})}(h_{1}(b)+h_{2}(b))> 1 $. Let $ s_{2}\geq 1 $ and $ s_{1}\geq 16k\omega(k)+4k+4+s_{2} $. Then, for all $ n \in \mathbb{Z}_{W_{1}} $ with $  n \equiv s_{1}+s_{2} \ (\bmod \ R_{k}) $, there exist $ b_{1},\ldots,b_{s_{1}},b_{s_{1}+1},\ldots,b_{s_{1}+s_{2}} \in Z(W_{1}) $ such that: \\
(i) $ n\equiv b_{1}+\cdots+b_{s_{1}}+b_{s_{1}+1}+\cdots+b_{s_{1}+s_{2}} (\bmod\ W_{1})$ ;\\
(ii) $\ h_{1}(b_{i})>0 $ for all $ i \in \{1,\ldots,s_{1}\} $ and $\ h_{2}(b_{j})>0 $ for all $ j \in \{s_{1}+1,\ldots,s_{1}+s_{2}\} $;\\
(iii)
	$ h_{1}(b_{1})+\cdots+h_{1}(b_{s_{1}})+h_{2}(b_{s_{1}+1})+\cdots+h_{2}(b_{s_{1}+s_{2}})>(s_{1}+s_{2})/2. $
\end{lemma}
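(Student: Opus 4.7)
The strategy is to combine a density observation on the supports of $h_1,h_2$ in $Z(W_1)$ with a Waring--Goldbach-type representation lemma modulo $W_1$, followed by a pigeonhole/averaging step. Let $S_i:=\{b\in Z(W_1):h_i(b)>0\}$ for $i=1,2$. Since $h_1$ is $[0,1)$-valued and $\mathbb{E}h_1>1/2$, we have $|S_1|/|Z(W_1)|>1/2$; and since $h_1<1$ together with $\mathbb{E}(h_1+h_2)>1$ forces $\mathbb{E}h_2>0$, we have $S_2\neq\emptyset$. First I would pigeonhole over $S_2^{s_2}$ to select $b_{s_1+1},\ldots,b_{s_1+s_2}\in S_2$ with
$$\sum_{j=s_1+1}^{s_1+s_2}h_2(b_j)\;\geq\;s_2\,\mathbb{E}_{b\in S_2}h_2(b)\;\geq\;s_2\,\mathbb{E}h_2.$$
Every $b\in Z(W_1)$ is a $k$-th power unit modulo $W_1$, and the defining property of $R_k$ (together with $R_k\mid W_1$ once $w$ is large) gives $b\equiv 1\pmod{R_k}$; hence $n':=n-\sum_{j>s_1}b_j$ satisfies $n'\equiv s_1\pmod{R_k}$, the admissibility condition needed for the next step.

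The main step is to represent $n'$ as a sum of $s_1$ elements of $S_1$. Here I would invoke a density Waring--Goldbach representation lemma modulo $W_1$ of the following form: if $S\subseteq Z(W_1)$ has density exceeding $1/2$ and $s\geq 16k\omega(k)+4k+4$, then for every $n'\in\mathbb{Z}_{W_1}$ with $n'\equiv s\pmod{R_k}$, the number of solutions of $b_1+\cdots+b_s\equiv n'\pmod{W_1}$ with $b_i\in S$ is a positive proportion of $|S|^s/|Z(W_1)|$, and is asymptotically uniform as any individual $b_i$ ranges over $S$. This is the arithmetic content underlying the mean conditions of Proposition~\ref{pro 1.9} and should be available in, or derivable from, \cite{Sal} and \cite{Gao}; the threshold $16k\omega(k)+4k+4$ arises from Vinogradov-style estimates on $k$-th power sums modulo a product of prime powers (with the $\omega(k)$ factor accounting for ``bad'' primes $p\mid k$), corrected for the density-$1/2$ loss. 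Since $s_1\geq 16k\omega(k)+4k+4+s_2\geq 16k\omega(k)+4k+4$, the lemma applies with $s=s_1$, $S=S_1$.

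Pigeonhole/averaging among these representations then selects a tuple $(b_1,\ldots,b_{s_1})\in S_1^{s_1}$ with $\sum_{i=1}^{s_1}h_1(b_i)\geq s_1\,\mathbb{E}_{b\in S_1}h_1(b)\geq s_1\,\mathbb{E}h_1$. Condition~(i) holds by the construction of $n'$, condition~(ii) by the choices $b_i\in S_i$, and for~(iii) one has
$$\sum_{i=1}^{s_1}h_1(b_i)+\sum_{j=s_1+1}^{s_1+s_2}h_2(b_j)\;\geq\;s_1\mathbb{E}h_1+s_2\mathbb{E}h_2\;>\;\tfrac{s_1+s_2}{2},$$
the last inequality following from the identity $s_1\mathbb{E}h_1+s_2\mathbb{E}h_2=(s_1-s_2)\mathbb{E}h_1+s_2\,\mathbb{E}(h_1+h_2)$ combined with $\mathbb{E}h_1>1/2$, $\mathbb{E}(h_1+h_2)>1$, and $s_1>s_2$. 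The hardest step will be the density Waring--Goldbach lemma with the explicit threshold $16k\omega(k)+4k+4$: once the required equidistribution of representations is in hand, the remaining averaging is routine, and the assumption $s_1\geq 16k\omega(k)+4k+4+s_2$ supplies exactly the room both to invoke that lemma and to secure the slack $s_1>s_2$ used in the final rearrangement.
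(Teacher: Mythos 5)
Your outline diverges from the paper's proof and, as written, has a genuine gap at its core. The key step you rely on---that the number of solutions of $b_1+\cdots+b_{s_1}\equiv n'\pmod{W_1}$ with all $b_i\in S_1$ is not only positive but \emph{equidistributed in each coordinate}, so that averaging $\sum h_1(b_i)$ over representations reproduces $s_1\,\mathbb{E}_{b\in S_1}h_1(b)$---is a much stronger statement than what is available from \cite{Sal}. The result actually used in the paper (the adaptation of \cite[Lemma 6.4]{Sal}) is a \emph{coverage} statement only: a set $A\subseteq Z(W_1)$ of density $>1/2$ satisfies $s'A=\{a\in\mathbb{Z}_{W_1}:a\equiv s'\pmod{R_k}\}$ once $s'\geq 8k\omega(k)+2k+2$. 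It guarantees one representation, not a positive proportion with uniform marginals. Establishing that the marginals are uniform would require a separate circle-method/exponential-sum argument over $\mathbb{Z}_{W_1}$; it is not routine and is nowhere in the references cited. Without it, picking a representation tells you nothing about the size of $\sum h_1(b_i)$, and condition (iii) does not follow. Note also that your quoted threshold $16k\omega(k)+4k+4$ is not the threshold in the underlying lemma; it is $8k\omega(k)+2k+2$, and the factor of two appears later for a different reason, as explained below.

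The paper sidesteps the need for equidistribution entirely. It sets $\mu_i=\max_{b}h_i(b)$, $\lambda=1-\mu_1$, and uses the \emph{level set} $A=\{b:h_1(b)>\lambda\}$ (not the support $\{h_1>0\}$). Since $\mathbb{E}h_1>1/2$ forces $\mu_1>1/2$ and hence $|A|/|Z(W_1)|>1/2$, the coverage lemma applies to $A$ with threshold $s'=8k\omega(k)+2k+2$. The crucial trick is then to \emph{repeat the maximizers}: one writes $n\equiv s''b'+s_2 b''+b_1'+\cdots+b_{s'}'\pmod{W_1}$ with $h_1(b')=\mu_1$, $h_2(b'')=\mu_2$, $b_i'\in A$, and $s''\geq s'+s_2$. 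Every repeated $b'$ contributes the maximal value $\mu_1>1/2$, every $b_i'$ contributes $>\lambda$, and $b''$ contributes $\mu_2>\lambda$; the arithmetic $s''\mu_1+s'\lambda+s_2\mu_2>\frac{s''+s'+s_2}{2}$ then gives (iii) \emph{deterministically}, with no averaging over representations. This is where $s_1=s''+s'\geq 2s'+s_2=16k\omega(k)+4k+4+s_2$ comes from. Your preliminary observations (density of the support, $b\equiv1\pmod{R_k}$ for $b\in Z(W_1)$ via \cite[Eq.\ (16)]{Sal}, and the final rearrangement $s_1\mathbb{E}h_1+s_2\mathbb{E}h_2>\frac{s_1+s_2}{2}$) are all sound, but the bridge you need between coverage and averaging is the missing---and hard---ingredient, and the maximizer-repetition device is precisely what lets the paper avoid it.
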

\begin{proof}
Let $ \mu_{i}:=\max_{b\in Z(W_{1})}h_{i}(b) $ for $ i\in \{1,2\} $ and $ \lambda=1-\mu_{1} $. Note that $ \mu_{1}+\mu_{2}\geq \mathbb{E}_{b \in Z(W_{1})}(h_{1}(b)+h_{2}(b))> 1 $. Therefore, we have $ \mu_{2}> \lambda $. Let $ A:=\{b\in Z(W_{1}):h_{1}(b)>\lambda \} $. By repeating the arguments in \cite[proof of Lemma 6.4]{Sal}, we can get \begin{center}
$  $	$ s^{\prime}A=\{a\in \mathbb Z_{W_{1}}:a\equiv s^{\prime}\ (\bmod \ R_{k}) \} $
\end{center}
for all $ s^{\prime}\geq 8k\omega(k)+2k+2 $.

Let $ b^{\prime}, b^{\prime\prime}\in Z(W_{1}) $ be such that $ h_{1}(b^{\prime})=\mu_{1}, h_{2}(b^{\prime\prime})=\mu_{2} $ and let $ s^{\prime\prime}\geq s^{\prime}+s_{2} $. By \cite[Equ (16)]{Sal}, $ s^{\prime\prime}b^{\prime}+s_{2}b^{\prime\prime}\equiv s^{\prime\prime}+s_{2}\ (\bmod \ R_{k}) $. Therefore, for each $ n\in \mathbb{Z}_{W_{1}} $ with $ n\equiv s^{\prime}+s^{\prime\prime}+s_{2}\ (\bmod \ R_{k}) $, we have $ n-s^{\prime\prime}b^{\prime}-s_{2}b^{\prime\prime}\equiv s^{\prime} \ (\bmod \ R_{k}) $. Hence, there exist $ b_{1}^{\prime},\ldots,b_{s^{\prime}}^{\prime}\in A $ such that \begin{center}
	$ n-s^{\prime\prime}b^{\prime}-s_{2}b^{\prime\prime}\equiv b_{1}^{\prime}+\cdots+b_{s^{\prime}}^{\prime}\ (\bmod \ W_{1}) $
\end{center}
and
\begin{center}
	$\begin{aligned} s^{\prime\prime}h_{1}(b^{\prime})+h_{1}(b_{1}^{\prime})+\cdots+h_{1}(b_{s^{\prime}}^{\prime})+s_{2}h_{2}(b^{\prime\prime})
	&>s^{\prime\prime}\mu_{1}+s^{\prime}\lambda+s_{2}\mu_{2}\\
	&> s^{\prime\prime}\mu_{1}+(s^{\prime}+s_{2})\lambda\\
	&=(s^{\prime\prime}-(s^{\prime}+s_{2}))\mu_{1}+(s^{\prime}+s_{2})(\lambda+\mu_{1})\\
	&> \dfrac{s^{\prime\prime}-(s^{\prime}+s_{2})}{2}+s^{\prime}+s_{2}\\
	&=\dfrac{s^{\prime\prime}+s^{\prime}+s_{2}}{2}
	 \end{aligned}$
\end{center}
\end{proof}
Next, we provide the lower bounds for  $ \mathbb{E}_{b \in Z(W_{1})}g_{1}(b,N),\ \mathbb{E}_{b \in Z(W_{1})}g_{2}(b,N) $ and $ \mathbb{E}_{b \in Z(W_{1})}\mathbf{g}(b,N) $.
\begin{lemma}\label{lem 3.2}
Let $ \epsilon \in (0,1) $. Then
\begin{center}
	$ \mathbb{E}_{b \in Z(W_{1})}g_{1}(b,N) \geq k\delta_{A}-(k-1)-\epsilon $
\end{center}
provided that $ N $ is large enough depending on $ \epsilon $.	
\end{lemma}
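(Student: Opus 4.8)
The plan is to collapse the average $\mathbb{E}_{b\in Z(W_1)}g_1(b,N)$ into a single weighted sum over primes and then read off the bound from the prime number theorem and the lower density hypothesis on $A$. The observation that makes this work is that $\sigma_1(b)$ is actually \emph{independent of $b$}: if $b\in Z(W_1)$ and $z^k\equiv b\ (\bmod\ W_1)$ then automatically $(z,W_1)=1$, so $\sigma_1(b)$ is the cardinality of a fibre of the $k$-th power endomorphism of the finite abelian group $(\mathbb{Z}/W_1\mathbb{Z})^{\times}$. Every non-empty fibre of a group homomorphism has the same size, so there is a constant $\sigma$ with $\sigma_1(b)=\sigma$ for all $b\in Z(W_1)$; summing the identity $\sum_{b\in Z(W_1)}\sigma_1(b)=\varphi(W_1)$ then gives $\sigma=\varphi(W_1)/|Z(W_1)|$.

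With this in hand I would unwind the definitions. For fixed $b\in Z(W_1)$ the assignment $n\mapsto W_1n+b$ is a bijection between $\{n\in[N]:f'_b(n)\ne 0\}$ and the set of primes $p\in A$ with $p^k\equiv b\ (\bmod\ W_1)$ and $W_1<p^k\le W_1N+W_1$, so that
\[
g_1(b,N)=\frac{\varphi(W_1)}{W_1\sigma N}\sum_{\substack{p\in A,\ p^k\equiv b\ (\bmod\ W_1)\\ W_1<p^k\le W_1N+W_1}}kp^{k-1}\log p .
\]
Averaging over $b\in Z(W_1)$, every prime $p$ occurring here is coprime to $W_1$ (if $p\mid W_1$ then $p^k\le W_1$) and hence lies in exactly one class $b=p^k\bmod W_1\in Z(W_1)$, so the double sum telescopes and, using $\varphi(W_1)=\sigma\,|Z(W_1)|$, the normalising constant disappears:
\[
\mathbb{E}_{b\in Z(W_1)}g_1(b,N)=\frac{1}{W_1N}\sum_{\substack{p\in A\\ W_1<p^k\le W_1N+W_1}}kp^{k-1}\log p .
\]

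It then remains to bound this prime sum from below. Put $X:=(W_1N+W_1)^{1/k}$, so $X^k=W_1(N+1)\sim W_1N$ and $X\to\infty$ as $N\to\infty$. By the prime number theorem and partial summation $\sum_{p\le X}kp^{k-1}\log p=X^k+o(X^k)$, and the primes with $p^k\le W_1$ contribute at most $O(W_1\log W_1)=o(X^k)$ since $W_1=o(\log N)$. For the complementary primes I would bound each term crudely by $kp^{k-1}\log p\le kX^{k-1}\log X$ for $p\le X$ and combine this with $|\{p\le X:p\notin A\}|\le(1-\delta_A+o(1))\pi(X)$, which is immediate from the definition of $\delta_A$ as a lower density; this yields $\sum_{p\le X,\ p\notin A}kp^{k-1}\log p\le\big(k(1-\delta_A)+o(1)\big)X^k$. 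Subtracting, and dividing by $W_1N\sim X^k$, gives $\mathbb{E}_{b\in Z(W_1)}g_1(b,N)\ge k\delta_A-(k-1)-o(1)$, which is $\ge k\delta_A-(k-1)-\epsilon$ once $N$ is large enough in terms of $\epsilon$.

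The only genuinely delicate point is the first one: the weight $1/\sigma_1(b)$ in $f'_b$ is precisely what makes the average over $b$ collapse to a clean prime sum, and without the observation that it is constant on $Z(W_1)$ one would be forced to control how $A$ distributes among the residue classes modulo $W_1$ — information a lower density hypothesis does not supply. After that, the argument uses only the classical prime number theorem (no version in arithmetic progressions is needed), and the crude majorisation $kp^{k-1}\log p\le kX^{k-1}\log X$ is exactly calibrated to the target exponent $k\delta_A-(k-1)=1-k(1-\delta_A)$; a more careful partial summation would even give the sharper bound $1-\delta_A^k$, but that is not needed here.
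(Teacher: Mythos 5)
Your proof is correct and is the natural route — and almost certainly the one in [Gao, proof of Lemma~4.7], which the paper simply cites: the constancy of $\sigma_1(b)=\varphi(W_1)/|Z(W_1)|$ on $Z(W_1)$ (a fibre of the $k$-th power map on $(\mathbb{Z}/W_1\mathbb{Z})^{\times}$) collapses the average over $b$ to a single weighted prime sum, after which the prime number theorem and the crude per-term majorisation $kp^{k-1}\log p\le kX^{k-1}\log X$ for primes outside $A$ yield $k\delta_A-(k-1)-o(1)$. One minor slip in your closing aside: the sharper lower bound obtainable by a full Abel summation against the constraint $\theta_A(t)\ge(\delta_A-o(1))\theta(t)$ is $\delta_A^{\,k}$ (the non-$A$ primes contribute at most $(1-\delta_A^{\,k}+o(1))X^k$), not $1-\delta_A^{\,k}$; this does not affect your argument, which only needs the tangent-line bound $\delta_A^{\,k}\ge k\delta_A-(k-1)$ at the level of the crude estimate.
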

\begin{proof}
	See \cite[proof of Lemma 4.7]{Gao}.
\end{proof}

\begin{lemma}\label{lem 3.3}(\cite[Lemma 6.2]{Sal})
	Let $ \epsilon \in (0,1) $. Then
	\begin{center}
		$ \mathbb{E}_{b \in Z(W_{1})}g_{2}(b,N) \geq (1-\epsilon)(\mathcal{Z}_{k}\delta_{B}^{k}-\mathcal{Z}_{k}+1) $
	\end{center}
	provided that $ N $ is large enough depending on $ \epsilon $.	
\end{lemma}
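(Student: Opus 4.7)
The plan is to unfold the expectation into a weighted count of integers $t$ coprime to $W_1$ with $t^k \in B$, and then lower bound this quantity via the density hypothesis together with the multiplicative structure of $\mathcal{Z}_k$.

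First, I would observe that for every $b \in Z(W_1)$ the fiber count $\sigma_1(b)$ is constant, equal to $\varphi(W_1)/|Z(W_1)|$. This is because the $k$-th power map on the finite abelian group $(\mathbb{Z}/W_1)^{\times}$ has all fibers of the same common size over its image. Unfolding the definitions of $g_2$ and $f''_b$ and swapping the order of summation gives the identity
\begin{equation*}
\mathbb{E}_{b \in Z(W_1)} g_2(b,N) \;=\; \frac{1}{\varphi(W_1)\,N} \sum_{\substack{t \in \mathbb{N},\ (t,W_1)=1 \\ t^k \in B,\ W_1 < t^k \le W_1(N+1)}} k\, t^{k-1},
\end{equation*}
so the lemma is reduced to a lower bound on this weighted sieved count.

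Next, I would lower bound the sum $\sum_{t \in B^{\ast},\ t \le T} k t^{k-1}$, where $B^{\ast} := \{t : t^k \in B\}$ and $T := (W_1 N)^{1/k}$. The density hypothesis $|B \cap [X]| \ge \delta_B X^{1/k}$ translates to $|B^{\ast} \cap [T']| \ge \delta_B T'$ for all large $T'$. An Abel summation argument, together with the extremal observation that the worst case is when $B^{\ast}$ is packed into the initial segment $[1, \delta_B T]$, then yields $\sum_{t \in B^{\ast},\ t \le T} k t^{k-1} \ge (1 - o(1))\, \delta_B^{k} T^{k}$.

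The crucial step is to pass from this bound on all of $B^{\ast}$ to the contribution of $t$ coprime to $W_1$. Here I would invoke the Euler product identity $\mathcal{Z}_k = \prod_p (1 + 1/|Z(p^k)|)$ together with the asymptotic $|\mathbb{Z}_{W_1}^{(k)}|/|Z(W_1)| \to \mathcal{Z}_k$ as $w \to \infty$. A local-to-global calculation shows that the worst-case loss when sieving $B^{\ast}$ for coprimality contributes a weighted deficit of at most $(\mathcal{Z}_k - 1)(1 - \delta_B^{k})$ relative to the full sum. Subtracting this deficit from the density lower bound $\delta_B^{k}$ and renormalizing by the factor $W_1/\varphi(W_1)$ implicit in the Euler product produces precisely the claimed quantity $\mathcal{Z}_k \delta_B^{k} - \mathcal{Z}_k + 1$, with all asymptotic errors absorbed into the prefactor $(1 - \epsilon)$ by taking $N$ (and hence $w$) large enough.

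The main obstacle is in this last step: the extremal $B^{\ast}$ for the unrestricted density bound (packed at small values) is in tension with the extremal $B^{\ast}$ for the coprime sieve (concentrated in non-coprime residues), and one must reconcile these two regimes. The Euler product structure of $\mathcal{Z}_k$ is what allows the local-at-$p$ corrections to be bundled into a single constant interacting cleanly with $\delta_B^{k}$, which is the technical heart of Salmensuu's argument.
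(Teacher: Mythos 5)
The paper gives no proof here: Lemma \ref{lem 3.3} is a verbatim citation of \cite[Lemma 6.2]{Sal}. The place to compare your sketch against the actual argument is the paper's own $W_2$-analogue, namely Lemma \ref{lem 4.3} together with Lemma \ref{lem 4.4}, which carry out precisely the Salmensuu calculation in the case $k\in\{4,8,9\}$.

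Your first two steps are sound. The fiber-count identity $\sigma_1(b)=\varphi(W_1)/|Z(W_1)|$ for $b\in Z(W_1)$ is correct (the $k$-th power map on $(\mathbb{Z}/W_1)^\times$ is a homomorphism, so its fibers over the image are equal-sized), and the unfolding to
$\frac{1}{\varphi(W_1)N}\sum_{t^k\in B,\ (t,W_1)=1,\ t\le T}kt^{k-1}$
with $T=(W_1N)^{1/k}$ is the right start. The worst-case packing lower bound $\sum_{t^k\in B,\ t\le T}kt^{k-1}\geq(1-o(1))\delta_B^k T^k$ is also correct.

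The gap is in your third step. You propose to pass from the unrestricted $t$-sum to the coprime $t$-sum by subtracting a deficit, claiming the deficit is at most $(\mathcal{Z}_k-1)(1-\delta_B^k)$. The algebraic identity $\delta_B^k-(\mathcal{Z}_k-1)(1-\delta_B^k)=\mathcal{Z}_k\delta_B^k-\mathcal{Z}_k+1$ is indeed what is wanted, but the subtraction does not make sense in the form you have set it up. After dividing by $\varphi(W_1)N$, the unrestricted $t$-sum is $\sim \delta_B^k\cdot W_1/\varphi(W_1)$, and the trivial bound on the non-coprime $t$-sum is $\sim W_1/\varphi(W_1)-1$; since $W_1/\varphi(W_1)=\prod_{p\le w}p/(p-1)\to\infty$ as $w\to\infty$, both pieces diverge and their difference is not controllable this way. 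The convergent constant $\mathcal{Z}_k$ is not $W_1/\varphi(W_1)$, and the Euler product $\prod_p(1+1/|Z(p^k)|)$ cannot be summoned by a coprimality sieve in $t$-space alone.

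What actually makes the argument work (and what Lemma \ref{lem 4.4} does explicitly for $W_2$) is to decompose by residue class $b\bmod W_1$ over \emph{all} $k$-th power residues $b\in\mathbb{Z}_{W_1}^{(k)}$, with the weight $1/\sigma(b)$ kept in place; this is essential because for non-coprime $b$ the fiber size $\sigma(b)$ can be much larger than $\varphi(W_1)/|Z(W_1)|$, and that normalisation is precisely what damps the non-coprime contributions back down to $O(1)$ per class. One then needs two separate inputs: the averaged lower bound $\mathbb{E}_{b\in\mathbb{Z}_{W_1}^{(k)}}g_2(b,N)\ge(1-\epsilon)\delta_B^k$ (Salmensuu's Lemma 6.1, the analogue of Lemma \ref{lem 4.3}), and the uniform pointwise bound $g_2(b,N)\le 1+o(1)$ coming from the majorant $\nu_b''$. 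With these, writing
$\mathbb{E}_{b\in Z(W_1)}g_2=\frac{|\mathbb{Z}_{W_1}^{(k)}|}{|Z(W_1)|}\mathbb{E}_{b\in\mathbb{Z}_{W_1}^{(k)}}g_2-\frac{1}{|Z(W_1)|}\sum_{b\in\mathbb{Z}_{W_1}^{(k)},(b,W_1)>1}g_2$,
and using $|\mathbb{Z}_{W_1}^{(k)}|/|Z(W_1)|\to\mathcal{Z}_k$, the second term is bounded by $(1+o(1))(\mathcal{Z}_k-1)$ and the result follows. Your sketch as written omits both the $\sigma(b)$ normalisation for non-coprime classes and the majorant upper bound, which are the two ingredients that make the sieve loss finite.
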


\begin{lemma}\label{lem 3.4}
	Let $ \epsilon \in (0,1) $. Then
	\begin{center}
		$ \mathbb{E}_{b \in Z(W_{1})}\mathbf{g}(b,N) \geq (1-\epsilon)\delta $
	\end{center}
	provided that $ N $ is large enough depending on $ \epsilon $.	
\end{lemma}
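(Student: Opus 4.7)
The plan is to compute $\mathbb{E}_{b \in Z(W_1)} \mathbf{g}(b, N)$ asymptotically by unfolding the definitions, reducing the expression to a weighted sum over primes in $\mathcal{P}$, and then invoking $\pi_{\mathcal{P}}(x) \sim \delta\pi(x)$ via partial summation. The argument is a direct adaptation of the already noted identity $\mathbb{E}_{n \in [N]} \nu'_b(n) \sim 1$: passing from the set of all primes to the subset $\mathcal{P}$ simply introduces the factor $\delta$.

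First I would unfold:
\begin{center}
$\mathbb{E}_{b \in Z(W_1)} \mathbf{g}(b, N) = \dfrac{\varphi(W_1)}{|Z(W_1)|\, W_1 N}\sum_{\substack{p \in \mathcal{P} \\ w < p \leq X}} \dfrac{k p^{k-1}\log p}{\sigma_1(p^k \bmod W_1)} + o(1),$
\end{center}
where $X := \lfloor (W_1(N+1))^{1/k} \rfloor$. Here, each prime $p \in \mathcal{P}$ with $w < p \leq X$ (so automatically $(p, W_1) = 1$) contributes to exactly one pair $(b, n) \in Z(W_1) \times [N]$, namely $b = p^k \bmod W_1$ and $n = (p^k - b)/W_1$; primes $p \leq w$ and boundary effects contribute only an $o(1)$ error since $w = \log\log\log n_0$ and $W_1 = o(\log N)$.

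Next, I would use the group-theoretic observation that the $k$-th power map on the finite abelian group $(\mathbb{Z}/W_1\mathbb{Z})^*$ is a homomorphism, so every nonempty fiber has the same cardinality $K := |\ker|$. Consequently $\sigma_1(b) = K$ for every $b \in Z(W_1)$ and $|Z(W_1)| = \varphi(W_1)/K$. The constants cancel, giving
\begin{center}
$\mathbb{E}_{b \in Z(W_1)} \mathbf{g}(b, N) = \dfrac{1}{W_1 N}\sum_{\substack{p \in \mathcal{P} \\ w < p \leq X}} kp^{k-1}\log p + o(1).$
\end{center}

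Finally, from $\pi_{\mathcal{P}}(x) \sim \delta\pi(x)$ and the prime number theorem we have $\sum_{p \in \mathcal{P},\, p \leq X} \log p \sim \delta X$, and partial summation upgrades this to $\sum_{p \in \mathcal{P},\, p \leq X} k p^{k-1}\log p \sim \delta X^k$. Since $X^k \sim W_1 N$, we conclude $\mathbb{E}_{b \in Z(W_1)} \mathbf{g}(b, N) \to \delta$, and the bound $(1-\epsilon)\delta$ follows for $N$ sufficiently large depending on $\epsilon$. The only substantive step is the constancy of $\sigma_1$ on $Z(W_1)$; everything else mirrors the proof that $\mathbb{E}_{n \in [N]} \nu'_b(n) \sim 1$, so the argument is essentially routine. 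In particular, unlike Lemma \ref{lem 3.2}, no factor of $k$ is lost, because the asymptotic density $\delta$ allows us to evaluate the prime sum directly rather than bounding the exceptional set trivially.
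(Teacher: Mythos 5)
Your argument is correct and is the natural one; it almost certainly matches what the cited proof of \cite[Lemma 4.8]{Gao} does. The three ingredients you use are exactly right: unfolding the double average into a single sum over primes $p\in\mathcal{P}$ with $p^k\le W_1(N+1)$, the group-theoretic fact that the $k$-th power map on $(\mathbb{Z}/W_1\mathbb{Z})^*$ has constant fibre size $K$ on its image (so $\sigma_1(b)=K$ for every $b\in Z(W_1)$ and $|Z(W_1)|=\varphi(W_1)/K$), and partial summation from $\pi_{\mathcal{P}}(x)\sim\delta\pi(x)$ to get $\sum_{p\in\mathcal{P},\,p\le X}kp^{k-1}\log p\sim\delta X^k$ with $X^k\sim W_1N$. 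One small imprecision worth tightening: the primes that actually contribute satisfy $p^k>W_1$, i.e.\ $p>W_1^{1/k}$, not merely $p>w$; the range $w<p\le W_1^{1/k}$ you include by mistake still only contributes $O(W_1)/(W_1N)=o(1)$, so your error term covers it, but the correct lower cut-off is $W_1^{1/k}$. I would also slightly rephrase the framing sentence ``passing from $\mathbb{P}$ to $\mathcal{P}$ simply introduces the factor $\delta$'': the proof of $\mathbb{E}_{n\in[N]}\nu'_b(n)\sim 1$ for \emph{fixed} $b$ relies on Siegel--Walfisz to equidistribute primes among the $\sigma_1(b)$ residue classes, which one does \emph{not} have for a general $\mathcal{P}$; your proof sidesteps this precisely because averaging over $b\in Z(W_1)$ removes the residue-class restriction and lets the bare hypothesis $\pi_{\mathcal{P}}(x)\sim\delta\pi(x)$ suffice. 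That averaging step is the substantive point, and your closing remark that no factor of $k$ is lost (in contrast with Lemma~\ref{lem 3.2}, where only a lower density for $A$ is available) is exactly the right observation.
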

\begin{proof}
	See \cite[proof of Lemma 4.8]{Gao}.
\end{proof}

\begin{proposition}\label{pro 3.5}
	Let $ \epsilon \in (0,1/4) $ and let $ N $ be sufficiently large depending on $ \epsilon $. Let $ \delta_{A}>1-1/2k+2\epsilon/k,\  k\delta_{A}+\mathcal{Z}_{k}\delta_{B}^{k}>\mathcal{Z}_{k}+k-1+4\epsilon $ and $ s_{2}\geq 1 ,\ s_{1}\geq 16k\omega(k)+4k+4+s_{2} $. Then, for all $ n\in \mathbb{Z}_{W_{1}} $ with $  n \equiv s_{1}+s_{2} \ (\bmod \ R_{k}) $, there exist $ b_{1},\ldots,b_{s_{1}},b_{s_{1}+1},\ldots,b_{s_{1}+s_{2}} \in Z(W_{1}) $ such that: \\
	(i) $ n\equiv b_{1}+\cdots+b_{s_{1}}+b_{s_{1}+1}+\cdots+b_{s_{1}+s_{2}} (\bmod\ W_{1})$ ;\\
	(ii) $\ g_{1}(b_{i},N)>\epsilon /2 $ for all $ i \in \{1,\ldots,s_{1}\} $ and $\ g_{2}(b_{j},N)>\epsilon/2 $ for all $ j \in \{s_{1}+1,\ldots,s_{1}+s_{2}\} $;\\
	(iii)
	$ g_{1}(b_{1},N)+\cdots+g_{1}(b_{s_{1}},N)+g_{2}(b_{s_{1}+1},N)+\cdots+g_{2}(b_{s_{1}+s_{2}},N)>(s_{1}+s_{2})(1+\epsilon)/2. $
\end{proposition}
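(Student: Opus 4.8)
\emph{Plan.} The idea is to deduce Proposition~\ref{pro 3.5} from the additive-combinatorial Lemma~\ref{lem 3.1}, applied to the normalized functions $h_1=f_1$ and $h_2=f_2$ introduced in Subsection~3.1, once the density hypotheses on $\delta_A$ and $\delta_B$ have been converted, via Lemmas~\ref{lem 3.2} and~\ref{lem 3.3}, into the two mean conditions demanded by Lemma~\ref{lem 3.1}.

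First I would note that for $N$ large $f_1,f_2\colon Z(W_1)\to[0,1)$ (as already observed in Subsection~3.1), so they are admissible inputs to Lemma~\ref{lem 3.1}, and that pointwise $f_i(b)\ge \frac{1}{1+\epsilon}\bigl(g_i(b,N)-\epsilon/2\bigr)$ since $\max(0,\cdot)\ge(\cdot)$. Averaging over $b\in Z(W_1)$ and inserting Lemma~\ref{lem 3.2} gives $\mathbb{E}_{b\in Z(W_1)}f_1(b)\ge \frac{1}{1+\epsilon}\bigl(k\delta_A-(k-1)-\tfrac32\epsilon\bigr)$, and a one-line computation shows this exceeds $1/2$ exactly when $\delta_A>1-\tfrac{1}{2k}+\tfrac{2\epsilon}{k}$, i.e.\ under the first hypothesis. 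For the joint mean, set $X:=\mathcal{Z}_k\delta_B^{k}-\mathcal{Z}_k+1\ (\le 1)$; adding Lemmas~\ref{lem 3.2} and~\ref{lem 3.3} when $X\ge 0$, and using $g_2\ge 0$ (hence $\mathbb{E}_b g_2\ge 0$) when $X<0$, one gets $\mathbb{E}_b g_1(b,N)+\mathbb{E}_b g_2(b,N)\ge k\delta_A-(k-1)+X-2\epsilon$ in all cases. The second hypothesis $k\delta_A+\mathcal{Z}_k\delta_B^k>\mathcal{Z}_k+k-1+4\epsilon$ rearranges to $k\delta_A-(k-1)+X>1+4\epsilon$, whence $\mathbb{E}_b g_1+\mathbb{E}_b g_2>1+2\epsilon$ and therefore $\mathbb{E}_b\bigl(f_1(b)+f_2(b)\bigr)\ge\frac{1}{1+\epsilon}\bigl(\mathbb{E}_b g_1+\mathbb{E}_b g_2-\epsilon\bigr)>1$.

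Next I would apply Lemma~\ref{lem 3.1} with $h_1=f_1$, $h_2=f_2$ (its remaining hypotheses $s_2\ge 1$ and $s_1\ge 16k\omega(k)+4k+4+s_2$ are exactly those assumed in the Proposition), obtaining $b_1,\dots,b_{s_1+s_2}\in Z(W_1)$ with $n\equiv b_1+\cdots+b_{s_1+s_2}\ (\bmod\ W_1)$, with $f_1(b_i)>0$ for $i\le s_1$ and $f_2(b_j)>0$ for $j>s_1$, and with $\sum_{i\le s_1}f_1(b_i)+\sum_{j>s_1}f_2(b_j)>(s_1+s_2)/2$. Finally I would undo the normalization: $f_i(b)>0$ forces $f_i(b)=\frac{1}{1+\epsilon}\bigl(g_i(b,N)-\epsilon/2\bigr)$, which in particular gives $g_i(b,N)>\epsilon/2$, i.e.\ (ii); substituting these identities into the last inequality and clearing the factor $\frac{1}{1+\epsilon}$ yields $\sum_{i\le s_1}g_1(b_i,N)+\sum_{j>s_1}g_2(b_j,N)>(s_1+s_2)\tfrac{1+2\epsilon}{2}>(s_1+s_2)\tfrac{1+\epsilon}{2}$, which is (iii); condition (i) is immediate.

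I do not expect a genuine obstacle: the proof is essentially bookkeeping, tracking the parameter $\epsilon$ through the definition $f_i(b)=\max\bigl(0,\tfrac{1}{1+\epsilon}(g_i(b,N)-\epsilon/2)\bigr)$ and the $(1+\epsilon)$-rescaling so that every strict inequality survives. The only step needing a little attention is the regime of small $\delta_B$ (equivalently $X<0$), where Lemma~\ref{lem 3.3} yields a negative, hence useless, lower bound and one must fall back on $\mathbb{E}_b g_2(b,N)\ge 0$; since $\delta_A\le 1$ then forces $\mathcal{Z}_k\delta_B^k$ to sit close to $\mathcal{Z}_k-1$, the second hypothesis still pushes the joint mean past $1+2\epsilon$, so the argument goes through unchanged.
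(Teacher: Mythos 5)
Your proposal is correct and follows essentially the same route as the paper: apply Lemma~\ref{lem 3.1} to $h_1=f_1$, $h_2=f_2$, verify the two mean hypotheses from Lemmas~\ref{lem 3.2}--\ref{lem 3.3} using the given density conditions, then unwind the definition of $f_i$ to recover (ii) and (iii) for $g_1,g_2$. The case split on the sign of $X=\mathcal{Z}_k\delta_B^k-\mathcal{Z}_k+1$ is unnecessary (since $(1-\epsilon)X\ge X-\epsilon$ holds for every $X\le 1$ regardless of sign, which is all the paper uses), but it is harmless and the rest of the bookkeeping matches the paper's calculation exactly.
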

\begin{proof}
	Note that $ \delta_{A}>1-1/2k+2\epsilon/k $ is equivalent to $ k\delta_{A}-(k-1)-\epsilon>1/2+\epsilon $.
By Lemma \ref{lem 3.2} , we have \begin{center}
	$ \mathbb{E}_{b \in Z(W_{1})}f_{1}(b)\geq \dfrac{1}{1+\epsilon}\mathbb{E}_{b \in Z(W_{1})}\big(g_{1}(b,N)-\epsilon/2\big)>1/2 $.
\end{center}  	
Note that $ k\delta_{A}+\mathcal{Z}_{k}\delta_{B}^{k}>\mathcal{Z}_{k}+k-1+4\epsilon $ is equivalent to $ k\delta_{A}-(k-1)-\epsilon+\mathcal{Z}_{k}\delta_{B}^{k}-\mathcal{Z}_{k}+1-\epsilon>1+2\epsilon $. Therefore, by Lemma \ref{lem 3.2} and Lemma \ref{lem 3.3}, we have
\begin{center}
	$\begin{aligned} \mathbb{E}_{b \in Z(W_{1})}(f_{1}(b)+f_{2}(b))
	&\geq \dfrac{1}{1+\epsilon}\mathbb{E}_{b \in Z(W_{1})}\big(g_{1}(b,N)+g_{2}(b,N)-\epsilon\big)\\
	&\geq \dfrac{(k\delta_{A}-(k-1)-\epsilon)+(\mathcal{Z}_{k}\delta_{B}^{k}-\mathcal{Z}_{k}+1-\epsilon)-\epsilon}{1+\epsilon}\\
	&>1.
	\end{aligned} $
\end{center}	
Therefore, by Lemma \ref{lem 3.1}, for all $ n \in \mathbb{Z}_{W_{1}} $ with $  n \equiv s_{1}+s_{2} \ (\bmod \ R_{k}) $, there exist $ b_{1},\ldots,b_{s_{1}},b_{s_{1}+1},\ldots,b_{s_{1}+s_{2}} \in Z(W_{1}) $ such that: \\
(i) $ n\equiv b_{1}+\cdots+b_{s_{1}}+b_{s_{1}+1}+\cdots+b_{s_{1}+s_{2}} (\bmod\ W)$ ;\\
(ii) $\ f_{1}(b_{i})>0 $ for all $ i \in \{1,\ldots,s_{1}\} $ and $\ f_{2}(b_{j})>0 $ for all $ j \in \{s_{1}+1,\ldots,s_{1}+s_{2}\} $;\\
(iii)
$ f_{1}(b_{1})+\cdots+f_{1}(b_{s_{1}})+f_{2}(b_{s_{1}+1})+\cdots+f_{2}(b_{s_{1}+s_{2}})>(s_{1}+s_{2})/2. $	

By definitions of $ f_{1} $ and $ f_{2} $, we have $\ g_{1}(b_{i},N)>\epsilon /2 $ for all $ i \in \{1,\ldots,s_{1}\} $ and $\ g_{2}(b_{j},N)>\epsilon/2 $ for all $ j \in \{s_{1}+1,\ldots,s_{1}+s_{2}\} $. We also have\begin{center}
	$\begin{aligned}
	 g_{1}(b_{1},N)+\cdots+g_{1}(b_{s_{1}},N)+g_{2}(b_{s_{1}+1},N)+\cdots+g_{2}(b_{s_{1}+s_{2}},N)&>\dfrac{(1+\epsilon)(s_{1}+s_{2})}{2}+\dfrac{\epsilon(s_{1}+s_{2})}{2}\\
	 &>\dfrac{(1+\epsilon)(s_{1}+s_{2})}{2}. \end{aligned}$
\end{center}	
\end{proof}
\begin{proposition}\label{pro 3.6}
	Let $ \epsilon \in (0,1/6) $ and let $ N $ be sufficiently large depending on $ \epsilon $. Let $ \delta_{B}>(1-(1/2-3\epsilon)\mathcal{Z}_{k}^{-1})^{1/k},\  k\delta_{A}+\mathcal{Z}_{k}\delta_{B}^{k}>\mathcal{Z}_{k}+k-1+4\epsilon $ and $ s_{2}\geq 1 ,\ s_{1}\geq 16k\omega(k)+4k+4+s_{2} $. Then, for all $ n\in \mathbb{Z}_{W_{1}} $ with $  n \equiv s_{1}+s_{2} \ (\bmod \ R_{k}) $, there exist $ b_{1},\ldots,b_{s_{1}},b_{s_{1}+1},\ldots,b_{s_{1}+s_{2}} \in Z(W_{1}) $ such that: \\
	(i) $ n\equiv b_{1}+\cdots+b_{s_{1}}+b_{s_{1}+1}+\cdots+b_{s_{1}+s_{2}} (\bmod\ W_{1})$ ;\\
	(ii) $\ g_{2}(b_{i},N)>\epsilon /2 $ for all $ i \in \{1,\ldots,s_{1}\} $ and $\ g_{1}(b_{j},N)>\epsilon/2 $ for all $ j \in \{s_{1}+1,\ldots,s_{1}+s_{2}\} $;\\
	(iii)
	$ g_{2}(b_{1},N)+\cdots+g_{2}(b_{s_{1}},N)+g_{1}(b_{s_{1}+1},N)+\cdots+g_{1}(b_{s_{1}+s_{2}},N)>(s_{1}+s_{2})(1+\epsilon)/2. $
\end{proposition}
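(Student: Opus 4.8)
The plan is to mirror the proof of Proposition \ref{pro 3.5} with the roles of the two weight systems swapped, so that $g_2$ plays the part of the ``$s_1$-fold'' function and $g_1$ the part of the ``$s_2$-fold'' function. First I would observe that the hypothesis $\delta_B>(1-(1/2-3\epsilon)\mathcal{Z}_k^{-1})^{1/k}$ is equivalent to $\mathcal{Z}_k\delta_B^k-\mathcal{Z}_k+1>1/2+3\epsilon$, hence by Lemma \ref{lem 3.3},
\begin{center}
$\mathbb{E}_{b\in Z(W_1)}f_2(b)\geq \dfrac{1}{1+\epsilon}\mathbb{E}_{b\in Z(W_1)}\big(g_2(b,N)-\epsilon/2\big)\geq\dfrac{(1-\epsilon)(\mathcal{Z}_k\delta_B^k-\mathcal{Z}_k+1)-\epsilon/2}{1+\epsilon}>1/2,$
\end{center}
where the last inequality needs the elementary check that $(1-\epsilon)(1/2+3\epsilon)-\epsilon/2>1/2$ for $\epsilon\in(0,1/6)$ (which holds since the left side equals $1/2+2\epsilon-3\epsilon^2>1/2$). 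So the first mean hypothesis of Lemma \ref{lem 3.1}, applied with $h_1=f_2$ and $h_2=f_1$, is satisfied.

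Next I would verify the second mean condition. Exactly as in Proposition \ref{pro 3.5}, the hypothesis $k\delta_A+\mathcal{Z}_k\delta_B^k>\mathcal{Z}_k+k-1+4\epsilon$ rewrites as $(k\delta_A-(k-1)-\epsilon)+(\mathcal{Z}_k\delta_B^k-\mathcal{Z}_k+1-\epsilon)-\epsilon>1+2\epsilon$, so combining Lemma \ref{lem 3.2} and Lemma \ref{lem 3.3} gives
\begin{center}
$\begin{aligned}\mathbb{E}_{b\in Z(W_1)}(f_2(b)+f_1(b))&\geq\dfrac{1}{1+\epsilon}\mathbb{E}_{b\in Z(W_1)}\big(g_1(b,N)+g_2(b,N)-\epsilon\big)\\&\geq\dfrac{(k\delta_A-(k-1)-\epsilon)+(1-\epsilon)(\mathcal{Z}_k\delta_B^k-\mathcal{Z}_k+1)-\epsilon}{1+\epsilon}\\&>1,\end{aligned}$
\end{center}
after absorbing the harmless factor $(1-\epsilon)$ into the slack (the bound on $\delta_B$ provides enough room, or one simply notes $g_2(b,N)\geq 0$ and re-derives the estimate using $\epsilon$ small). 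Then I would invoke Lemma \ref{lem 3.1} with $(h_1,h_2)=(f_2,f_1)$ and $s_1,s_2$ as given (the condition $s_1\geq 16k\omega(k)+4k+4+s_2$ is precisely what that lemma requires), obtaining $b_1,\dots,b_{s_1+s_2}\in Z(W_1)$ with the congruence in (i), with $f_2(b_i)>0$ for $i\leq s_1$ and $f_1(b_j)>0$ for $j>s_1$, and with $f_2(b_1)+\cdots+f_2(b_{s_1})+f_1(b_{s_1+1})+\cdots+f_1(b_{s_1+s_2})>(s_1+s_2)/2$.

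Finally I would unwind the definitions $f_i(b)=\max(0,\frac{1}{1+\epsilon}(g_i(b,N)-\epsilon/2))$: positivity of $f_2(b_i)$ forces $g_2(b_i,N)>\epsilon/2$ and likewise $g_1(b_j,N)>\epsilon/2$, giving (ii); and from $f_i(b)\leq\frac{1}{1+\epsilon}(g_i(b,N)-\epsilon/2)$ one gets $g_i(b,N)\geq(1+\epsilon)f_i(b)+\epsilon/2$, so summing over the $s_1+s_2$ chosen blocks yields
\begin{center}
$g_2(b_1,N)+\cdots+g_2(b_{s_1},N)+g_1(b_{s_1+1},N)+\cdots+g_1(b_{s_1+s_2},N)>(1+\epsilon)\dfrac{s_1+s_2}{2}+\dfrac{\epsilon(s_1+s_2)}{2}>\dfrac{(1+\epsilon)(s_1+s_2)}{2},$
\end{center}
which is (iii). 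The only genuinely delicate point is bookkeeping the $\epsilon$'s in the two mean inequalities so that the strict inequalities $>1/2$ and $>1$ survive; everything else is a direct transcription of Proposition \ref{pro 3.5} with $f_1\leftrightarrow f_2$, using that the symmetric hypothesis $k\delta_A+\mathcal{Z}_k\delta_B^k>\mathcal{Z}_k+k-1$ treats the two weights on an equal footing. I expect no substantial obstacle beyond this constant-chasing.
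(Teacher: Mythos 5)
Your proof follows essentially the same route as the paper: swap the roles of $f_1$ and $f_2$, show $\mathbb{E}_{b\in Z(W_1)}f_2(b)>1/2$ from the new $\delta_B$ hypothesis via Lemma \ref{lem 3.3}, re-derive $\mathbb{E}_{b\in Z(W_1)}(f_1(b)+f_2(b))>1$ as in Proposition \ref{pro 3.5}, and invoke Lemma \ref{lem 3.1} with $(h_1,h_2)=(f_2,f_1)$. One minor slip: since the display divides by $1+\epsilon$, the elementary check you need is $(1-\epsilon)(1/2+3\epsilon)-\epsilon/2>(1+\epsilon)/2$ rather than $>1/2$; this amounts to $3\epsilon/2>3\epsilon^2$, which indeed holds for $\epsilon<1/2$, so the conclusion stands.
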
	
\begin{proof}
Note that $ \delta_{B}>(1-(1/2-3\epsilon)\mathcal{Z}_{k}^{-1})^{1/k} $ is equivalent to $ \mathcal{Z}_{k}\delta_{B}^{k}-\mathcal{Z}_{k}+1>1/2+3\epsilon $. Therefore, by Lemma \ref{lem 3.3}, we have \begin{center}
	$ \mathbb{E}_{b \in Z(W_{1})}g_{2}(b,N) > (1-\epsilon)(1/2+3\epsilon)>1/2+2\epsilon $
\end{center}
provided that $ N $	is large enough depending on $ \epsilon $.

Therefore, \begin{center}
	$ \mathbb{E}_{b \in Z(W_{1})}f_{2}(b)\geq \dfrac{1}{1+\epsilon}\mathbb{E}_{b \in Z(W_{1})}\big(g_{2}(b,N)-\epsilon/2\big)>1/2 $.
\end{center}
Similarly, using another density condition, we have $ \mathbb{E}_{b \in Z(W_{1})}(f_{1}(b)+f_{2}(b))>1 $. By Lemma \ref{lem 3.1}, repeating the similar arguments in the proof of Lemma \ref{pro 3.5}, we can get the conclusions.  
\end{proof}

\begin{proposition}\label{pro 3.7}
	Let $ \epsilon \in (0,1/6) $ and let $ N $ be sufficiently large depending on $ \epsilon $. Let $ \delta>1/2+3\epsilon,\  \delta+\mathcal{Z}_{k}\delta_{B}^{k}>\mathcal{Z}_{k}+4\epsilon $ and $ s_{2}\geq 1 ,\ s_{1}\geq 16k\omega(k)+4k+4+s_{2} $. Then, for all $ n\in \mathbb{Z}_{W_{1}} $ with $  n \equiv s_{1}+s_{2} \ (\bmod \ R_{k}) $, there exist $ b_{1},\ldots,b_{s_{1}},b_{s_{1}+1},\ldots,b_{s_{1}+s_{2}} \in Z(W_{1}) $ such that: \\
	(i) $ n\equiv b_{1}+\cdots+b_{s_{1}}+b_{s_{1}+1}+\cdots+b_{s_{1}+s_{2}} (\bmod\ W_{1})$ ;\\
	(ii) $\ \mathbf{g}(b_{i},N)>\epsilon /2 $ for all $ i \in \{1,\ldots,s_{1}\} $ and $\ g_{2}(b_{j},N)>\epsilon/2 $ for all $ j \in \{s_{1}+1,\ldots,s_{1}+s_{2}\} $;\\
	(iii)
	$ \mathbf{g}(b_{1},N)+\cdots+\mathbf{g}(b_{s_{1}},N)+g_{2}(b_{s_{1}+1},N)+\cdots+g_{2}(b_{s_{1}+s_{2}},N)>(s_{1}+s_{2})(1+\epsilon)/2. $
\end{proposition}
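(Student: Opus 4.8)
The plan is to mirror the proof strategy of Proposition~\ref{pro 3.5}, substituting the density inputs for $\mathbf{f}_b$ (i.e.\ the function $\mathbf{g}$) wherever the argument for $A$ used $g_1$. First I would observe that $\delta>1/2+3\epsilon$ together with Lemma~\ref{lem 3.4} gives
\begin{center}
$\mathbb{E}_{b \in Z(W_{1})}\mathbf{g}(b,N)\geq (1-\epsilon)\delta>(1-\epsilon)(1/2+3\epsilon)>1/2+2\epsilon$
\end{center}
once $N$ is large enough, so that by the definition of $\mathbf{f}$ we get $\mathbb{E}_{b \in Z(W_{1})}\mathbf{f}(b)\geq \frac{1}{1+\epsilon}\mathbb{E}_{b \in Z(W_{1})}(\mathbf{g}(b,N)-\epsilon/2)>1/2$.

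Next I would handle the joint mean condition. The hypothesis $\delta+\mathcal{Z}_{k}\delta_{B}^{k}>\mathcal{Z}_{k}+4\epsilon$ rearranges to $(\delta-\epsilon)+(\mathcal{Z}_{k}\delta_{B}^{k}-\mathcal{Z}_{k}+1-\epsilon)-\epsilon>1+2\epsilon$ after noting $\delta\leq 1$ forces $\delta-\epsilon$ to be the relevant lower-bound piece; combining Lemma~\ref{lem 3.4} with Lemma~\ref{lem 3.3} then yields
\begin{center}
$\mathbb{E}_{b \in Z(W_{1})}(\mathbf{f}(b)+f_{2}(b))\geq \dfrac{(1-\epsilon)\delta+(1-\epsilon)(\mathcal{Z}_{k}\delta_{B}^{k}-\mathcal{Z}_{k}+1)-\epsilon}{1+\epsilon}>1$.
\end{center}
Both $\mathbf{f}$ and $f_2$ take values in $[0,1)$ (the first because $\mathbf{f}_b\leq \nu'_b$ and $\mathbb{E}\nu'_b\sim 1$, the second as already noted in the Definitions subsection), so Lemma~\ref{lem 3.1} applies with $h_1=\mathbf{f}$, $h_2=f_2$.

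Finally I would invoke Lemma~\ref{lem 3.1} to obtain $b_1,\dots,b_{s_1+s_2}\in Z(W_1)$ with $n\equiv b_1+\cdots+b_{s_1+s_2}\ (\bmod\ W_1)$, with $\mathbf{f}(b_i)>0$ for $i\leq s_1$ and $f_2(b_j)>0$ for $j>s_1$, and with $\sum_{i\leq s_1}\mathbf{f}(b_i)+\sum_{j>s_1}f_2(b_j)>(s_1+s_2)/2$. Unwinding the definitions of $\mathbf{f}$ and $f_2$ (exactly as at the end of the proof of Proposition~\ref{pro 3.5}) converts $\mathbf{f}(b_i)>0$ into $\mathbf{g}(b_i,N)>\epsilon/2$, converts $f_2(b_j)>0$ into $g_2(b_j,N)>\epsilon/2$, and converts the sum bound into $\sum_{i\leq s_1}\mathbf{g}(b_i,N)+\sum_{j>s_1}g_2(b_j,N)>(1+\epsilon)(s_1+s_2)/2+\epsilon(s_1+s_2)/2>(s_1+s_2)(1+\epsilon)/2$, which is exactly (iii). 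I do not expect any genuine obstacle here: the only point requiring a little care is checking that the arithmetic rearrangements of the two density hypotheses line up with the $\epsilon$-losses coming from Lemmas~\ref{lem 3.3} and~\ref{lem 3.4}, and that $\epsilon<1/6$ is enough slack for the strict inequalities; everything else is a verbatim repetition of the Proposition~\ref{pro 3.5} argument with $g_1$ replaced by $\mathbf{g}$.
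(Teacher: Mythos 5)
Your argument is correct and coincides with the paper's own proof: Lemma~\ref{lem 3.4} together with $\delta>1/2+3\epsilon$ gives $\mathbb{E}_{b\in Z(W_1)}\mathbf{f}(b)>1/2$, Lemmas~\ref{lem 3.3} and~\ref{lem 3.4} combined with $\delta+\mathcal{Z}_k\delta_B^k>\mathcal{Z}_k+4\epsilon$ give $\mathbb{E}_{b\in Z(W_1)}(\mathbf{f}(b)+f_2(b))>1$, and Lemma~\ref{lem 3.1} then yields the three conclusions after unwinding $\mathbf{f}$ and $f_2$. One small arithmetic slip in a side remark: the hypothesis rearranges to $(\delta-\epsilon)+(\mathcal{Z}_k\delta_B^k-\mathcal{Z}_k+1-\epsilon)-\epsilon>1+\epsilon$, not $>1+2\epsilon$ as you wrote, but this does not affect your subsequent displayed estimate, whose numerator exceeds $(1-\epsilon)(1+4\epsilon)-\epsilon=1+2\epsilon-4\epsilon^2>1+\epsilon$ for $\epsilon<1/4$, so the conclusion $>1$ still holds.
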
	
\begin{proof}
By Lemma \ref{lem 3.4}, \begin{center}
	$ \mathbb{E}_{b \in Z(W_{1})}\mathbf{f}(b)\geq \dfrac{1}{1+\epsilon}\mathbb{E}_{b \in Z(W_{1})}\big(\mathbf{g}(b,N)-\epsilon/2\big)>1/2 $.
\end{center}
Note that $ \delta+\mathcal{Z}_{k}\delta_{B}^{k}>\mathcal{Z}_{k}+4\epsilon $
is equivalent to $ \delta+\mathcal{Z}_{k}\delta_{B}^{k}-\mathcal{Z}_{k}+1>1+4\epsilon $. Therefore, by Lemma \ref{lem 3.3} and Lemma \ref{lem 3.4}, we have \begin{center}
	$\begin{aligned} \mathbb{E}_{b \in Z(W_{1})}(\mathbf{f}(b)+f_{2}(b))
	&\geq \dfrac{1}{1+\epsilon}\mathbb{E}_{b \in Z(W_{1})}\big(\mathbf{g}(b,N)+g_{2}(b,N)-\epsilon\big)\\
	&\geq \dfrac{(1-\epsilon)(\delta+\mathcal{Z}_{k}\delta_{B}^{k}-\mathcal{Z}_{k}+1)-\epsilon}{1+\epsilon}\\
	&>1.
	\end{aligned} $
\end{center}
By Lemma \ref{lem 3.1}, repeating the similar arguments in the proof of Proposition \ref{pro 3.5}, we can get the conclusions. 

\end{proof}
\begin{proposition}\label{pro 3.8}
	Let $ \epsilon \in (0,1/6) $ and let $ N $ be sufficiently large depending on $ \epsilon $. Let $ \delta_{B}>(1-(1/2-3\epsilon)\mathcal{Z}_{k}^{-1})^{1/k},\  \delta+\mathcal{Z}_{k}\delta_{B}^{k}>\mathcal{Z}_{k}+4\epsilon  $ and $ s_{2}\geq 1 ,\ s_{1}\geq 16k\omega(k)+4k+4+s_{2} $. Then, for all $ n\in \mathbb{Z}_{W_{1}} $ with $  n \equiv s_{1}+s_{2} \ (\bmod \ R_{k}) $, there exist $ b_{1},\ldots,b_{s_{1}},b_{s_{1}+1},\ldots,b_{s_{1}+s_{2}} \in Z(W_{1}) $ such that: \\
	(i) $ n\equiv b_{1}+\cdots+b_{s_{1}}+b_{s_{1}+1}+\cdots+b_{s_{1}+s_{2}} (\bmod\ W_{1})$ ;\\
	(ii) $\ g_{2}(b_{i},N)>\epsilon /2 $ for all $ i \in \{1,\ldots,s_{1}\} $ and $\ \mathbf{g}(b_{j},N)>\epsilon/2 $ for all $ j \in \{s_{1}+1,\ldots,s_{1}+s_{2}\} $;\\
	(iii)
	$ g_{2}(b_{1},N)+\cdots+g_{2}(b_{s_{1}},N)+\mathbf{g}(b_{s_{1}+1},N)+\cdots+\mathbf{g}(b_{s_{1}+s_{2}},N)>(s_{1}+s_{2})(1+\epsilon)/2. $
\end{proposition}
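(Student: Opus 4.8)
The plan is to run the same argument as in the proof of Proposition~\ref{pro 3.7}, but with the roles of the prime-power weight and the integer-power weight interchanged: apply Lemma~\ref{lem 3.1} with $h_1=f_2$ and $h_2=\mathbf{f}$. For this we first need both functions to map $Z(W_1)$ into $[0,1)$. For $f_2$ this is already recorded in Subsection~3.1, and for $\mathbf{f}$ it follows in the same way, since $\mathbf{f}_b\le\nu'_b$ and $\mathbb{E}_{n\in[N]}\nu'_b(n)\sim 1$ force $\mathbf{g}(b,N)<1+o(1)$, hence $\mathbf{f}(b)=\max\bigl(0,\tfrac{1}{1+\epsilon}(\mathbf{g}(b,N)-\epsilon/2)\bigr)\in[0,1)$ for $N$ large. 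So the codomain hypotheses of Lemma~\ref{lem 3.1} are met.

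Next I would verify the two mean conditions required by Lemma~\ref{lem 3.1}. The density hypothesis $\delta_B>(1-(1/2-3\epsilon)\mathcal{Z}_k^{-1})^{1/k}$ is equivalent to $\mathcal{Z}_k\delta_B^k-\mathcal{Z}_k+1>1/2+3\epsilon$, so Lemma~\ref{lem 3.3} gives $\mathbb{E}_{b\in Z(W_1)}g_2(b,N)\ge(1-\epsilon)(1/2+3\epsilon)>1/2+2\epsilon$ for $N$ large, whence $\mathbb{E}_{b\in Z(W_1)}f_2(b)\ge\frac{1}{1+\epsilon}\bigl(\mathbb{E}_{b\in Z(W_1)}g_2(b,N)-\epsilon/2\bigr)>1/2$ because $\epsilon<1/6$. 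Similarly, the hypothesis $\delta+\mathcal{Z}_k\delta_B^k>\mathcal{Z}_k+4\epsilon$ rearranges to $\delta+\mathcal{Z}_k\delta_B^k-\mathcal{Z}_k+1>1+4\epsilon$, so by Lemma~\ref{lem 3.3} and Lemma~\ref{lem 3.4} we get $\mathbb{E}_{b\in Z(W_1)}\bigl(g_2(b,N)+\mathbf{g}(b,N)\bigr)\ge(1-\epsilon)(\delta+\mathcal{Z}_k\delta_B^k-\mathcal{Z}_k+1)>(1-\epsilon)(1+4\epsilon)$, and therefore $\mathbb{E}_{b\in Z(W_1)}\bigl(f_2(b)+\mathbf{f}(b)\bigr)\ge\frac{1}{1+\epsilon}\bigl((1-\epsilon)(1+4\epsilon)-\epsilon\bigr)>1$ since $\epsilon<1/4$.

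Now invoking Lemma~\ref{lem 3.1} with $(h_1,h_2)=(f_2,\mathbf{f})$ and the given $s_1,s_2$ produces $b_1,\dots,b_{s_1+s_2}\in Z(W_1)$ with the congruence (i), with $f_2(b_i)>0$ for $i\le s_1$ and $\mathbf{f}(b_j)>0$ for $j>s_1$, and with $\sum_{i\le s_1}f_2(b_i)+\sum_{j>s_1}\mathbf{f}(b_j)>(s_1+s_2)/2$. Unwinding the definitions, $f_2(b_i)>0$ forces $g_2(b_i,N)>\epsilon/2$ and $\mathbf{f}(b_j)>0$ forces $\mathbf{g}(b_j,N)>\epsilon/2$, which is (ii); and on their supports $g_2(b,N)=(1+\epsilon)f_2(b)+\epsilon/2$ and $\mathbf{g}(b,N)=(1+\epsilon)\mathbf{f}(b)+\epsilon/2$, so summing yields $\sum_{i\le s_1}g_2(b_i,N)+\sum_{j>s_1}\mathbf{g}(b_j,N)>(1+\epsilon)(s_1+s_2)/2+\epsilon(s_1+s_2)/2>(1+\epsilon)(s_1+s_2)/2$, which is (iii). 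The argument involves no essential new difficulty beyond Propositions~\ref{pro 3.5}--\ref{pro 3.7}; the only point needing care is that the $3\epsilon$ and $4\epsilon$ margins built into the hypotheses survive the factor $\frac{1}{1+\epsilon}$ and the $(1-\epsilon)$ loss from Lemma~\ref{lem 3.3}, which holds for $\epsilon<1/6$, so the "hard part" is purely the bookkeeping of these slack terms.
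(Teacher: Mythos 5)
Your proposal is correct and follows essentially the same route as the paper's own (quite terse) proof: derive $\mathbb{E}_{b\in Z(W_1)}f_2(b)>1/2$ from the $\delta_B$ hypothesis via Lemma~\ref{lem 3.3}, derive $\mathbb{E}_{b\in Z(W_1)}(f_2(b)+\mathbf{f}(b))>1$ from the second hypothesis via Lemmas~\ref{lem 3.3} and~\ref{lem 3.4}, then apply Lemma~\ref{lem 3.1} with $(h_1,h_2)=(f_2,\mathbf{f})$ and unwind as in Proposition~\ref{pro 3.5}. The bookkeeping of the $\epsilon$-margins ($\epsilon<1/6$ for the first condition, $\epsilon<1/4$ for the second) matches what the paper implicitly relies on.
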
	
\begin{proof}
	
Note that $ \delta_{B}>(1-(1/2-3\epsilon)\mathcal{Z}_{k}^{-1})^{1/k} $ is equivalent to $ \mathcal{Z}_{k}\delta_{B}^{k}-\mathcal{Z}_{k}+1>1/2+3\epsilon $. Therefore, by Lemma \ref{lem 3.3}, we have \begin{center}
	$ \mathbb{E}_{b \in Z(W_{1})}g_{2}(b,N) > (1-\epsilon)(1/2+3\epsilon)>1/2+2\epsilon $
\end{center}
provided that $ N $	is large enough depending on $ \epsilon $.

Therefore, \begin{center}
	$ \mathbb{E}_{b \in Z(W_{1})}f_{2}(b)\geq \dfrac{1}{1+\epsilon}\mathbb{E}_{b \in Z(W_{1})}\big(g_{2}(b,N)-\epsilon/2\big)>1/2 $.
\end{center}
Similarly, using another density condition, we have $ \mathbb{E}_{b \in Z(W)}(\mathbf{f}(b)+f_{2}(b))>1 $. By Lemma \ref{lem 3.1}, repeating the similar arguments in the proof of Proposition \ref{pro 3.5}, we can get the conclusions.

\end{proof}
\subsection{Conclusions}
\begin{proposition}\label{pro 3.9}
	Let $ \alpha \in \mathbb{T} $ . For $ b\in [W_{1}] $ with $ b \in Z(W_{1}) $, we have
	\begin{center}
		$ \mid \widehat{\nu_{b}^{\prime}}(\alpha) - \widehat{1_{[N]}}(\alpha) \mid =o(N). $
	\end{center}
\end{proposition}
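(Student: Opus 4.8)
The plan is to establish the Fourier-uniformity estimate for the prime-power majorant $\nu_b'$ via the circle method, following the standard W-trick argument as carried out in Chow's work and in \cite[Section 2]{Chow} (to which the excerpt already refers for the mean value $\mathbb{E}_{n\in[N]}\nu_b'(n)\sim 1$). First I would observe that $\widehat{\nu_b'}(\alpha)=\sum_{n\in[N]}\nu_b'(n)e(n\alpha)$ unwinds, after substituting $W_1 n + b = p^k$, into a weighted exponential sum over primes $p\le (W_1 N + b)^{1/k}$ lying in the fixed residue class determined by $b$ modulo $W_1$; the weight $kp^{k-1}\log p$ is exactly what is needed so that partial summation converts this into (a normalization of) the classical sum $\sum_{p} (\log p)\, e(p^k \beta)$ over an arithmetic progression. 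The goal $|\widehat{\nu_b'}(\alpha)-\widehat{1_{[N]}}(\alpha)| = o(N)$ is a statement that this exponential sum, suitably normalized by $\varphi(W_1)/(W_1\sigma_1(b))$, is within $o(N)$ of $\widehat{1_{[N]}}(\alpha)=\sum_{n\in[N]}e(n\alpha)$ uniformly in $\alpha\in\mathbb{T}$.

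The key steps, in order, would be: (1) pass from $n$ to $p$ via $W_1 n + b = p^k$ and rewrite $\widehat{\nu_b'}(\alpha)$ as a normalized sum $\frac{\varphi(W_1)}{W_1\sigma_1(b)}\sum_{p\le P,\ p^k\equiv b\ (W_1)} k p^{k-1}(\log p)\, e\!\big(\tfrac{p^k-b}{W_1}\alpha\big)$ where $P=(W_1 N+b)^{1/k}$; (2) apply a Hardy--Littlewood dissection of $\mathbb{T}$ into major arcs $\mathfrak{M}$ and minor arcs $\mathfrak{m}$ at a level governed by $w=\log\log\log n_0$ (so that $W_1$ is a tiny power of $\log N$); (3) on the minor arcs, invoke Weyl-type / Vinogradov-type bounds for exponential sums over primes with a $k$-th power argument — here one uses that for $k\notin\{1,2,4,8,9\}$ the relevant Weyl sum admits a power-saving bound, which is precisely why these exceptional $k$ are excluded in this section (Vinogradov's method, or Hua's inequality combined with the large sieve, as in Chow) — giving a contribution of size $O(N^{1-\delta})$ for some $\delta>0$ after accounting for the $W_1$-normalization; (4) on the major arcs, perform the standard singular-series / singular-integral analysis: the local factor at primes $p\le w$ is absorbed exactly by the normalization $\varphi(W_1)/(W_1\sigma_1(b))$ together with the congruence condition $p^k\equiv b$, and the main term matches $\widehat{1_{[N]}}(\alpha)$ up to an error that is $o(N)$ uniformly, using the Siegel--Walfisz theorem (valid since $W_1 = o(\log N)$, indeed $W_1$ is much smaller) to control primes in progressions to modulus $W_1$.

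The main obstacle I expect is the minor-arc estimate: obtaining a bound that is $o(N)$ \emph{uniformly} in $\alpha$ and with the correct dependence on $W_1$. One must ensure that the power saving from the Weyl/Vinogradov bound for $\sum_p (\log p) e(p^k\beta)$ beats the loss incurred by the normalizing factor $\varphi(W_1)/(W_1\sigma_1(b))$ and by the change of variables; since $W_1$ grows only like a very slow function of $N$ (we have $W_1 = o(\log N)$), a genuine power saving $P^{k-\delta'}$ in the unnormalized sum translates to $o(N)$ after normalization, but this requires care in tracking how $\sigma_1(b)$ and $\varphi(W_1)$ enter. A secondary subtlety is the major-arc analysis near $\alpha$ with small denominator $q \nmid W_1$: there the truncated singular series must be shown to be $o(1)$ unless $q=1$, so that $\widehat{\nu_b'}$ genuinely tracks $\widehat{1_{[N]}}$ rather than developing spurious peaks; this is handled by the cancellation in the Ramanujan-type sums coming from the averaging over the residue class. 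Since both of these are by now routine in the W-tricked Waring--Goldbach literature, I would carry them out by citing the analogous computations in \cite{Chow} and \cite{Gao}, indicating the (minor) modifications needed to accommodate the weight $kp^{k-1}\log p$ and the modulus $W_1=\prod_{1<p\le w}p^k$.
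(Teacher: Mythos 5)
Your circle-method framework is the right one and matches the approach the paper is implicitly invoking (the paper's ``proof'' is simply the citation \cite[Section~5]{Gao}, which in turn follows Chow's method from \cite{Chow}). However, there is a genuine misdiagnosis at a key technical point.

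You attribute the exclusion of $k\in\{2,4,8,9\}$ to the minor arcs, claiming that ``for $k\notin\{1,2,4,8,9\}$ the relevant Weyl sum admits a power-saving bound, which is precisely why these exceptional $k$ are excluded.'' This is incorrect: Weyl/Vinogradov/Hua-type minor-arc estimates for $\sum_{p\le P}(\log p)\,e(p^k\beta)$ give a power saving for \emph{every} $k\ge 2$, with no special behaviour at $2,4,8,9$. The constraint has nothing to do with minor arcs. As the paper's own Remark explains, the obstruction is that the pseudorandomness of $\nu_b'$ relative to the modulus $W_1=\prod_{1<p\le w}p^k$ fails for $k\in\{2,4,8,9\}$ (``see \cite[proof of Lemma 5.6]{Gao}''), and the fix in Section~4 is to replace $W_1$ by the \emph{larger} modulus $W_2=\prod_{1<p\le w}p^{2k}$ --- a change that only affects the local (major-arc) structure, not any minor-arc estimate. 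Concretely, the obstruction lives in what you label a ``secondary subtlety'': the major-arc analysis near $a/q$ with $q$ sharing prime factors with $W_1$. There one must control local Gauss-type sums of the shape $\sum_{x\bmod q,\;x^k\equiv b\ (W_1)}e(ax^k/q)$, and the required cancellation (so that the truncated singular series is $1+o(1)$ at $q=1$ and $o(1)$ otherwise) uses that the $k$-th power structure modulo $p^j$ has stabilized by $j=k$ for all $p\le w$. For certain small prime-power $k$ this stabilization threshold $\gamma(k,p)$ is too close to $k$ (e.g.\ $\gamma(2,2)=3>2$), and $p^k$ is not a high enough power; this is precisely the step that forces $W_2=\prod p^{2k}$ in Section~4.

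The practical consequence is that your sketch, taken at face value, would appear to establish $\lvert\widehat{\nu_b'}(\alpha)-\widehat{1_{[N]}}(\alpha)\rvert=o(N)$ for \emph{all} $k\ge 2$ with $W=W_1$, contradicting the paper's Remark. To repair it, you should move the $k\notin\{2,4,8,9\}$ hypothesis out of step~(3) and into step~(4), and there carry out (or cite from \cite{Gao} and \cite{Chow}) the local computation at primes $p\le w$ showing that $p^k\ge p^{\gamma(k,p)+1}$ (or whatever threshold the lemma in \cite{Gao} actually needs) for the relevant $k$, so that the W-tricked local factor genuinely collapses to the normalization $\varphi(W_1)/(W_1\sigma_1(b))$. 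Your step~(3) minor-arc argument and the Siegel--Walfisz input in step~(4) are otherwise sound, and the uniformity bookkeeping you flag ($W_1=o(\log N)$, tracking $\varphi(W_1)$ and $\sigma_1(b)$) is the right thing to keep an eye on.
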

\begin{proof}
	See \cite[Section 5]{Gao}.
\end{proof}
\begin{proposition}\label{pro 3.10}(\cite[Proposition 4.2]{Sal})
	Let $ \alpha \in \mathbb{T} $ . For $ b\in [W_{1}] $ with $ b \in Z(W_{1}) $, we have
	\begin{center}
		$ \mid \widehat{\nu_{b}^{\prime\prime}}(\alpha) - \widehat{1_{[N]}}(\alpha) \mid =o(N). $
	\end{center}
\end{proposition}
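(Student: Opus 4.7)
The plan is to rewrite $\widehat{\nu_{b}^{\prime\prime}}(\alpha)$ as a Weyl-type exponential sum over $k$-th powers in residue classes modulo $W_{1}$, approximate it by an integral that naturally recovers $\widehat{1_{[N]}}(\alpha)$, and control the resulting discrepancy uniformly in $\alpha$ via a major/minor arc dichotomy calibrated to the $W$-trick parameter.

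First, I would unpack the definition of $\nu_{b}^{\prime\prime}$. Substituting $n=(t^{k}-b)/W_{1}$ and grouping the allowed $t$ by its residue $c \in C_{b}:=\{c\in [W_{1}]:c^{k}\equiv b\pmod{W_{1}}\}$ (so $|C_{b}|=\sigma_{1}(b)$), I would write $t=W_{1}m+c$ and obtain
\[
\widehat{\nu_{b}^{\prime\prime}}(\alpha)=\frac{k\,e(-b\alpha/W_{1})}{\sigma_{1}(b)}\sum_{c\in C_{b}}\sum_{m\in I_{c}}(W_{1}m+c)^{k-1}\,e\!\left(\frac{(W_{1}m+c)^{k}\alpha}{W_{1}}\right),
\]
where $I_{c}$ is the range of $m$ for which $(W_{1}m+c)^{k}\in [W_{1}+b,W_{1}N+b]$.

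Second, I would compare each inner sum with the corresponding Riemann integral via Euler--Maclaurin summation. The substitution $u=((W_{1}x+c)^{k}-b)/W_{1}$, for which $du=k(W_{1}x+c)^{k-1}dx/W_{1}$, converts the integral cleanly into $(e(b\alpha/W_{1})/k)\int_{0}^{N}e(u\alpha)\,du$. Summing over the $\sigma_{1}(b)$ residues $c$ and folding in the prefactor recovers $\int_{0}^{N}e(u\alpha)\,du$ as the main term, which differs from $\widehat{1_{[N]}}(\alpha)$ by $O(1)$.

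Third, I would control the Riemann discrepancy to $o(N)$ uniformly in $\alpha\in\mathbb{T}$ by a major/minor arc dissection with a slowly growing parameter $Q$ (say $Q$ a small power of $W_{1}$). On minor arcs, Weyl's inequality applied to $\sum_{m}e((W_{1}m+c)^{k}\alpha/W_{1})$, combined with partial summation to absorb the $(W_{1}m+c)^{k-1}$ weight, gives $\widehat{\nu_{b}^{\prime\prime}}(\alpha)=o(N)$, while the usual geometric series bound yields $\widehat{1_{[N]}}(\alpha)=O(1/\|\alpha\|)=o(N)$. On a major arc centered at $a/q$, the sum factors through a Gauss-type sum modulo $qW_{1}$; the $W$-trick calibration $w=\log\log\log n_{0}$ so that $W_{1}=o(\log N)$, together with $(b,W_{1})=1$, ensures that either $q\mid W_{1}$ (so the Gauss sum takes its expected value and the Euler--Maclaurin main term already agrees with $\widehat{1_{[N]}}(\alpha)$) or the Gauss sum exhibits enough cancellation to absorb the contribution into $o(N)$.

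The main obstacle is the uniformity in $\alpha$ (and in $b\in Z(W_{1})$) of the Riemann discrepancy estimate: one must balance the major arc radius and the Weyl exponent so that the two regimes overlap across all $\alpha$, and simultaneously verify that every implied constant is independent of $b$. The $W$-trick parameter is calibrated precisely so that $W_{1}$ grows slowly enough to permit a non-trivial Weyl saving on minor arcs, yet fast enough to absorb every problematic small-denominator obstruction on major arcs; striking this balance, and pushing the error through the non-smooth boundary of $I_{c}$ via careful summation by parts, are the two steps where the technical care is concentrated.
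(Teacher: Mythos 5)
The paper does not supply its own argument here: Proposition~\ref{pro 3.10} is imported verbatim from Salmensuu, and its proof lives in \cite[Section 7]{Sal}. So there is no in-paper proof to compare against, and what follows is an assessment of your sketch on its own terms. Your circle-method outline is the standard route to this kind of pseudorandomness statement and has the right overall shape: split the $k$-th powers by residue $c\bmod W_1$, compare each $m$-sum with a Riemann integral that, after the substitution $u=((W_1x+c)^k-b)/W_1$, returns $\int_0^N e(u\alpha)\,du$, and then control the difference $\widehat{\nu_b''}(\alpha)-\widehat{1_{[N]}}(\alpha)$ by a major/minor dissection with Weyl's inequality on minor arcs and Gauss-sum cancellation on major arcs. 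This is, as far as one can tell, also the strategy pursued by Salmensuu (who in turn follows Chow \cite{Chow}).

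Two places need tightening. First, a computational slip: with $u=((W_1x+c)^k-b)/W_1$ the chain rule gives $du = k(W_1x+c)^{k-1}\,dx$ with no extra $1/W_1$; your written $du$ carries a spurious $1/W_1$, which, if taken literally, would make the recovered main term $W_1\int_0^N e(u\alpha)\,du$ rather than $\int_0^N e(u\alpha)\,du$. (Your stated conclusion is the correct one, so this looks like a typo rather than a real error.) Second, the major-arc paragraph is the thinnest part and conflates several cases. Since the phase in $m$ has leading coefficient $W_1^{k-1}\alpha$, the relevant rationals are those of the form $a/(qW_1^{k-1}/(q,W_1^{k-1}))$ rather than simply $a/q$, and the calibration must be made explicit: the Euler--Maclaurin comparison is only effective on a neighborhood of $0$ of width $\ll (W_1N)^{-(k-1)/k}$; for $q$ coprime to $W_1$ one needs the Weil bound $\sum_{z\bmod q}e(z^k a'/q)\ll q^{1/2}$ so that, since such $q$ exceeds $w$, the contribution is $O(Nq^{-1/2})=O(Nw^{-1/2})=o(N)$; and for $q\mid W_1$ one must check (it is true, but requires an argument) that $\nu_b''$ is unbiased modulo $q$ because $W_1n+b\equiv b\pmod q$ imposes no constraint on $n$. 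Likewise, on the minor arcs you should verify that the dissection parameter is chosen so that "minor" entails $\|\alpha\|\gg 1/(w N)$, so that $\widehat{1_{[N]}}(\alpha)=O(1/\|\alpha\|)$ is genuinely $o(N)$. None of these issues is a conceptual gap; they are exactly the uniformity checks you identify at the end, and carrying them out would complete the argument.
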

\begin{proposition}\label{pro 3.11}
	Let $ s\in \mathbb{N} $ and $ s>k(k+1) $. For $ b\in [W_{1}] $ with $ b\in Z(W_{1}) $ and $ q\in (s-1,s) $ , we have \begin{center}
		$ \| \widehat{f_{b}^{\prime}} \|_{q} \ll N^{1-1/q} $.
	\end{center}	
\end{proposition}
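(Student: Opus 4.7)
The plan is to deduce the desired restriction estimate for $f_b'$ from the corresponding estimate for its majorant $\nu_b'$ by exploiting the pointwise inequality $f_b'(n) \leq \nu_b'(n)$ (which holds for every $n$ because $A \subseteq \mathbb{P}$, so each prime $p \in A$ contributing to $f_b'$ also contributes to $\nu_b'$). Both functions are non-negative, so the domination is preserved under convolution, and this will be the only structural property of $A$ that I use; the number-theoretic content is concentrated in the already-available restriction estimate for the full prime-power majorant.

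Concretely, I would first record the trivial supremum bound. By non-negativity of $f_b'$ and $\nu_b'$ together with the asymptotic $\mathbb{E}_{n \in [N]} \nu_b'(n) \sim 1$ noted just after \eqref{equ3},
$$\|\widehat{f_b'}\|_\infty \leq \widehat{f_b'}(0) = \sum_n f_b'(n) \leq \sum_n \nu_b'(n) = \widehat{\nu_b'}(0) \ll N.$$
Next, for any positive integer $u$, the pointwise domination propagates to the $u$-fold convolution, i.e.\ $0 \leq f_b'^{*u}(n) \leq \nu_b'^{*u}(n)$ for every $n$. Applying Plancherel on $\mathbb{T}$ then gives the moment comparison
$$\|\widehat{f_b'}\|_{2u}^{2u} = \sum_n (f_b'^{*u}(n))^2 \leq \sum_n (\nu_b'^{*u}(n))^2 = \|\widehat{\nu_b'}\|_{2u}^{2u}.$$

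For the majorant, the key input is the restriction estimate $\|\widehat{\nu_b'}\|_{2u}^{2u} \ll N^{2u-1}$, applied at an even integer $2u$ with $k(k+1) \leq 2u \leq q$; such a $2u$ exists because $k(k+1)$ is itself even and the hypotheses force $q > s-1 \geq k(k+1)$. This bound is the prime $k$-th power analogue of Hua's lemma at the Vinogradov critical exponent, proved by the standard Hardy--Littlewood decomposition of $\mathbb{T}$ into major and minor arcs: on the major arcs one uses the prime number theorem in arithmetic progressions together with a singular series and singular integral computation, and on the minor arcs one combines Vaughan/Wooley-type Weyl bounds for exponential sums over primes with a Hua-type $L^2$ average. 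The $W$-tricked version tailored to $\nu_b'$ is precisely what is carried out in \cite[Section 5]{Gao} (compare also \cite{Sal}), which I would simply cite.

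Given this bound, the conclusion follows by Hölder's inequality:
$$\|\widehat{f_b'}\|_q^q \leq \|\widehat{f_b'}\|_\infty^{q-2u} \|\widehat{f_b'}\|_{2u}^{2u} \ll N^{q-2u} \cdot N^{2u-1} = N^{q-1},$$
which gives $\|\widehat{f_b'}\|_q \ll N^{1-1/q}$ as required. The main obstacle is precisely the restriction estimate for $\nu_b'$ at an integer moment no larger than $q$; this is where the hypothesis $s > k(k+1)$ enters, guaranteeing the existence of a suitable even $2u \leq q$ at which the Vinogradov-level bound is available. Since that number-theoretic estimate is already in the literature for the same majorant, the novel content here is only the pointwise-and-Plancherel transfer from $\nu_b'$ to $f_b'$.
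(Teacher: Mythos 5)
Your transfer step is fine: $f_b'\leq\nu_b'$ pointwise does imply $\|\widehat{f_b'}\|_{2u}^{2u}\leq\|\widehat{\nu_b'}\|_{2u}^{2u}$ for even $2u$ via Parseval on the $u$-fold convolution, and this is indeed one ingredient of the argument in the literature. The gap is in the step where you invoke the even-moment bound $\|\widehat{\nu_b'}\|_{2u}^{2u}\ll N^{2u-1}$ at $2u=k(k+1)$. The hypothesis $s>k(k+1)$ permits $s=k(k+1)+1$, in which case every admissible $q\in(s-1,s)$ lies in $\big(k(k+1),k(k+1)+1\big)$ and the only even integer in $[k(k+1),q]$ is $k(k+1)$ itself, the critical Vinogradov exponent. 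At that critical moment the known mean-value bounds (including Bourgain--Demeter--Guth/Wooley) carry an $N^{\epsilon}$ loss, so what is actually available is $\|\widehat{\nu_b'}\|_{k(k+1)}^{k(k+1)}\ll N^{k(k+1)-1+\epsilon}$; the clean endpoint bound you assert is not established. Your H\"older interpolation against $\|\widehat{f_b'}\|_\infty\ll N$ (which has no power saving to spare) then only produces $\|\widehat{f_b'}\|_q^q\ll N^{q-1+\epsilon}$, which is not the conclusion, and is also insufficient for Proposition \ref{pro 1.9}, which requires the exponent $N^{q-1}$ with no loss.

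The paper's proof is a citation to \cite[proof of Proposition 6.3]{Gao}, and the surrounding text explains that this goes through Chow's method from \cite[Section 5]{Chow}. That argument works at the non-integer exponent $q$ directly: it combines a large-values (level-set) decomposition of $\widehat{f_b'}$ with the major/minor arc splitting of $\mathbb{T}$, so that the Weyl-type power saving on the minor arcs absorbs the $\epsilon$-loss from the critical mean value precisely because $q>k(k+1)$ \emph{strictly}. This $\epsilon$-removal is the ingredient your plan omits; a single H\"older interpolation between a global $L^{2u}$ bound and $L^\infty$ cannot recover it, because neither factor sees the minor-arc saving. (As a minor point, \cite[Section 5]{Gao} is what this paper cites for the pseudorandomness estimate, Proposition \ref{pro 3.9}; the restriction estimate is \cite[Proposition 6.3]{Gao}.) Your approach would succeed for $s\geq k(k+1)+3$, where an even $2u$ strictly above $k(k+1)$ fits below $q$ and the corresponding moment bound is clean, but it does not establish the proposition as stated for all $s>k(k+1)$.
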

\begin{proof}
	See \cite[proof of Proposition 6.3]{Gao}.
\end{proof}
\begin{proposition}\label{pro 3.12}(\cite[Proposition 4.3]{Sal})
	Let $ s\in \mathbb{N} $ and $ s>k(k+1) $. For $ b\in [W_{1}] $ with $ b\in Z(W_{1}) $ , there exists $ q\in (s-1,s) $ such that \begin{center}
	$ \| \widehat{f_{b}^{\prime\prime}} \|_{q} \ll N^{1-1/q} $.
\end{center}	
\end{proposition}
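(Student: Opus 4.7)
The plan is to reduce this $L^q$ restriction estimate for $\widehat{f_b^{\prime\prime}}$ to a Waring-type mean value theorem, following the standard $W$-trick template. First I would rewrite the Fourier transform: since $f_b^{\prime\prime}(n)$ is supported on those $n\in[N]$ with $W_1n+b=t^k\in B$, the substitution $n=(t^k-b)/W_1$ gives
\[
\widehat{f_b^{\prime\prime}}(\alpha)=\frac{k\,e(-b\alpha/W_1)}{\sigma_1(b)}\sum_{\substack{t\le T\\ t^k\equiv b\,(\bmod\,W_1)\\ t^k\in B}}t^{k-1}\,e\!\left(\frac{\alpha t^k}{W_1}\right),
\]
with $T\asymp(W_1N)^{1/k}$. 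The weight $t^{k-1}$ is essentially $(t^k)'/k$, so Abel summation in $t$ controls this weighted sum (up to the factor $T^{k-1}$ from boundary terms) by the unweighted exponential sum $F(\beta):=\sum_{t\le T,\,t^k\equiv b}e(\beta t^k)$ evaluated at $\beta=\alpha/W_1$. Since $f_b^{\prime\prime}\le\nu_b^{\prime\prime}$ pointwise, for even integer exponents $2m$ the inequality $\|\widehat{f_b^{\prime\prime}}\|_{2m}^{2m}\le\|\widehat{\nu_b^{\prime\prime}}\|_{2m}^{2m}$ holds automatically, since each side counts weighted solutions to the same additive equation $n_1+\cdots+n_m=n_{m+1}+\cdots+n_{2m}$ and $f_b^{\prime\prime}$ has smaller weights.

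Second, I would invoke Wooley's resolution of Vinogradov's mean value conjecture (equivalently the Bourgain--Demeter--Guth decoupling theorem): for any even integer $2m>k(k+1)$ one has $\int_{\mathbb{T}}|F(\beta)|^{2m}d\beta\ll T^{2m-k}$. Tracking the change of variables $\beta=\alpha/W_1$ through the normalizations, together with the identity $T^k=W_1N$, this translates into $\|\widehat{\nu_b^{\prime\prime}}\|_{2m}^{2m}\ll N^{2m-1}$ (all powers of $W_1$ cancel because the same $W_1$ governs both the normalization constant $k/\sigma_1(b)$ and the length of the summation range over $t$). Because the hypothesis reads $s>k(k+1)$, we are free to choose an even integer $2m$ just above $s$, so this integer-moment bound is available.

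The main obstacle is then the passage from the integer-moment bound at $2m$ down to a non-integer exponent $q\in(s-1,s)$. This is handled by a Bourgain-style $\epsilon$-removal lemma: one performs a dyadic decomposition of the level sets $\{\alpha\in\mathbb{T}:|\widehat{f_b^{\prime\prime}}(\alpha)|\sim\lambda\}$, bounds their Lebesgue measure using the integer-moment estimate together with a Weyl-differencing input to obtain a restricted weak-type inequality, and reassembles the $L^q$ norm via H\"older. This produces $\|\widehat{f_b^{\prime\prime}}\|_q^q\ll N^{q-1}$ for some $q$ strictly less than $2m$, and the flexibility of only needing \emph{some} $q$ inside the open interval $(s-1,s)$ is exactly what the $\epsilon$-removal provides. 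The detailed execution of all three steps follows \cite[Proposition 4.3]{Sal}, which is the reference directly invoked in the paper.
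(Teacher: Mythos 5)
Your outline correctly reconstructs the Bourgain--Green--Chow framework that Salmensuu's Proposition 4.3 rests on, and since the paper offers no independent argument but simply cites that reference, your proposal and the paper take the same route: rewrite $\widehat{f_b^{\prime\prime}}$ as a $W$-twisted $k$-th power exponential sum, deduce even-moment bounds from the post-BDG/Wooley Waring mean value estimate using the majorant monotonicity $f_b^{\prime\prime}\leq\nu_b^{\prime\prime}$, and then pass to a non-integer $q\in(s-1,s)$ via the Bourgain $\epsilon$-removal level-set argument (which is precisely why the conclusion is stated as ``there exists $q$'' rather than ``for all $q$,'' matching the parenthetical remark following Proposition \ref{pro 3.13}). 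The threshold $s>k(k+1)$ is exactly the one these ingredients supply, so the proposal is correct and consistent with the cited source.
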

\begin{proposition}\label{pro 3.13}
	Let $ s>k(k+1) $. For $ b\in [W_{1}] $ with $ b\in Z(W_{1}) $ and  $ q\in (s-1,s) $ , we have \begin{center}
		$ \| \widehat{\mathbf{f}}_{b} \|_{q} \ll N^{1-1/q} $.
	\end{center}	
\end{proposition}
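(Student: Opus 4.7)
The plan is to observe that $\mathbf{f}_{b}$ and $f'_{b}$ are structurally identical functions on $[N]$: comparing the two definitions, the only difference is that $f'_{b}$ restricts attention to primes $p\in A$ while $\mathbf{f}_{b}$ restricts to primes $p\in \mathcal{P}$; the Chebyshev-type weight $\frac{\varphi(W_{1})}{W_{1}\sigma_{1}(b)} k p^{k-1}\log p$ is the same, and both satisfy the pointwise domination $\mathbf{f}_{b}(n)\leq \nu'_{b}(n)$ for all $n\in[N]$, since $\mathcal{P}\subseteq \mathbb{P}$. Consequently I would prove the proposition by repeating verbatim the argument of \cite[proof of Proposition 6.3]{Gao} that establishes Proposition \ref{pro 3.11}, with $A$ replaced by $\mathcal{P}$ at every occurrence.

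The steps of that argument, specialized to $\mathbf{f}_{b}$, are as follows. First, decompose $\mathbb{T}$ into Hardy--Littlewood major arcs and minor arcs via Dirichlet's approximation theorem. On the minor arcs, bound $\widehat{\mathbf{f}}_{b}$ pointwise by $|\widehat{\nu'_{b}}|$ and invoke Weyl-type estimates for exponential sums $\sum_{p} e(\alpha p^{k})\log p$, combined with a Hua-type $2s$-th moment bound; this is precisely where the hypothesis $s>k(k+1)$ is needed so that the resulting moment estimate absorbs the loss coming from integrating $|\widehat{\mathbf{f}}_{b}|^{q}$ on the minor arcs. On the major arcs, apply the pseudorandomness Proposition \ref{pro 3.9}, which approximates $\widehat{\nu'_{b}}(\alpha)$ by $\widehat{1_{[N]}}(\alpha)$ with error $o(N)$, in order to control $\|\widehat{\mathbf{f}}_{b}\|_{\infty}$ on that region; then convert the $L^{\infty}$ control into the desired $L^{q}$ bound by interpolation against the trivial estimate $\|\widehat{\mathbf{f}}_{b}\|_{2}^{2}=\sum_{n\in[N]}\mathbf{f}_{b}(n)^{2}\ll N$.

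None of the ingredients---Weyl's inequality, Hua's inequality, and the pseudorandomness of $\nu'_{b}$---depends on which dense subset of primes the support of $\mathbf{f}_{b}$ lies in, provided that subset is contained in $\mathbb{P}$ and the weights agree. For this reason there is essentially no new difficulty: the would-be obstacle, namely extracting information from the specific structure of $\mathcal{P}$, never arises, because the restriction estimate is controlled entirely by the common majorant $\nu'_{b}$ shared by $f'_{b}$ and $\mathbf{f}_{b}$. The result therefore follows as a formal translation of Proposition \ref{pro 3.11} from $A$ to $\mathcal{P}$.
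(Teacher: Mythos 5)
Your key observation---that the restriction estimate depends only on the common majorant $\nu'_b$, and hence the argument for $f'_b$ transfers to $\mathbf{f}_b$ once $A$ is replaced by $\mathcal{P}$---is exactly the logic behind the paper's proof, which simply cites \cite[proof of Proposition 6.4]{Gao}, itself an application of Chow's method \cite[Section 5]{Chow}. However, the mechanism you sketch contains two errors that would prevent it from closing if carried out. First, pointwise domination $\mathbf{f}_b(n)\le\nu'_b(n)$ does \emph{not} give $|\widehat{\mathbf{f}}_b(\alpha)|\le|\widehat{\nu'_b}(\alpha)|$: Fourier coefficients of a dominated non-negative function are not dominated in absolute value except at $\alpha=0$. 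Your proposed minor-arc bound, and your use of Proposition~\ref{pro 3.9} to control $\|\widehat{\mathbf{f}}_b\|_\infty$ on the major arcs, both rest on this false implication. Second, by Parseval $\|\widehat{\mathbf{f}}_b\|_2^2=\sum_{n\in[N]}\mathbf{f}_b(n)^2$ is of order $N^{2-1/k}$ (up to $W_1$ and logarithmic factors), not $\ll N$: the weight $kp^{k-1}\log p$ places total mass $\sim N$ on about $N^{1/k}$ points, each of size about $N^{1-1/k}$, so the function is spiky. Interpolating the correct bounds $\|\widehat{\mathbf{f}}_b\|_2\ll N^{1-1/(2k)}$ and $\|\widehat{\mathbf{f}}_b\|_\infty\ll N$ gives only $N^{1-1/(kq)}$, which is strictly weaker than the target $N^{1-1/q}$ for every $k\ge 2$; the arithmetic structure has to do real work here.

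What Chow's method (and hence \cite{Gao}) actually does is a Bourgain--Tomas restriction argument via the large spectrum: one bounds $|E_T|$ where $E_T=\{\alpha:|\widehat{\mathbf{f}}_b(\alpha)|>T\}$ by writing $T|E_T|\le\int_{E_T}|\widehat{\mathbf{f}}_b|$, unfolding the Fourier transform, and applying Cauchy--Schwarz using $\mathbf{f}_b\le\nu'_b$ to reduce to controlling $\int_{E_T}\int_{E_T}|\widehat{\nu'_b}(\alpha-\beta)|\,d\alpha\,d\beta$; the Weyl-type minor-arc decay and the Hua-type moment bounds are then applied to $\widehat{\nu'_b}$ inside that double integral, and the hypothesis $s>k(k+1)$ is what makes the dyadic sum over levels $T$ converge. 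It is through this Cauchy--Schwarz step, not pointwise Fourier domination, that the majorant enters. Once that is the scaffolding, your high-level conclusion is correct and is the paper's: nothing in the argument uses $\mathcal{P}$ beyond $\mathcal{P}\subseteq\mathbb{P}$ and the shared Chebyshev weight, so Proposition~\ref{pro 3.13} follows by the same proof as Proposition~\ref{pro 3.11}.
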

\begin{proof}
	See \cite[proof of Proposition 6.4]{Gao}.
\end{proof}

In \cite{Gao}, the author use Chow's method \cite[Section 5]{Chow} to prove \cite[Proposition 6.3]{Gao} and \cite[Proposition 6.4]{Gao}. In fact, using Chow's method, we can prove Proposition \ref{pro 3.11} and Proposition \ref{pro 3.13} for any $ q\in (s-1,s) $.

$ \mathit{Proof \ of \ Theorem \ \ref{theorem 1.1}} $. Recall the definition of $ n_{0} $ at the beginning of Subsection 3.1. Our aim is to prove that $ n_{0} \in s_{1}A^{(k)}+s_{2}B  $ provided that $ n_{0} $ is sufficiently large.

Choose $ \epsilon \in (0,1/4) $ such that $ \delta_{A}>1-1/2k+2\epsilon/k $ and $ k\delta_{A}+\mathcal{Z}_{k}\delta_{B}^{k}>\mathcal{Z}_{k}+k-1+4\epsilon $ . By Proposition \ref{pro 3.5}, there exist $ b_{1},\ldots,b_{s_{1}},b_{s_{1}+1},\ldots,b_{s_{1}+s_{2}} \in [W_{1}] $ such that $ n_{0}\equiv b_{1}+\cdots+b_{s_{1}}+b_{s_{1}+1}+\cdots+b_{s_{1}+s_{2}} (\bmod\ W_{1})$ , $ (b_{i} \bmod W_{1})\in Z(W_{1}) $, for all $ i \in \{1,\ldots,s_{1}+s_{2}\} $, and the functions $ f^{\prime}_{b_{1}},\ldots,f^{\prime}_{b_{s_{1}}},f^{\prime\prime}_{b_{s_{1}+1}},\ldots,f^{\prime\prime}_{b_{s_{1}+s_{2}}} $ satisfy the mean conditions 
\begin{center}
	$ \mathbb{E}_{n\in [N]}f^{\prime}_{b_{1}}(n)+\cdots+f^{\prime}_{b_{s_{1}}}(n)+f^{\prime\prime}_{b_{s_{1}+1}}(n)+\cdots+f^{\prime\prime}_{b_{s_{1}+s_{2}}}(n)>\dfrac{(1+\epsilon)(s_{1}+s_{2})}{2} $
\end{center}
and\begin{center}
	$ \mathbb{E}_{n\in [N]}f^{\prime}_{b_{i}}(n)>\epsilon/2,\ \mathbb{E}_{n\in [N]}f^{\prime\prime}_{b_{j}}(n)>\epsilon/2 $
\end{center}
for all $ i \in \{1,\ldots,s_{1}\},\ j \in \{s_{1}+1,\ldots,s_{1}+s_{2}\} $ . 

By Propositions \ref{pro 3.9}-\ref{pro 3.12}, pseudorandomness condition and restriction condition of Proposition \ref{pro 1.9} hold for the functions $ f^{\prime}_{b_{1}},\ldots,f^{\prime}_{b_{s_{1}}},f^{\prime\prime}_{b_{s_{1}+1}},\ldots,f^{\prime\prime}_{b_{s_{1}+s_{2}}} $ for some $ q\in (s_{1}+s_{2}-1,s_{1}+s_{2})$ and for any $ \eta>0 $.
Let $ N $ be sufficiently large depending on $ \epsilon $ and $ \eta $ be sufficiently small . By Proposition \ref{pro 1.9},\begin{center}
	$ f^{\prime}_{b_{1}}\ast\cdots\ast f^{\prime}_{b_{s_{1}}}\ast f^{\prime\prime}_{b_{s_{1}+1}}\ast\cdots\ast f^{\prime\prime}_{b_{s_{1}+s_{2}}}(n)>0 $
\end{center}
for all $ n\in \bigg(\dfrac{1-\kappa^{2}}{2}sN,\dfrac{1+\kappa}{2}sN\bigg) $, where $ \kappa=\dfrac{\epsilon}{32} $. Therefore, for all such $ n $,\begin{center}
	$ W_{1}n+b_{1}+\cdots+b_{s_{1}+s_{2}}\in s_{1}A^{(k)}+s_{2}B  $.
\end{center}
Let $  n=\dfrac{n_{0}-b_{1}-\cdots-b_{s_{1}+s_{2}}}{W_{1}}\in \mathbb{N}$. Clearly $ n\sim \dfrac{sN}{2} $.  Therefore, we have $ n\in \bigg(\dfrac{1-\kappa^{2}}{2}sN,\dfrac{1+\kappa}{2}sN\bigg) $ when $ N $ is sufficiently large depending on $ \kappa $. As a result, $ n_{0} \in s_{1}A^{(k)}+s_{2}B  $.\qquad\qquad \qquad \qquad \qquad \qquad \qquad \qquad \quad   $ \qedsymbol $

$ \mathit{Proof \ of \ Theorem \ \ref{theorem 1.2}} $. Recall the definition of $ n_{0} $ at the beginning of Subsection 3.1. Our aim is to prove that $ n_{0} \in s_{1}B+s_{2}A^{(k)}  $ provided that $ n_{0} $ is sufficiently large.

Choose $ \epsilon \in (0,1/6) $ such that $  \delta_{B}>(1-(1/2-3\epsilon)\mathcal{Z}_{k}^{-1})^{1/k}$ and $ k\delta_{A}+\mathcal{Z}_{k}\delta_{B}^{k}>\mathcal{Z}_{k}+k-1+4\epsilon   $.  The remaining proof is omitted, since it follows directly by repeating the arguments in the proof of Theorem \ref{theorem 1.1} with Proposition \ref{pro 3.6} in place of Proposition \ref{pro 3.5}.\qquad\qquad\qquad\qquad\qquad\qquad \qquad \qquad \qquad \qquad \qquad \qquad \quad \ \ $ \qedsymbol $

$ \mathit{Proof \ of \ Theorem \ \ref{theorem 1.3}} $. Recall the definition of $ n_{0} $ at the beginning of Subsection 3.1. Our aim is to prove that $ n_{0} \in s_{1}\mathcal{P}^{(k)}+s_{2}B  $ provided that $ n_{0} $ is sufficiently large.

Choose $ \epsilon \in (0,1/6) $ such that $  \delta>1/2+3\epsilon$ and $ \delta+\mathcal{Z}_{k}\delta_{B}^{k}>\mathcal{Z}_{k}+4\epsilon  $.  The remaining proof is omitted, since it follows directly by repeating the arguments in the proof of Theorem \ref{theorem 1.1} with Proposition \ref{pro 3.7} and Proposition \ref{pro 3.13} in place of Proposition \ref{pro 3.5} and Proposition \ref{pro 3.11} respectively.\qquad\qquad \qquad \qquad \qquad \qquad \qquad \qquad \quad \ \ $ \qedsymbol $

$ \mathit{Proof \ of \ Theorem \ \ref{theorem 1.4}} $. Recall the definition of $ n_{0} $ at the beginning of Subsection 3.1. Our aim is to prove that $ n_{0} \in s_{1}B+s_{2}\mathcal{P}^{(k)}  $ provided that $ n_{0} $ is sufficiently large.

Choose $ \epsilon \in (0,1/6) $ such that $  \delta_{B}>(1-(1/2-3\epsilon)\mathcal{Z}_{k}^{-1})^{1/k}$ and $ \delta+\mathcal{Z}_{k}\delta_{B}^{k}>\mathcal{Z}_{k}+4\epsilon   $.  The remaining proof is omitted, since it follows directly by repeating the arguments in the proof of Theorem \ref{theorem 1.1} with Proposition \ref{pro 3.8} and Proposition \ref{pro 3.13} in place of Proposition \ref{pro 3.5} and Proposition \ref{pro 3.11} respectively.\qquad \qquad \qquad \qquad \quad \ \ $ \qedsymbol $

\section{$ k\in \{4,8,9\} $ }
\subsection{Definitions}
Let $ s_{1},s_{2}\in \mathbb{N} $.
Let $ n_{0} $ be a sufficiently large positive integer satisfying $ n_{0}\equiv s \ (\bmod \ R_{k}) $ .
Let $ w=\log\log\log n_{0} $, \begin{center}
	$ W_{1}:=\prod\limits_{1<p\leq w}p^{k} $
\end{center}  and
\begin{equation}\label{equ4}
W_{2}:=\prod\limits_{1<p\leq w}p^{2k} .
\end{equation}
Let $ b \in [W_{2}] $ be such that 
$ b \in Z(W_{2}) $. Define
\begin{equation}\label{equ5}
\sigma_{2}(b):=\#\{z \in [W_{2}]:z^{k}\equiv b(\bmod \ W_{2})\}.
\end{equation}
Let $ N:=\lfloor 2n_{0}/(sW_{2}) \rfloor $. It is not difficult to prove that
\begin{center}
	$ W_{2}=o(\log N) $.
\end{center}

Let $ A, \mathcal{P} \subseteq \mathbb{P}  $ satisfy $ \pi_{\mathcal{P}}(x)\sim \delta\pi(x),\  \delta\in (0,1]$ and $ B \in \mathbb{N}^{(k)} $. Define functions $ f^{\prime}_{b},\ f^{\prime\prime}_{b},\ \mathbf{f}_{b},\ \nu^{\prime}_{b},\ \nu^{\prime\prime}_{b}:[N]\rightarrow \mathbb{R}_{\geq 0} $ by
\begin{center}
	$  f^{\prime}_{b}(n):=\begin{cases}
	\dfrac{\varphi(W_{2})}{W_{2}\sigma_{2}(b)}kp^{k-1}\log p \quad if \  W_{2}n+b=p^{k}, \ p\in A,  \\
	0  \qquad\qquad\qquad\qquad \  otherwise, 
	\end {cases}  $
\end{center}
\begin{center}
	$  f^{\prime\prime}_{b}(n):=\begin{cases}
	\dfrac{k}{\sigma_{2}(b)}t^{k-1} \quad if \  W_{2}n+b=t^{k}\in B,   \\
	0  \qquad\qquad\qquad\qquad \  otherwise, 
	\end {cases}  $
\end{center}
\begin{center}
	$  \mathbf{f}_{b}(n):=\begin{cases}
	\dfrac{\varphi(W_{2})}{W_{2}\sigma_{2}(b)}kp^{k-1}\log p \quad if \  W_{2}n+b=p^{k}, \ p\in \mathcal{P},  \\
	0  \qquad\qquad\qquad\qquad \  otherwise, 
	\end {cases}  $
\end{center}

\begin{equation}\label{equ6}
	  \nu^{\prime}_{b}(n):=\begin{cases}
	\dfrac{\varphi(W_{2})}{W_{2}\sigma_{2}(b)}kp^{k-1}\log p \quad if \  W_{2}n+b=p^{k}, \ p\in \mathbb{P},  \\
	0  \qquad\qquad\qquad\qquad \ otherwise,
	\end {cases}  	
\end{equation}
and
\begin{center}
	$  \nu^{\prime\prime}_{b}(n):=\begin{cases}
	\dfrac{k}{\sigma_{2}(b)}t^{k-1} \quad if \  W_{2}n+b=t^{k}\in \mathbb{N}^{(k)},   \\
	0  \qquad\qquad\qquad\qquad \  otherwise.
	\end {cases}  $
\end{center}
Define functions $ g_{1},\ g_{2},\ \mathbf{g}:[W_{2}]\times \mathbb{N}\rightarrow \mathbb{R}_{\geq 0} $ by
\begin{center}
	$ g_{1}(b,N):=\mathbb{E}_{n \in [N]}f^{\prime}_{b}(n), $
\end{center}
\begin{center}
	$ g_{2}(b,N):=\mathbb{E}_{n \in [N]}f^{\prime\prime}_{b}(n), $
\end{center}
and
\begin{center}
	$ \mathbf{g}(b,N):=\mathbb{E}_{n \in [N]}\mathbf{f}_{b}(n). $
\end{center}

It is not difficult to prove that $ \mathbb{E}_{n \in [N]} \nu^{\prime\prime}_{b}(n)\sim 1 $. Using the same method of \cite[Section 2]{Chow}, we can also prove that $ \mathbb{E}_{n \in [N]} \nu^{\prime}_{b}(n)\sim 1 $.

Unless otherwise specified, the symbols defined in this subsection will only be used in Section 4.
\subsection{Mean value estimate}
\begin{lemma}\label{lem 4.1}
	Let $ h_{1},h_{2}:Z(W_{2})\rightarrow [0,1) $ satisfy $ \mathbb{E}_{b \in Z(W_{2})}h_{1}(b)>1/2 $ and $ \mathbb{E}_{b \in Z(W_{2})}(h_{1}(b)+h_{2}(b))> 1 $. Let $ s_{2}\geq 1 $ and $ s_{1}\geq 16k\omega(k)+4k+4+s_{2} $. Then, for all $ n \in \mathbb{Z}_{W_{2}} $ with $  n \equiv s_{1}+s_{2} \ (\bmod \ R_{k}) $, there exist $ b_{1},\ldots,b_{s_{1}},b_{s_{1}+1},\ldots,b_{s_{1}+s_{2}} \in Z(W_{2}) $ such that: \\
	(i) $ n\equiv b_{1}+\cdots+b_{s_{1}}+b_{s_{1}+1}+\cdots+b_{s_{1}+s_{2}} (\bmod\ W_{2})$ ;\\
	(ii) $\ h_{1}(b_{i})>0 $ for all $ i \in \{1,\ldots,s_{1}\} $ and $\ h_{2}(b_{j})>0 $ for all $ j \in \{s_{1}+1,\ldots,s_{1}+s_{2}\} $;\\
	(iii)
	$ h_{1}(b_{1})+\cdots+h_{1}(b_{s_{1}})+h_{2}(b_{s_{1}+1})+\cdots+h_{2}(b_{s_{1}+s_{2}})>(s_{1}+s_{2})/2. $
\end{lemma}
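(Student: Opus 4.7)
The plan is to mimic the proof of Lemma \ref{lem 3.1} verbatim, with $W_2$ in place of $W_1$ throughout. Define $\mu_i := \max_{b \in Z(W_2)} h_i(b)$ for $i \in \{1,2\}$ and set $\lambda = 1 - \mu_1$. From the hypothesis
\[
\mu_1 + \mu_2 \geq \mathbb{E}_{b \in Z(W_2)}\bigl(h_1(b)+h_2(b)\bigr) > 1,
\]
we immediately obtain $\mu_2 > \lambda$, so both $\{b : h_1(b) > \lambda\}$ and $\{b : h_2(b) > \lambda\}$ are nonempty; moreover $\mu_1 > 1/2$ since $\mathbb{E} h_1 > 1/2$.

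Next set $A := \{b \in Z(W_2) : h_1(b) > \lambda\}$. The crucial structural input is the analog of the sumset identity invoked in Lemma \ref{lem 3.1}, namely
\[
s'A = \{a \in \mathbb{Z}_{W_2} : a \equiv s' \pmod{R_k}\}
\]
for every $s' \geq 8k\omega(k)+2k+2$. This is the same statement as in \cite[Lemma 6.4]{Sal} but with modulus $W_2$; since the proof there only uses that the modulus is of the form $\prod_{p\leq w} p^{e(p)}$ with $e(p)$ large enough for every $k$-th power residue class coprime to the modulus to be realized by many lifts, and since $W_2$ has strictly larger exponents than $W_1$ at every prime, the argument goes through with no change. (In particular $R_k \mid W_2$ so the congruence condition $a \equiv s' \pmod{R_k}$ is the correct obstruction.)

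With this sumset result in hand, pick $b',b'' \in Z(W_2)$ realizing $h_1(b') = \mu_1$ and $h_2(b'') = \mu_2$, and let $s''$ satisfy $s'' \geq s' + s_2$ with $s_1 = s' + s''$, which is guaranteed by $s_1 \geq 16k\omega(k)+4k+4+s_2$. Apply \cite[Equ (16)]{Sal} to get $s''b' + s_2 b'' \equiv s''+s_2 \pmod{R_k}$, so for any $n \equiv s_1+s_2 \pmod{R_k}$ we have $n - s''b' - s_2 b'' \equiv s' \pmod{R_k}$. The sumset identity then yields $b_1',\dots,b_{s'}' \in A$ with
\[
n - s''b' - s_2 b'' \equiv b_1' + \cdots + b_{s'}' \pmod{W_2}.
\]
Taking $(b_1,\dots,b_{s_1}) = (b_1',\dots,b_{s'}', b',\dots,b')$ (with $b'$ repeated $s''$ times) and $(b_{s_1+1},\dots,b_{s_1+s_2}) = (b'',\dots,b'')$ gives (i) and (ii). For (iii), the same telescoping computation as in Lemma \ref{lem 3.1} applies: $h_1(b_i') > \lambda$ for $i \leq s'$ and $h_1(b') = \mu_1$, $h_2(b'') = \mu_2 > \lambda$, so
\[
\sum_{i=1}^{s_1} h_1(b_i) + \sum_{j=s_1+1}^{s_1+s_2} h_2(b_j) > s''\mu_1 + s'\lambda + s_2\mu_2 > \frac{s_1+s_2}{2},
\]
where the last inequality comes from the identity $(s''-(s'+s_2))\mu_1 + (s'+s_2)(\lambda+\mu_1)$ combined with $\mu_1 > 1/2$ and $\lambda + \mu_1 = 1$.

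The main (and essentially only) step requiring care is verifying that \cite[Lemma 6.4]{Sal} transports from $W_1$ to $W_2$; once that is granted, the rest is a direct copy of the proof of Lemma \ref{lem 3.1}. I expect no genuine obstacle, since the larger exponents in $W_2$ only make the local structure richer, never poorer.
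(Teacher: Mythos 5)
Your proposal is correct and follows the same structure as the paper's proof essentially verbatim: define $\mu_i$, $\lambda$, $A$, invoke the sumset identity $s'A = \{a \in \mathbb{Z}_{W_2} : a \equiv s' \pmod{R_k}\}$ for $s' \geq 8k\omega(k)+2k+2$, then run the same telescoping estimate. One small caveat: the paper does not claim that the proof of [Sal, Lemma 6.4] transfers to $W_2$ with ``no change''; it explicitly substitutes [Gao, Proposition 4.2] for [Sal, Proposition 5.2] to handle the $p^{2k}$ exponents, so while your heuristic (larger exponents only enrich the local structure) captures the right intuition, the rigorous justification needs that specific replacement lemma rather than a literal repetition of Salmensuu's argument.
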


\begin{proof}
	Let $ \mu_{i}:=\max_{b\in Z(W_{2})}h_{i}(b) $ for $ i\in \{1,2\} $ and $ \lambda=1-\mu_{1} $. Note that $ \mu_{1}+\mu_{2}\geq \mathbb{E}_{b \in Z(W_{2})}(h_{1}(b)+h_{2}(b))> 1 $. Therefore, we have $ \mu_{2}> \lambda $. Let $ A:=\{b\in Z(W_{2}):h_{1}(b)>\lambda \} $. By repeating the arguments in \cite[proof of Lemma 6.4]{Sal} with \cite[Proposition 4.2]{Gao} in place of \cite[Proposition 5.2]{Sal}, we can get \begin{center}
		$   s^{\prime}A=\{a\in \mathbb Z_{W_{2}}:a\equiv s^{\prime}\ (\bmod \ R_{k}) \} $
	\end{center}
	for all $ s^{\prime}\geq 8k\omega(k)+2k+2 $.

The remaining proof is omitted, since it follows directly by repeating the arguments in the proof of Lemma \ref{lem 3.1}.	
	
\end{proof}

Recall that\begin{center}
	$\begin{aligned}
	\mathcal{T}_{k}:=\sum\limits_{\substack{n=1\\n \ square-free}}^{\infty}\dfrac{1}{|Z(n^{2k})|}.
	\end{aligned}  $
\end{center}
Clearly, $ \mathcal{T}_{k} $ is convergent and $ \mathcal{T}_{k}\geq 1 $. On the other hand, $ \mathcal{T}_{k} < \mathcal{Z}_{k} $. The reasoning is as follows:

Let $ p $ be an odd prime. Then\begin{center}
	$ |Z(p^{2k})|=\dfrac{p^{2k-1}(p-1)}{(k,p^{2k-1}(p-1))}> \dfrac{p^{k-1}(p-1)}{(k,p^{k-1}(p-1))}=|Z(p^{k})| $.
\end{center}
In addition, we have
\begin{center}
	$ |Z(2^{2k})|=\dfrac{2}{(k,2)}\cdot\dfrac{2^{2k-2}}{(k,2^{2k-2})}>
	\dfrac{2}{(k,2)}\cdot\dfrac{2^{k-2}}{(k,2^{k-2})}=|Z(2^{k})| $.
\end{center}
By \cite[Equ (14)]{Sal}, for each square-free number $ n $, we have \begin{center}
	$ |Z(n^{2k})|> |Z(n^{k})| $.
\end{center}
From \cite[the proof of Lemma 6.3]{Sal}, we know that
\begin{center}
	$\begin{aligned}
	\mathcal{Z}_{k}=\sum\limits_{\substack{n=1\\n \ square-free}}^{\infty}\dfrac{1}{|Z(n^{k})|}.
	\end{aligned}  $ 
\end{center}

Therefore, $ \mathcal{T}_{k}< \mathcal{Z}_{k} $. In \cite{Sal}, Salmensuu has proved that $ \lim_{k \rightarrow \infty}\mathcal{Z}_{k}=1 $. Hence, we have $ \lim_{k \rightarrow \infty}\mathcal{T}_{k}=1 $.

Next, we establish the upper and lower bounds of $ \frac{|\mathbb{Z}_{W_{2}}^{(k)}|}{|Z(W_{2})|} $.
\begin{lemma}\label{lem 4.2}
Let $ \epsilon\in (0,1)$. Then\begin{center}
	$ (1-\epsilon)\mathcal{T}_{k}\leq \dfrac{|\mathbb{Z}_{W_{2}}^{(k)}|}{|Z(W_{2})|}\leq (1+\epsilon)\mathcal{Z}_{k} $
\end{center}
provided that $ n_{0} $	is sufficiently large.
\end{lemma}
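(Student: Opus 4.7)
The plan is to use the Chinese Remainder Theorem to factor $|\mathbb{Z}_{W_2}^{(k)}|/|Z(W_2)|$ as a product of local ratios at primes $p\leq w$, and then to sandwich each local factor between the corresponding Euler factors of $\mathcal{Z}_k$ and $\mathcal{T}_k$.

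First I would use the CRT isomorphism $\mathbb{Z}_{W_2}\cong\prod_{p\leq w}\mathbb{Z}_{p^{2k}}$, which together with multiplicativity of the $k$-th power map gives
\[
\frac{|\mathbb{Z}_{W_2}^{(k)}|}{|Z(W_2)|}=\prod_{p\leq w}\frac{|\mathbb{Z}_{p^{2k}}^{(k)}|}{|Z(p^{2k})|}.
\]
To compute the local numerator I would stratify $t\in\mathbb{Z}_{p^{2k}}$ by $v_p(t)$: the stratum $v_p(t)\geq 2$ contributes the single element $0$; the stratum $v_p(t)=1$ gives elements $p^k u^k$ with $(u,p)=1$, and since $u^k\bmod p^k$ already determines $p^k u^k\bmod p^{2k}$, this yields exactly $|Z(p^k)|$ distinct images; the stratum $v_p(t)=0$ contributes $|Z(p^{2k})|$ images. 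Hence $|\mathbb{Z}_{p^{2k}}^{(k)}|=1+|Z(p^k)|+|Z(p^{2k})|$ and the local ratio is $1+(1+|Z(p^k)|)/|Z(p^{2k})|$.

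The central identity I would then establish is $|Z(p^{2k})|=p^k|Z(p^k)|$ for every prime $p$. For odd $p$ this follows from $|Z(p^r)|=p^{r-1}(p-1)/\gcd(k,p^{r-1}(p-1))$ together with the coprimality $\gcd(p^{r-1},p-1)=1$: writing $k=p^a k'$, the gcd splits and the exponent $\min(a,r-1)$ equals $a$ for both $r=k$ and $r=2k$. For $p=2$ I would exploit $(\mathbb{Z}/2^r)^*\cong\mathbb{Z}/2\times\mathbb{Z}/2^{r-2}$ ($r\geq 3$) and check each case: $k=9$ (odd, so the $9$-th power map is a bijection on this group) and $k\in\{4,8\}$ (where $v_2(k)\leq k-2$ forces both $r=k$ and $r=2k$ to produce the same gcd $2^{v_2(k)}$). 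Substituting $|Z(p^{2k})|=p^k|Z(p^k)|$ rewrites the local ratio as $1+1/p^k+1/(p^k|Z(p^k)|)$.

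The upper bound then reduces to a term-by-term comparison with the Euler factors of $\mathcal{Z}_k=\prod_p(1+1/|Z(p^k)|)$: the pointwise inequality $1/p^k+1/(p^k|Z(p^k)|)\leq 1/|Z(p^k)|$ is equivalent to $|Z(p^k)|+1\leq p^k$, which holds since $|Z(p^k)|\leq p^{k-1}(p-1)<p^k$. Taking the product gives $|\mathbb{Z}_{W_2}^{(k)}|/|Z(W_2)|\leq\prod_{p\leq w}(1+1/|Z(p^k)|)\leq\mathcal{Z}_k\leq(1+\epsilon)\mathcal{Z}_k$. For the lower bound the trivial estimate $1+1/p^k+1/(p^k|Z(p^k)|)\geq 1+1/|Z(p^{2k})|$ yields a partial product that converges to $\mathcal{T}_k=\prod_p(1+1/|Z(p^{2k})|)$ as $w\to\infty$ (equivalently $n_0\to\infty$, since $w=\log\log\log n_0$), hence exceeds $(1-\epsilon)\mathcal{T}_k$ once $n_0$ is sufficiently large. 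The main obstacle is the $p=2$ case of the identity $|Z(p^{2k})|=p^k|Z(p^k)|$, since the non-cyclic structure of $(\mathbb{Z}/2^r)^*$ would break it (as happens for $k=2$); this is exactly why Section 4 is restricted to $k\in\{4,8,9\}$ and why the exponent $2k$ in the modulus $W_2$ is the natural choice for these three values.
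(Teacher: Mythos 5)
Your proof is correct and follows essentially the same route as the paper: both reduce to local factors via the CRT, compute $|\mathbb{Z}_{p^{2k}}^{(k)}|=1+|Z(p^{k})|+|Z(p^{2k})|$, and sandwich the local ratio between $1+1/|Z(p^{2k})|$ and $1+1/|Z(p^{k})|$, whose partial products over $p\le w$ tend to $\mathcal{T}_{k}$ and stay below $\mathcal{Z}_{k}$; the identity $|Z(p^{2k})|=p^{k}|Z(p^{k})|$ that you state explicitly is also what the paper uses, just folded into the algebraic simplification of the local factor of $\big(|\mathbb{Z}_{W_{2}}^{(k)}|/|Z(W_{2})|\big)\big/\big(|\mathbb{Z}_{W_{1}}^{(k)}|/|Z(W_{1})|\big)$. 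One caveat on your closing remark: the restriction to $k\in\{4,8,9\}$ and the choice of $W_{2}$ are not driven by this identity --- the paper's Remark explains that $W_{2}$ is needed because the pseudorandomness of the prime-power majorant $\nu'_{b}$ could only be proved with the $2k$-exponent W-trick when $k\in\{2,4,8,9\}$, and $k=2$ is then excluded because the density hypotheses of Theorems 1.5--1.8 are infeasible at $k=2$, not because of the behaviour of $|Z(2^{2k})|/|Z(2^{k})|$.
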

\begin{proof}
	Note that
	\begin{center}
		$\begin{aligned}
	\dfrac{|\mathbb{Z}_{W_{2}}^{(k)}|}{|Z(W_{2})|}&=\prod\limits_{2\leq p\leq \omega}\dfrac{|\mathbb{Z}_{p^{2k}}^{(k)}|}{|Z(p^{2k})|}\\
	&=\dfrac{1+\frac{2^{2k-1}}{(k,2)}\big(\frac{1}{(k,2^{2k-2})}+\frac{1}{2^{k}(k,2^{k-2})}\big)}{2^{2k-1}/(k,2)(k,2^{2k-2})}\prod\limits_{2<p\leq \omega}\dfrac{1+\frac{p^{2k-1}(p-1)}{(k,p-1)}\big(\frac{1}{(k,p^{2k-1})}+\frac{1}{p^{k}(k,p^{k-1})}\big)}{p^{2k-1}(p-1)/(k,p-1)(k,p^{2k-1})}\\
	&>\dfrac{1+\frac{2^{2k-1}}{(k,2)(k,2^{2k-2})}}{2^{2k-1}/(k,2)(k,2^{2k-2})}\prod\limits_{2<p\leq \omega}\dfrac{1+\frac{p^{2k-1}(p-1)}{(k,p-1)(k,p^{2k-1})}}{p^{2k-1}(p-1)/(k,p-1)(k,p^{2k-1})}\\
	&=\dfrac{1+|Z(2^{2k})|}{|Z(2^{2k})|}\prod\limits_{2<p\leq \omega}\dfrac{1+|Z(p^{2k})|}{|Z(p^{2k})|}\\
	&=\prod\limits_{2\leq p\leq \omega}\dfrac{1+|Z(p^{2k})|}{|Z(p^{2k})|}\\
	&=\prod\limits_{p|W_{2}}\big(1+1/|Z(p^{2k})|\big)\\
	&=\sum\limits_{d|W_{2}^{1/2k}}\dfrac{1}{Z(d^{2k})}.
		\end{aligned}  $
	\end{center}
and\begin{center}
	$ \begin{aligned}
	\lim\limits_{\omega\rightarrow\infty}\sum\limits_{d|W_{2}^{1/2k}}\dfrac{1}{Z(d^{2k})}=\sum\limits_{\substack{n=1\\n \ square-free}}^{\infty}\dfrac{1}{|Z(n^{2k})|}.
	\end{aligned} $
\end{center}	
Therefore, the left side of the inequality has been established. On the other hand, \begin{center}
	$ \dfrac{|\mathbb{Z}_{W_{2}}^{(k)}|}{|Z(W_{2})|}=\dfrac{|\mathbb{Z}_{W_{1}}^{(k)}|}{|Z(W_{1})|}\cdot\dfrac{|\mathbb{Z}_{W_{2}}^{(k)}|}{|\mathbb{Z}_{W_{1}}^{(k)}|}\cdot\dfrac{|Z(W_{1})|}{|Z(W_{2})|} $.
\end{center}	

Next, we handle $ \dfrac{|\mathbb{Z}_{W_{2}}^{(k)}|}{|\mathbb{Z}_{W_{1}}^{(k)}|} \cdot \dfrac{|Z(W_{1})|}{|Z(W_{2})|} $ .	
\begin{center}
	$ \begin{aligned}
	\dfrac{|\mathbb{Z}_{W_{2}}^{(k)}|}{|\mathbb{Z}_{W_{1}}^{(k)}|} \cdot \dfrac{|Z(W_{1})|}{|Z(W_{2})|}&=\prod\limits_{p|W_{2}}\dfrac{|\mathbb{Z}_{p^{2k}}^{(k)}|}{|\mathbb{Z}_{p^{k}}^{(k)}|}\cdot  \dfrac{|Z(p^{k})|}{|Z(p^{2k})|}\\
	&=\dfrac{1+\frac{2^{2k-1}}{(k,2)}\big(\frac{1}{(k,2^{2k-2})}+\frac{1}{2^{k}(k,2^{k-2})}\big)}{1+\frac{2^{k-1}}{(k,2)(k,2^{k-2})}}\prod\limits_{2<p\leq \omega}\dfrac{1+\frac{p^{2k-1}(p-1)}{(k,p-1)}\big(\frac{1}{(k,p^{2k-1})}+\frac{1}{p^{k}(k,p^{k-1})}\big)}{1+\frac{p^{k-1}(p-1)}{(k,p-1)(k,p^{k-1})}}\\
	&\times \dfrac{2^{k-1}/(k,2)(k,2^{k-2})}{2^{2k-1}/(k,2)(k,2^{2k-2})}\prod\limits_{2<p\leq \omega}\dfrac{p^{k-1}(p-1)/(k,p-1)(k,p^{k-1})}{p^{2k-1}(p-1)/(k,p-1)(k,p^{2k-1})}\\
	&=\bigg(\dfrac{1}{2^{k}}+\dfrac{2^{k-1}}{(k,2)(k,2^{k-2})+2^{k-1}}\bigg)\prod\limits_{2<p\leq \omega}\bigg(\dfrac{1}{p^{k}}+\dfrac{p^{k-1}(p-1)}{(k,p-1)(k,p^{k-1})+p^{k-1}(p-1)}\bigg).
	\end{aligned} $
\end{center}	
Note that \begin{center}
	$ \dfrac{1}{2^{k}}+\dfrac{2^{k-1}}{(k,2)(k,2^{k-2})+2^{k-1}}<1 $
\end{center}	
and
\begin{center}
	$ \dfrac{1}{p^{k}}+\dfrac{p^{k-1}(p-1)}{(k,p-1)(k,p^{k-1})+p^{k-1}(p-1)}<1 $
\end{center}	
for any $ 2<p\leq \omega $.	Therefore, \begin{center}
	$ \dfrac{|\mathbb{Z}_{W_{2}}^{(k)}|}{|Z(W_{2})|}<\dfrac{|\mathbb{Z}_{W_{1}}^{(k)}|}{|Z(W_{1})|} $.
\end{center}
The right side of the inequality follows readily.
\end{proof}

\begin{lemma}\label{lem 4.3}
	Let $ \epsilon \in (0,1) $. Then \begin{center}
		$ \mathbb{E}_{b\in \mathbb{Z}_{W_{2}}^{(k)}}g_{2}(b,N)\geq (1-\epsilon)\delta_{B}^{k} $
	\end{center}
provided that $ N $ is large enough.
\end{lemma}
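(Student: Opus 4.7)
The strategy is to unfold the expectation, evaluate its unrestricted analogue, and apply the density hypothesis on $B$ through a rearrangement argument.

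Interchanging the order of summation in the definitions of $g_2$ and $f_b''$ yields
\[
\mathbb{E}_{b\in\mathbb{Z}_{W_2}^{(k)}}g_2(b,N)=\frac{1}{|\mathbb{Z}_{W_2}^{(k)}|\,N}\sum_{\substack{t\geq 1,\ t^k\in B\\ W_2<t^k\leq W_2(N+1)}}\frac{k\,t^{k-1}}{\sigma_2(t^k\bmod W_2)},
\]
since each admissible $t$ corresponds to a unique pair $(n,b)\in[N]\times\mathbb{Z}_{W_2}^{(k)}$ with $W_2n+b=t^k$. Setting $T:=\lfloor(W_2N)^{1/k}\rfloor$, partitioning by $r=t\bmod W_2$, and using $\sum_{t\leq T,\,t\equiv r}kt^{k-1}\sim T^k/W_2=N$ together with the identity $\sum_{r\in\mathbb{Z}_{W_2}}1/\sigma_2(r^k\bmod W_2)=|\mathbb{Z}_{W_2}^{(k)}|$ (each $b\in\mathbb{Z}_{W_2}^{(k)}$ has exactly $\sigma_2(b)$ preimages under $r\mapsto r^k$), the unrestricted analogue evaluates asymptotically to $(1+o(1))|\mathbb{Z}_{W_2}^{(k)}|\,N$.

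To incorporate the restriction $t^k\in B$, I use the density hypothesis $|\{t\leq T:t^k\in B\}|\geq(\delta_B-o(1))T$. Since $k t^{k-1}$ is increasing, a rearrangement shows that $\sum_{t\in S}k t^{k-1}$ is minimised, over $S\subseteq[T]$ of size $\geq\delta_B T$, when $S$ is the initial segment $\{1,2,\dots,\lfloor\delta_B T\rfloor\}$, yielding $\sum_{t\leq T,\,t^k\in B}k t^{k-1}\geq(1-o(1))\delta_B^k W_2 N$. Combined with a careful partition-by-residue argument that tracks the weights $1/\sigma_2(t^k\bmod W_2)$---whose average on $[T]$ equals $|\mathbb{Z}_{W_2}^{(k)}|/W_2$ by the identity above---this sharpens to $\sum_{t\leq T,\,t^k\in B}k t^{k-1}/\sigma_2(t^k\bmod W_2)\geq(1-\epsilon)\delta_B^k\,|\mathbb{Z}_{W_2}^{(k)}|\,N$, and dividing by $|\mathbb{Z}_{W_2}^{(k)}|\,N$ delivers the claim.

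\textbf{Main obstacle.} The weight $1/\sigma_2(t^k\bmod W_2)$ is highly non-uniform on $\mathbb{Z}_{W_2}^{(k)}$ (equal to the constant $|Z(W_2)|/\phi(W_2)$ on $Z(W_2)$, but as small as $W_2^{-(k-1)/k}$ at $b=0$), so a na\"ive per-residue rearrangement combined with H\"older only matches rather than exceeds $\delta_B^k|\mathbb{Z}_{W_2}^{(k)}|$. I expect to bridge the gap by splitting the $t$-range according to $d:=\gcd(t,\prod_{p\leq w}p)$: for each $d$ the weight becomes monotone on the corresponding sub-progression, the density hypothesis for $B$ transfers to each stratum up to a sieve-type loss that is $o(1)$ since $w=\log\log\log n_0\to\infty$, and summing the stratified bounds produces the clean $\delta_B^k$ constant.
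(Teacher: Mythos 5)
The paper does not supply its own proof here; it simply cites \cite[proof of Lemma 6.1]{Sal}, so a direct line-by-line comparison is not possible. Evaluated on its own terms, your sketch gets the framework right but has a genuine gap at exactly the point you flag.

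What is correct: the unfolding
\[
\mathbb{E}_{b\in\mathbb{Z}_{W_2}^{(k)}}g_2(b,N)=\frac{1}{|\mathbb{Z}_{W_2}^{(k)}|\,N}\sum_{\substack{t^k\in B\\ W_2<t^k\le W_2(N+1)}}\frac{k\,t^{k-1}}{\sigma_2(t^k\bmod W_2)},
\]
the identity $\sum_{r\in[W_2]}1/\sigma_2(r^k\bmod W_2)=|\mathbb{Z}_{W_2}^{(k)}|$, the evaluation of the unrestricted sum as $(1+o(1))|\mathbb{Z}_{W_2}^{(k)}|N$, and the rearrangement bound $\sum_{t\le T,\,t^k\in B}kt^{k-1}\ge(1-o(1))\delta_B^k T^k$. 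These are all sound, and you correctly identify the obstacle: the arithmetic weight $1/\sigma_2(t^k\bmod W_2)$ is far from uniform.

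The bridge, however, is not a proof but a hope, and as stated it would not close the gap for two reasons. First, stratifying by $d=\gcd(t,\prod_{p\le w}p)$ does not make the weight constant on a stratum: $\sigma_2$ depends on $\min(v_p(t),2)$ for each $p\mid W_2$ (the $p$-local factor jumps again between $v_p(t)=1$ and $v_p(t)\ge 2$), so you would at least need to stratify by $\gcd(t,\prod_{p\le w}p^2)$, and even then you must track the residue of $t$ modulo the coprime part. Second, and more seriously, nothing in the hypothesis $\underline\delta_B(\mathbb{N}^{(k)})\ge\delta_B$ controls the density of $\tilde B=\{t:t^k\in B\}$ inside a given stratum. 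Writing $c_d$ for the density of stratum $d$, $\sigma_d$ for its (constant) value of $\sigma_2$, and $\beta_d$ for the density of $\tilde B$ within it, your stratified rearrangement yields a lower bound of roughly $T^k\sum_d (c_d/\sigma_d)\beta_d^k$ subject only to $\sum_d c_d\beta_d\ge\delta_B$. Minimising this convex objective by Lagrange multipliers gives $\beta_d\propto\sigma_d^{1/(k-1)}$ and a minimum value $\delta_B^k/\bigl(\sum_d c_d\sigma_d^{1/(k-1)}\bigr)^{k-1}$, and H\"older (with exponents $k/(k-1)$ and $k$ applied to $1=\sum_d c_d\sigma_d^{1/k}\sigma_d^{-1/k}$) shows $\bigl(\sum_d c_d\sigma_d^{1/(k-1)}\bigr)^{k-1}\ge W_2/|\mathbb{Z}_{W_2}^{(k)}|$, with equality only when $\sigma_d$ is constant. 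In other words, the per-stratum rearrangement bound is strictly weaker than the target $\delta_B^k|\mathbb{Z}_{W_2}^{(k)}|/W_2$ whenever $\sigma_2$ is non-constant, so the argument must use something beyond an unconstrained stratum-by-stratum count (for instance the hard constraint $\beta_d\le 1$ together with the fact that low-weight strata have vanishing total measure). Your appeal to a "sieve-type loss that is $o(1)$" does not supply this; it is precisely the missing step.

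So the proposal is an honest partial reconstruction, not a complete proof: the setup and the rearrangement are right, but the handling of the non-uniform $\sigma_2$-weight is asserted rather than established, and the specific stratification proposed does not on its own give the constant $\delta_B^k$.
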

\begin{proof}
See \cite[proof of Lemma 6.1]{Sal}.
\end{proof}

Using the previous lemmas, we have the following result.
\begin{lemma}\label{lem 4.4}
	Let $ \epsilon\in (0,1) $. Then \begin{center}
		$ \mathbb{E}_{b\in Z(W_{2})}g_{2}(b,N)\geq(1-\epsilon)(\mathcal{T}_{k}\delta_{B}^{k}-\mathcal{Z}_{k}+1) $
	\end{center}
provided that $ N $ is large enough.
\end{lemma}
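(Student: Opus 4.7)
The strategy mirrors the proof of Lemma \ref{lem 3.3} (i.e.\ Salmensuu's Lemma 6.2), with the clean asymptotic $|\mathbb{Z}_{W_1}^{(k)}|/|Z(W_1)|\to\mathcal{Z}_k$ replaced by the two-sided estimate of Lemma \ref{lem 4.2}. The starting point is the set-theoretic inclusion $Z(W_2)\subseteq \mathbb{Z}_{W_2}^{(k)}$, which gives
\begin{equation*}
|Z(W_2)|\,\mathbb{E}_{b\in Z(W_2)} g_2(b,N) = |\mathbb{Z}_{W_2}^{(k)}|\,\mathbb{E}_{b\in \mathbb{Z}_{W_2}^{(k)}} g_2(b,N) - \sum_{b\in \mathbb{Z}_{W_2}^{(k)}\setminus Z(W_2)} g_2(b,N),
\end{equation*}
where $f''_b$ and $g_2(b,N)$ are extended to all $b\in \mathbb{Z}_{W_2}^{(k)}$ by the same defining formula (this extension is implicit in Lemma \ref{lem 4.3}).

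For the main term, I divide by $|Z(W_2)|$ and combine the lower bound $|\mathbb{Z}_{W_2}^{(k)}|/|Z(W_2)|\geq (1-\epsilon)\mathcal{T}_k$ from Lemma \ref{lem 4.2} with Lemma \ref{lem 4.3} to obtain
\begin{equation*}
\frac{|\mathbb{Z}_{W_2}^{(k)}|}{|Z(W_2)|}\,\mathbb{E}_{b\in \mathbb{Z}_{W_2}^{(k)}} g_2(b,N) \geq (1-\epsilon)^{2}\,\mathcal{T}_k\,\delta_B^{k}.
\end{equation*}

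For the error term, I use $g_2(b,N)\leq \mathbb{E}_{n\in[N]}\nu''_b(n)$ pointwise and verify the uniform asymptotic $\mathbb{E}_{n\in[N]}\nu''_b(n)=1+o(1)$ for every $b\in \mathbb{Z}_{W_2}^{(k)}$. This is the main technical point: writing
\begin{equation*}
\mathbb{E}_{n\in[N]}\nu''_b(n) = \frac{1}{N\sigma_2(b)}\sum_{\substack{W_2<t^k\leq W_2(N+1)\\ t^k\equiv b\,(\bmod\,W_2)}} k t^{k-1},
\end{equation*}
one uses that the count of $t\leq T$ with $t^k\equiv b\,(\bmod\,W_2)$ is $\sigma_2(b)T/W_2+O(\sigma_2(b))$ (by CRT/periodicity), so partial summation yields $\sigma_2(b)N\bigl(1+O(W_2^{(k-1)/k}N^{-1/k})\bigr)$. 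Since $W_2=o(\log N)$, the error is $o(1)$ uniformly in $b$, even in the delicate non-coprime case where $\sigma_2(b)$ can be as large as $W_2^{(k-1)/k}$. Combined with the upper bound in Lemma \ref{lem 4.2}, this gives
\begin{equation*}
\frac{1}{|Z(W_2)|}\sum_{b\in \mathbb{Z}_{W_2}^{(k)}\setminus Z(W_2)} g_2(b,N)\leq (1+\epsilon)\!\left(\frac{|\mathbb{Z}_{W_2}^{(k)}|}{|Z(W_2)|}-1\right)\leq (1+\epsilon)\bigl((1+\epsilon)\mathcal{Z}_k-1\bigr).
\end{equation*}

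Subtracting and collecting the $\epsilon$-dependent terms yields $\mathbb{E}_{b\in Z(W_2)}g_2(b,N)\geq \mathcal{T}_k\delta_B^{k}-\mathcal{Z}_k+1-O(\epsilon)$, and after replacing $\epsilon$ by a suitably small constant (noting that $\mathcal{T}_k\delta_B^k-\mathcal{Z}_k+1>0$ is needed only for the conclusion to be non-trivial and the stated inequality is vacuous otherwise) this gives the claimed bound $(1-\epsilon')(\mathcal{T}_k\delta_B^{k}-\mathcal{Z}_k+1)$. The hard step is the uniform asymptotic for $\mathbb{E}_{n\in[N]}\nu''_b(n)$ in the non-coprime case: the wide range of values $\sigma_2(b)$ can take (depending on the $p$-adic valuations of $b$ for $p\mid W_2$) makes the bookkeeping awkward, but the factor $\sigma_2(b)$ appears on both sides of the equation and cancels, leaving an error governed solely by the smallness of $W_2$ relative to $N$.
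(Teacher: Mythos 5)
Your proof is correct and follows essentially the same route as the paper: the same decomposition $|Z(W_2)|\mathbb{E}_{b\in Z(W_2)}g_2 = |\mathbb{Z}_{W_2}^{(k)}|\mathbb{E}_{b\in \mathbb{Z}_{W_2}^{(k)}}g_2 - \sum_{b\notin Z(W_2)}g_2$, the lower bound from Lemma \ref{lem 4.2} together with Lemma \ref{lem 4.3} for the main term, and the pointwise bound $g_2(b,N)\leq 1+o(1)$ together with the upper bound from Lemma \ref{lem 4.2} for the error term. The paper simply asserts ``Note that $g_2(b,N)\leq 1+o(1)$'' without further comment, whereas you spell out the partial-summation argument (and the cancellation of $\sigma_2(b)$) that makes this uniform over non-coprime $b$ — a useful addition, but not a different approach.
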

\begin{proof}
Note that $ g_{2}(b,N)\leq 1+o(1) $. By Lemma \ref{lem 4.2} and Lemma \ref{lem 4.3}, we have \begin{center}
	$ \begin{aligned}
	 \mathbb{E}_{b\in Z(W_{2})}g_{2}(b,N)&=\dfrac{|\mathbb{Z}_{W_{2}}^{(k)}|}{|Z(W_{2})|}\mathbb{E}_{b\in Z_{W_{2}}^{(k)}}g_{2}(b,N)-\dfrac{1}{|Z(W_{2})|}\sum\limits_{\substack{b\in Z_{W_{2}}^{(k)}\\(b,W_{2})>1}}g_{2}(b,N)\\
	 &\geq (1-o(1))\mathcal{T}_{k}\mathbb{E}_{b\in Z_{W_{2}}^{(k)}}g_{2}(b,N)-(1+o(1))\mathcal{Z}_{k}+1\\
	 &\geq (1-o(1))(\mathcal{T}_{k}\delta_{B}^{k}-\mathcal{Z}_{k}+1).
	\end{aligned} $
\end{center}
\end{proof}

Regarding the lower bounds of $ \mathbb{E}_{b \in Z(W_{2})}g_{1}(b,N) $ and $ \mathbb{E}_{b \in Z(W_{2})}\mathbf{g}(b,N) $, we have the following two results.
\begin{lemma}\label{lem 4.5}(\cite[Lemma 4.7]{Gao})
	Let $ \epsilon \in (0,1) $. Then
	\begin{center}
		$ \mathbb{E}_{b \in Z(W_{2})}g_{1}(b,N) \geq k\delta_{A}-(k-1)-\epsilon $
	\end{center}
	provided that $ N $ is large enough depending on $ \epsilon $.	
\end{lemma}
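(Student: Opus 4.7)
The plan is to unfold $\mathbb{E}_{b \in Z(W_2)} g_1(b,N)$ into a single sum over primes $p \in A$ and then lower bound that sum by Abel summation. The starting point is an algebraic observation that trivialises the weight $1/\sigma_2(b)$: since $z \mapsto z^k$ is an endomorphism of the abelian group $(\mathbb{Z}/W_2\mathbb{Z})^*$ and $b \in Z(W_2)$ is by definition in its image, the number of $k$-th roots of $b$ (which is what $\sigma_2(b)$ counts, as $(b,W_2)=1$ forces $(z,W_2)=1$) equals the size $D$ of the kernel of this map. Hence $\sigma_2(b) = D = \varphi(W_2)/|Z(W_2)|$ is independent of $b \in Z(W_2)$, and the prefactor $\varphi(W_2)/(W_2\sigma_2(b))$ collapses to $|Z(W_2)|/W_2$.

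With this identity in hand, I would expand
\[
\mathbb{E}_{b \in Z(W_2)} g_1(b,N) = \frac{1}{|Z(W_2)| N} \sum_{b \in Z(W_2)} \sum_{n \in [N]} f'_b(n) = \frac{k}{W_2 N} \sum_{\substack{p \in A,\ p > w \\ p^k \leq W_2 N + W_2}} p^{k-1} \log p,
\]
where each non-zero term of the double sum corresponds to the unique prime $p \in A$ with $p^k = W_2 n + b$ (the condition $p > w$ coming from $(b,W_2) = 1 \Rightarrow (p,W_2) = 1$). Setting $X := (W_2 N + W_2)^{1/k}$, the right-hand side equals $(k/(W_2 N)) \cdot S_A(X) + o(1)$, where $S_A(X) := \sum_{p \in A,\ p \leq X} p^{k-1} \log p$.

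It then remains to lower bound $S_A(X)$. From $\underline{\delta}_A(\mathbb{P}) = \delta_A$ and the prime number theorem one has $\theta_{\mathbb{P} \setminus A}(X) \leq \pi_{\mathbb{P} \setminus A}(X) \log X \leq (1 - \delta_A + \epsilon) X (1+o(1))$, whence $\theta_A(X) \geq (\delta_A - 2\epsilon) X$ for $X$ large. Partial summation then yields
\[
S_A(X) = X^{k-1} \theta_A(X) - (k-1) \int_2^X t^{k-2} \theta_A(t) \, dt \geq (\delta_A - 2\epsilon) X^k - \tfrac{k-1}{k} X^k (1 + o(1)),
\]
where the subtracted integral is upper bounded via $\theta_A(t) \leq \theta(t) \sim t$. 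Rearranging gives $S_A(X) \geq ((k\delta_A - (k-1))/k - 3\epsilon) X^k$, and since $X^k \sim W_2 N$ one concludes $\mathbb{E}_{b \in Z(W_2)} g_1(b,N) \geq k\delta_A - (k-1) - O(\epsilon)$, after which the $\epsilon$ is rescaled.

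The main conceptual hurdle is the temptation to treat the $1/\sigma_2(b_p)$ weight non-trivially: a naive estimate would weight distinct residues $r \in (\mathbb{Z}/W_2\mathbb{Z})^*$ differently and, without per-residue control of $A$ modulo $W_2$ (which the density hypothesis does not supply), would introduce a spurious factor of $\varphi(W_2)/|Z(W_2)|$ that diverges as $w \to \infty$. The algebraic collapse to $\sigma_2(b) = D$ is precisely what averts this, reducing the estimate to Abel summation against the global bound $\theta_A(X) \geq (\delta_A - 2\epsilon) X$. The $(k-1)/k$ loss in the partial summation integral is what forces the bound $k\delta_A - (k-1)$ rather than the ``uniform'' value $\delta_A$, and this gap is attained in the limit when the primes of $A$ below $X$ accumulate near $0$.
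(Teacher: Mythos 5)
Your argument is correct and is the expected route to this estimate: the observation that $\sigma_2(b)=\varphi(W_2)/|Z(W_2)|$ is constant on $Z(W_2)$ (fibers of the $k$-th power endomorphism of $(\mathbb{Z}/W_2\mathbb{Z})^\ast$ all have size $|\ker|$) collapses the $b$-average into a single weighted sum $\frac{k}{W_2N}\sum_{p\in A,\ p^k\le W_2N+W_2,\ (p,W_2)=1}p^{k-1}\log p$, after which partial summation against $\theta_A(X)\ge(\delta_A-O(\epsilon))X$ and $\theta_A(t)\le\theta(t)\sim t$ yields exactly $k\delta_A-(k-1)-O(\epsilon)$. The paper itself supplies no proof and simply cites \cite[Lemma 4.7]{Gao}, so a line-by-line comparison is not possible here, but your reconstruction is the natural one behind a bound of precisely this shape and I see no gap.
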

\begin{lemma}\label{lem 4.6}(\cite[Lemma 4.8]{Gao})
	Let $ \epsilon \in (0,1) $. Then
	\begin{center}
		$ \mathbb{E}_{b \in Z(W_{2})}\mathbf{g}(b,N) \geq (1-\epsilon)\delta $
	\end{center}
	provided that $ N $ is large enough depending on $ \epsilon $.	
\end{lemma}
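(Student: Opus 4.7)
The strategy is to unfold the expression into a weighted sum over primes $p \in \mathcal{P}$ and then apply the density hypothesis $\pi_{\mathcal{P}}(x) \sim \delta \pi(x)$ together with partial summation. Writing out the definitions,
\[
|Z(W_2)|\, N \cdot \mathbb{E}_{b \in Z(W_2)} \mathbf{g}(b,N)
\;=\; \sum_{b \in Z(W_2)} \sum_{n \in [N]} \mathbf{f}_b(n)
\;=\; \frac{\varphi(W_2)}{W_2} \sum_{\substack{p \in \mathcal{P},\ p>w \\ p^k \in (W_2,\, W_2 N + W_2]}} \frac{k p^{k-1} \log p}{\sigma_2(p^k \bmod W_2)},
\]
since $p > w$ forces $(p,W_2)=1$, so $p^k \bmod W_2$ automatically lies in $Z(W_2)$ and yields a unique admissible pair $(b,n)$ per such prime.

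The key simplification is that $\sigma_2(b)$ is \emph{constant} on $Z(W_2)$: the map $z \mapsto z^k$ is a group homomorphism on the abelian group $(\mathbb{Z}/W_2\mathbb{Z})^{*}$, so every non-empty fibre is a coset of the kernel and has the common size $\sigma := \varphi(W_2)/|Z(W_2)|$. Substituting $\sigma_2 \equiv \sigma$ and using $\varphi(W_2)/\sigma = |Z(W_2)|$, the identity above reduces to
\[
\mathbb{E}_{b \in Z(W_2)} \mathbf{g}(b,N) \;=\; \frac{1}{W_2 N} \sum_{\substack{p \in \mathcal{P} \\ w < p \leq (W_2(N+1))^{1/k}}} k p^{k-1} \log p \;+\; o(1).
\]

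From $\pi_{\mathcal{P}}(x) \sim \delta \pi(x)$ one obtains $\theta_{\mathcal{P}}(x) := \sum_{p \leq x,\, p \in \mathcal{P}} \log p \sim \delta x$ by partial summation, and a further partial summation with weight $k t^{k-1}$ then gives $\sum_{p \leq X,\, p \in \mathcal{P}} k p^{k-1} \log p = \delta X^k (1+o(1))$. Taking $X = (W_2(N+1))^{1/k}$ produces $\delta W_2 N (1+o(1))$, so $\mathbb{E}_{b \in Z(W_2)} \mathbf{g}(b,N) = \delta + o(1)$, which exceeds $(1-\epsilon)\delta$ once $N$ is sufficiently large depending on $\epsilon$.

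The main technical point worth checking is that the modulus $W_2$ depends on $n_0$ and so grows (slowly) with $N$; however, since $W_2 = o(\log N)$ the evaluation scale $X = (W_2 N)^{1/k}$ tends to infinity, so the asymptotic for $\pi_{\mathcal{P}}$ does apply uniformly. The contribution of primes $p \leq w$ and of boundary effects near the endpoints $p^k \approx W_2$ and $p^k \approx W_2 N$ is at most $O(w^k) + O(W_2)$, which is easily $o(W_2 N)$ and thus harmless. This is the same route Chow uses in \cite[Section 2]{Chow} to establish $\mathbb{E}_{n \in [N]} \nu'_b(n) \sim 1$; the only new input is the density $\delta$ of $\mathcal{P}$, which is introduced only at the very last partial summation step.
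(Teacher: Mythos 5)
Your argument is correct and is essentially the standard Chow-style computation that the paper (via its citation to [Gao, Lemma 4.8]) relies on: unfold $\mathbf{g}$ into a weighted sum over primes, use that $\sigma_2$ is constant on $Z(W_2)$ because $z\mapsto z^k$ is a group endomorphism of $(\mathbb{Z}/W_2\mathbb{Z})^{*}$ whose fibres over its image are kernel cosets, and then evaluate by two partial summations starting from $\pi_{\mathcal{P}}(x)\sim\delta\pi(x)$, noting that $W_2=o(\log N)$ keeps the scale $X=(W_2N)^{1/k}$ tending to infinity. The only minor imprecision is describing the upper-endpoint boundary contribution as $O(W_2)$ — it is really of size $O((W_2N)^{(k-1)/k}\log N)$ — but this is still $o(W_2N)$, so the conclusion $\mathbb{E}_{b\in Z(W_2)}\mathbf{g}(b,N)=\delta+o(1)\geq(1-\epsilon)\delta$ stands.
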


Similar to the proofs of Proposition \ref{pro 3.5}-\ref{pro 3.8}, we can establish the following four results by utilizing Lemma \ref{lem 4.1} and Lemma \ref{lem 4.4}-\ref{lem 4.6}. Note that the conclusions of Proposition \ref{pro 4.8} and Proposition \ref{pro 4.10} do not hold for $ k=4 $.
\begin{proposition}\label{pro 4.7}
	Let $ \epsilon \in (0,1/4) $ and let $ N $ be sufficiently large depending on $ \epsilon $. Let $ \delta_{A}>1-1/2k+2\epsilon/k,\  k\delta_{A}+\mathcal{T}_{k}\delta_{B}^{k}>\mathcal{Z}_{k}+k-1+4\epsilon $ and $ s_{2}\geq 1 ,\ s_{1}\geq 16k\omega(k)+4k+4+s_{2} $. Then, for all $ n\in \mathbb{Z}_{W_{2}} $ with $  n \equiv s_{1}+s_{2} \ (\bmod \ R_{k}) $, there exist $ b_{1},\ldots,b_{s_{1}},b_{s_{1}+1},\ldots,b_{s_{1}+s_{2}} \in Z(W_{2}) $ such that: \\
	(i) $ n\equiv b_{1}+\cdots+b_{s_{1}}+b_{s_{1}+1}+\cdots+b_{s_{1}+s_{2}} (\bmod\ W_{2})$ ;\\
	(ii) $\ g_{1}(b_{i},N)>\epsilon /2 $ for all $ i \in \{1,\ldots,s_{1}\} $ and $\ g_{2}(b_{j},N)>\epsilon/2 $ for all $ j \in \{s_{1}+1,\ldots,s_{1}+s_{2}\} $;\\
	(iii)
	$ g_{1}(b_{1},N)+\cdots+g_{1}(b_{s_{1}},N)+g_{2}(b_{s_{1}+1},N)+\cdots+g_{2}(b_{s_{1}+s_{2}},N)>(s_{1}+s_{2})(1+\epsilon)/2. $
\end{proposition}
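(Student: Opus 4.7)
The plan is to mirror the proof of Proposition \ref{pro 3.5}, replacing the $W_1$-tools of Section 3 with the $W_2$-tools of Section 4: Lemma \ref{lem 4.1} in place of Lemma \ref{lem 3.1}, Lemma \ref{lem 4.4} in place of Lemma \ref{lem 3.3}, and Lemma \ref{lem 4.5} in place of Lemma \ref{lem 3.2}. First, in analogy with Section 3.1, I would introduce auxiliary functions $f_1, f_2 : Z(W_2) \to [0,1)$ by $f_i(b) := \max\bigl(0,\ \tfrac{1}{1+\epsilon}(g_i(b,N) - \epsilon/2)\bigr)$. The upper bounds $f'_b \leq \nu'_b$ and $f''_b \leq \nu''_b$, combined with the normalizations $\mathbb{E}_{n\in[N]}\nu'_b(n),\ \mathbb{E}_{n\in[N]}\nu''_b(n) \sim 1$ from Section 4.1, ensure that the $f_i$ indeed take values in $[0,1)$ once $N$ is sufficiently large.

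Next I would translate the two density hypotheses into the mean conditions required by Lemma \ref{lem 4.1}. The first hypothesis $\delta_A > 1 - 1/2k + 2\epsilon/k$ rearranges to $k\delta_A - (k-1) - \epsilon > 1/2 + \epsilon$; combining with Lemma \ref{lem 4.5} yields $\mathbb{E}_{b \in Z(W_2)} f_1(b) \geq \tfrac{1}{1+\epsilon}\bigl(\mathbb{E}_b g_1(b,N) - \epsilon/2\bigr) > 1/2$. The second hypothesis $k\delta_A + \mathcal{T}_k \delta_B^k > \mathcal{Z}_k + k - 1 + 4\epsilon$ rearranges to $(k\delta_A - (k-1) - \epsilon) + (\mathcal{T}_k \delta_B^k - \mathcal{Z}_k + 1 - \epsilon) - \epsilon > 1 + 2\epsilon$; applying Lemmas \ref{lem 4.5} and \ref{lem 4.4} then gives $\mathbb{E}_{b \in Z(W_2)}(f_1(b) + f_2(b)) > 1$. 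This is precisely the spot where $\mathcal{T}_k$ supplants $\mathcal{Z}_k$: Lemma \ref{lem 4.4} is the only genuinely new input relative to the Section 3 argument, and it is what makes the $W_2$-based setup worthwhile when $k \in \{4,8,9\}$.

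With both mean conditions in hand, I would apply Lemma \ref{lem 4.1} with $h_1 = f_1$ and $h_2 = f_2$, producing $b_1, \ldots, b_{s_1+s_2} \in Z(W_2)$ satisfying the congruence condition (i) modulo $W_2$, the positivity $f_1(b_i), f_2(b_j) > 0$, and the sum bound $f_1(b_1)+\cdots+f_1(b_{s_1})+f_2(b_{s_1+1})+\cdots+f_2(b_{s_1+s_2}) > (s_1+s_2)/2$. Unwinding the definition of $f_i$: positivity of $f_i(b_\ell)$ forces $g_i(b_\ell,N) > \epsilon/2$, delivering (ii); and the sum bound rescales to $\sum g_i(b_\ell,N) > (1+\epsilon)(s_1+s_2)/2 + \epsilon(s_1+s_2)/2 > (1+\epsilon)(s_1+s_2)/2$, delivering (iii).

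I do not expect a real obstacle in this proposition itself, since all the substantive work has been packaged into Lemmas \ref{lem 4.1} and \ref{lem 4.4}. The only points requiring mild care are the $\epsilon$-bookkeeping in the two equivalences above, and verifying that the value range $f_i(b) \in [0,1)$ really holds under the $W_2$-normalization (which follows by inspection from the definitions in Section 4.1). Everything else is a direct transcription of the Proposition \ref{pro 3.5} argument.
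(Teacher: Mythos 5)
Your proof is correct and is exactly what the paper intends: the paper omits the proof of Proposition \ref{pro 4.7}, stating only that it follows by repeating the argument of Proposition \ref{pro 3.5} with Lemma \ref{lem 4.1} and Lemmas \ref{lem 4.4}--\ref{lem 4.6} in place of their Section~3 counterparts, and your write-up carries out precisely that substitution with the same $\epsilon$-bookkeeping. The only implicit step worth making explicit is the passage from Lemma \ref{lem 4.4}'s bound $(1-\epsilon)(\mathcal{T}_k\delta_B^k-\mathcal{Z}_k+1)$ to $\mathcal{T}_k\delta_B^k-\mathcal{Z}_k+1-\epsilon$, which uses $\mathcal{T}_k\delta_B^k-\mathcal{Z}_k+1\le 1$ (true since $\mathcal{T}_k<\mathcal{Z}_k$ and $\delta_B\le 1$), exactly as in the paper's own proof of Proposition \ref{pro 3.5}.
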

\begin{proposition}\label{pro 4.8}
	Let $ \epsilon \in (0,1/6) $ and let $ N $ be sufficiently large depending on $ \epsilon $. Let $ \delta_{B}>((\mathcal{Z}_{k}-1/2+3\epsilon)\mathcal{T}_{k}^{-1})^{1/k} ,\  k\delta_{A}+\mathcal{T}_{k}\delta_{B}^{k}>\mathcal{Z}_{k}+k-1+4\epsilon $ and $ s_{2}\geq 1 ,\ s_{1}\geq 16k\omega(k)+4k+4+s_{2} $. Then, for all $ n\in \mathbb{Z}_{W_{2}} $ with $  n \equiv s_{1}+s_{2} \ (\bmod \ R_{k}) $, there exist $ b_{1},\ldots,b_{s_{1}},b_{s_{1}+1},\ldots,b_{s_{1}+s_{2}} \in Z(W_{2}) $ such that: \\
	(i) $ n\equiv b_{1}+\cdots+b_{s_{1}}+b_{s_{1}+1}+\cdots+b_{s_{1}+s_{2}} (\bmod\ W_{2})$ ;\\
	(ii) $\ g_{2}(b_{i},N)>\epsilon /2 $ for all $ i \in \{1,\ldots,s_{1}\} $ and $\ g_{1}(b_{j},N)>\epsilon/2 $ for all $ j \in \{s_{1}+1,\ldots,s_{1}+s_{2}\} $;\\
	(iii)
	$ g_{2}(b_{1},N)+\cdots+g_{2}(b_{s_{1}},N)+g_{1}(b_{s_{1}+1},N)+\cdots+g_{1}(b_{s_{1}+s_{2}},N)>(s_{1}+s_{2})(1+\epsilon)/2. $
\end{proposition}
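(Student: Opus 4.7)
The plan is to mirror the proof of Proposition \ref{pro 3.6}, working in the Section 4 setup: replace $W_1$ with $W_2$ throughout and invoke Lemmas \ref{lem 4.1}, \ref{lem 4.4}, \ref{lem 4.5} in place of Lemmas \ref{lem 3.1}, \ref{lem 3.3}, \ref{lem 3.2}. First I would introduce the transformed weight functions
$$f_i(b) := \max\!\Big(0,\ \tfrac{1}{1+\epsilon}\big(g_i(b,N) - \epsilon/2\big)\Big), \qquad i \in \{1,2\},$$
on $Z(W_2)$, which take values in $[0,1)$ for $N$ sufficiently large by the majorant inequalities $f'_b \leq \nu'_b$, $f''_b \leq \nu''_b$ together with $\mathbb{E}_{n\in[N]}\nu_b \sim 1$.

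Next I would verify the two mean-value hypotheses of Lemma \ref{lem 4.1}, but applied with $h_1 := f_2$ and $h_2 := f_1$ (the roles are swapped because the desired conclusion attaches $g_2$ to the first $s_1$ indices and $g_1$ to the last $s_2$). The density hypothesis $\delta_B > ((\mathcal{Z}_k - 1/2 + 3\epsilon)\mathcal{T}_k^{-1})^{1/k}$ rearranges to $\mathcal{T}_k \delta_B^k - \mathcal{Z}_k + 1 > 1/2 + 3\epsilon$. Lemma \ref{lem 4.4} then gives
$$\mathbb{E}_{b \in Z(W_2)} g_2(b,N) \geq (1-\epsilon)(1/2 + 3\epsilon) > 1/2 + 2\epsilon$$
for $\epsilon < 1/6$, from which $\mathbb{E}_{b \in Z(W_2)} f_2(b) \geq \frac{1}{1+\epsilon}(1/2 + 3\epsilon/2) > 1/2$.

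For the joint mean, I would rewrite the hypothesis $k\delta_A + \mathcal{T}_k \delta_B^k > \mathcal{Z}_k + k - 1 + 4\epsilon$ as $(k\delta_A - (k-1)) + (\mathcal{T}_k \delta_B^k - \mathcal{Z}_k + 1) > 1 + 4\epsilon$. Combining Lemmas \ref{lem 4.4} and \ref{lem 4.5}, and using $\mathcal{T}_k \delta_B^k - \mathcal{Z}_k + 1 \leq 1$ (a consequence of $\delta_B \leq 1$ and $\mathcal{T}_k < \mathcal{Z}_k$, recorded just before Lemma \ref{lem 4.2}) to absorb the $(1-\epsilon)$ factor at cost $O(\epsilon)$, I obtain $\mathbb{E}_{b \in Z(W_2)}(g_1(b,N) + g_2(b,N)) > 1 + 2\epsilon$ and therefore $\mathbb{E}_{b \in Z(W_2)}(f_1(b) + f_2(b)) > \frac{1+\epsilon}{1+\epsilon} = 1$.

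With both mean conditions verified, applying Lemma \ref{lem 4.1} to $(h_1, h_2) = (f_2, f_1)$ produces $b_1, \ldots, b_{s_1+s_2} \in Z(W_2)$ satisfying the congruence (i), positivity of $f_2$ on the first block and $f_1$ on the second, and the sum bound $(s_1+s_2)/2$ for the corresponding $f$-values. Translating back via $g_i(b,N) = (1+\epsilon) f_i(b) + \epsilon/2$ (valid on the support of $f_i$) immediately yields (ii) and, after adding the contribution $(s_1+s_2)\epsilon/2$ from the constant shifts, the sharper sum bound $(s_1+s_2)(1+\epsilon)/2$ in (iii). The proof is essentially bookkeeping in $\epsilon$; the only nontrivial input beyond the Section 4 mean estimates is the bound $\mathcal{T}_k < \mathcal{Z}_k$, which I would quote from the discussion preceding Lemma \ref{lem 4.2}.
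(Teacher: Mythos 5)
Your proposal is correct and follows essentially the same route the paper intends: mirror Proposition \ref{pro 3.6}, apply Lemma \ref{lem 4.1} with $(h_1,h_2)=(f_2,f_1)$, and feed in the mean estimates from Lemmas \ref{lem 4.4} and \ref{lem 4.5}, using $\mathcal{T}_k\delta_B^k-\mathcal{Z}_k+1\le 1$ (via $\delta_B\le 1$ and $\mathcal{T}_k<\mathcal{Z}_k$) to absorb the $(1-\epsilon)$ factor. The $\epsilon$-bookkeeping matches the paper's implicit computation, so nothing is missing.
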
	
\begin{proposition}\label{pro 4.9}
	Let $ \epsilon \in (0,1/6) $ and let $ N $ be sufficiently large depending on $ \epsilon $. Let $ \delta>1/2+3\epsilon,\  \delta+\mathcal{T}_{k}\delta_{B}^{k}>\mathcal{Z}_{k}+4\epsilon $ and $ s_{2}\geq 1 ,\ s_{1}\geq 16k\omega(k)+4k+4+s_{2} $. Then, for all $ n\in \mathbb{Z}_{W_{2}} $ with $  n \equiv s_{1}+s_{2} \ (\bmod \ R_{k}) $, there exist $ b_{1},\ldots,b_{s_{1}},b_{s_{1}+1},\ldots,b_{s_{1}+s_{2}} \in Z(W_{2}) $ such that: \\
	(i) $ n\equiv b_{1}+\cdots+b_{s_{1}}+b_{s_{1}+1}+\cdots+b_{s_{1}+s_{2}} (\bmod\ W_{2})$ ;\\
	(ii) $\ \mathbf{g}(b_{i},N)>\epsilon /2 $ for all $ i \in \{1,\ldots,s_{1}\} $ and $\ g_{2}(b_{j},N)>\epsilon/2 $ for all $ j \in \{s_{1}+1,\ldots,s_{1}+s_{2}\} $;\\
	(iii)
	$ \mathbf{g}(b_{1},N)+\cdots+\mathbf{g}(b_{s_{1}},N)+g_{2}(b_{s_{1}+1},N)+\cdots+g_{2}(b_{s_{1}+s_{2}},N)>(s_{1}+s_{2})(1+\epsilon)/2. $
\end{proposition}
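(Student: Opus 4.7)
The plan is to mirror the proof of Proposition \ref{pro 3.7} verbatim, substituting the $W_2$-versions of the ingredients: Lemma \ref{lem 4.1} in place of Lemma \ref{lem 3.1}, Lemma \ref{lem 4.4} in place of Lemma \ref{lem 3.3}, Lemma \ref{lem 4.6} in place of Lemma \ref{lem 3.4}, and the $Z(W_2)$-averaged truncated functions $\mathbf{f}(b)$ and $f_2(b)$ in place of their $W_1$-analogues (these are defined exactly as in Subsection 3.1 but using $W_2,\sigma_2,\mathbf{g},g_2$). Everything else is a formal rerun of the algebraic rearrangements in Proposition \ref{pro 3.7}.

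First I would verify the single-function mean bound. The hypothesis $\delta>1/2+3\epsilon$ together with Lemma \ref{lem 4.6} gives, for $N$ sufficiently large depending on $\epsilon$,
\[
\mathbb{E}_{b\in Z(W_2)}\mathbf{g}(b,N)\geq (1-\epsilon)\delta>(1-\epsilon)(1/2+3\epsilon)>1/2+2\epsilon,
\]
so the definition of $\mathbf{f}$ yields
\[
\mathbb{E}_{b\in Z(W_2)}\mathbf{f}(b)\geq \frac{1}{1+\epsilon}\bigl(\mathbb{E}_{b\in Z(W_2)}\mathbf{g}(b,N)-\epsilon/2\bigr)>1/2.
\]

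Next I would verify the combined mean bound. Observing that $\delta+\mathcal{T}_k\delta_B^k>\mathcal{Z}_k+4\epsilon$ is equivalent to $\delta+(\mathcal{T}_k\delta_B^k-\mathcal{Z}_k+1)>1+4\epsilon$, I combine Lemma \ref{lem 4.4} with Lemma \ref{lem 4.6} to get
\[
\mathbb{E}_{b\in Z(W_2)}(\mathbf{f}(b)+f_2(b))\geq \frac{(1-\epsilon)(\delta+\mathcal{T}_k\delta_B^k-\mathcal{Z}_k+1)-\epsilon}{1+\epsilon}>1.
\]
With these two estimates in hand I would invoke Lemma \ref{lem 4.1} with $h_1=\mathbf{f}$ and $h_2=f_2$ to produce $b_1,\ldots,b_{s_1+s_2}\in Z(W_2)$ satisfying (i), the strict positivity of $\mathbf{f}(b_i)$ and $f_2(b_j)$, and the strict bound $\mathbf{f}(b_1)+\cdots+\mathbf{f}(b_{s_1})+f_2(b_{s_1+1})+\cdots+f_2(b_{s_1+s_2})>(s_1+s_2)/2$.

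Finally, positivity of $\mathbf{f}(b_i)$ (respectively $f_2(b_j)$) is by definition equivalent to $\mathbf{g}(b_i,N)>\epsilon/2$ (respectively $g_2(b_j,N)>\epsilon/2$), which gives (ii). For (iii) I unfold the definition $\mathbf{f}(b)=\max(0,(\mathbf{g}(b,N)-\epsilon/2)/(1+\epsilon))$ and likewise for $f_2$; multiplying the inequality from Lemma \ref{lem 4.1} by $1+\epsilon$ and adding back the discarded $\epsilon/2$ at each of the $s_1+s_2$ places produces
\[
\mathbf{g}(b_1,N)+\cdots+\mathbf{g}(b_{s_1},N)+g_2(b_{s_1+1},N)+\cdots+g_2(b_{s_1+s_2},N)>\frac{(1+\epsilon)(s_1+s_2)}{2}+\frac{\epsilon(s_1+s_2)}{2},
\]
which in particular exceeds $(s_1+s_2)(1+\epsilon)/2$. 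I do not expect any genuine obstacle: the argument is a mechanical transcription of Proposition \ref{pro 3.7}, and the only place where the change of setting matters is the replacement of $\mathcal{Z}_k\delta_B^k$ by $\mathcal{T}_k\delta_B^k$ inside the density hypothesis, which is precisely what Lemma \ref{lem 4.4} was designed to accommodate.
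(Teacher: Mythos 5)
Your proposal is correct and follows exactly the route the paper intends: the paper does not write out a separate proof of Proposition \ref{pro 4.9}, stating only that Propositions \ref{pro 4.7}--\ref{pro 4.10} follow ``similar to the proofs of Proposition \ref{pro 3.5}--\ref{pro 3.8}'' via Lemma \ref{lem 4.1} and Lemmas \ref{lem 4.4}--\ref{lem 4.6}, and your transcription of the Proposition \ref{pro 3.7} argument with $W_2$, $Z(W_2)$, Lemma \ref{lem 4.1}, Lemma \ref{lem 4.4} and Lemma \ref{lem 4.6} is precisely that, including the correct algebraic checks (e.g.\ $(1-\epsilon)(1/2+3\epsilon)>1/2+2\epsilon$ for $\epsilon<1/6$ and $(1-\epsilon)(1+4\epsilon)-\epsilon>1+\epsilon$ for $\epsilon<1/4$).
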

\begin{proposition}\label{pro 4.10}
	Let $ \epsilon \in (0,1/6) $ and let $ N $ be sufficiently large depending on $ \epsilon $. Let $ \delta_{B}>((\mathcal{Z}_{k}-1/2+3\epsilon)\mathcal{T}_{k}^{-1})^{1/k},\  \delta+\mathcal{T}_{k}\delta_{B}^{k}>\mathcal{Z}_{k}+4\epsilon  $ and $ s_{2}\geq 1 ,\ s_{1}\geq 16k\omega(k)+4k+4+s_{2} $. Then, for all $ n\in \mathbb{Z}_{W_{2}} $ with $  n \equiv s_{1}+s_{2} \ (\bmod \ R_{k}) $, there exist $ b_{1},\ldots,b_{s_{1}},b_{s_{1}+1},\ldots,b_{s_{1}+s_{2}} \in Z(W_{2}) $ such that: \\
	(i) $ n\equiv b_{1}+\cdots+b_{s_{1}}+b_{s_{1}+1}+\cdots+b_{s_{1}+s_{2}} (\bmod\ W_{2})$ ;\\
	(ii) $\ g_{2}(b_{i},N)>\epsilon /2 $ for all $ i \in \{1,\ldots,s_{1}\} $ and $\ \mathbf{g}(b_{j},N)>\epsilon/2 $ for all $ j \in \{s_{1}+1,\ldots,s_{1}+s_{2}\} $;\\
	(iii)
	$ g_{2}(b_{1},N)+\cdots+g_{2}(b_{s_{1}},N)+\mathbf{g}(b_{s_{1}+1},N)+\cdots+\mathbf{g}(b_{s_{1}+s_{2}},N)>(s_{1}+s_{2})(1+\epsilon)/2. $
\end{proposition}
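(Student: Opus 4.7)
The plan is to mirror the proofs of Propositions 3.8 and 4.8 in the $k \in \{4,8,9\}$ setting, combining the density condition on $\delta_B$ (via Lemma \ref{lem 4.4}) with the joint density condition on $\delta$ and $\delta_B$ (via Lemmas \ref{lem 4.4} and \ref{lem 4.6}), and then feeding the resulting averages into Lemma \ref{lem 4.1}.

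First I would introduce the truncated-and-normalized functions
\begin{equation*}
f_2(b) := \max\!\Bigl(0,\tfrac{1}{1+\epsilon}(g_2(b,N)-\epsilon/2)\Bigr), \qquad \mathbf{f}(b) := \max\!\Bigl(0,\tfrac{1}{1+\epsilon}(\mathbf{g}(b,N)-\epsilon/2)\Bigr),
\end{equation*}
both mapping $Z(W_2)$ into $[0,1)$ for $N$ large (since $g_2, \mathbf{g} \le 1+o(1)$, coming from the majorants $\nu''_b, \nu'_b$). The role of $f_2$ here is played by ``$h_1$'' in Lemma \ref{lem 4.1} and $\mathbf{f}$ plays the role of ``$h_2$''.

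Next I would verify the two hypotheses of Lemma \ref{lem 4.1}. The assumption $\delta_B > ((\mathcal{Z}_k - 1/2 + 3\epsilon)\mathcal{T}_k^{-1})^{1/k}$ rearranges to $\mathcal{T}_k\delta_B^k - \mathcal{Z}_k + 1 > 1/2 + 3\epsilon$, so by Lemma \ref{lem 4.4}
\begin{equation*}
\mathbb{E}_{b\in Z(W_2)}g_2(b,N) \ge (1-\epsilon)(1/2+3\epsilon) > 1/2 + 2\epsilon
\end{equation*}
for $\epsilon\in(0,1/6)$, which gives $\mathbb{E}_{b\in Z(W_2)} f_2(b) > 1/2$. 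For the combined condition, the assumption $\delta + \mathcal{T}_k\delta_B^k > \mathcal{Z}_k + 4\epsilon$ rewrites as $\delta + \mathcal{T}_k\delta_B^k - \mathcal{Z}_k + 1 > 1+4\epsilon$, and Lemmas \ref{lem 4.4} and \ref{lem 4.6} then give
\begin{equation*}
\mathbb{E}_{b\in Z(W_2)}(f_2(b)+\mathbf{f}(b)) \;\ge\; \frac{(1-\epsilon)(\delta+\mathcal{T}_k\delta_B^k-\mathcal{Z}_k+1)-\epsilon}{1+\epsilon} \;>\; 1,
\end{equation*}
the last inequality being a short algebraic check valid for $\epsilon<1/6$.

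Finally, Lemma \ref{lem 4.1} yields $b_1,\dots,b_{s_1+s_2}\in Z(W_2)$ with $n\equiv b_1+\cdots+b_{s_1+s_2}\pmod{W_2}$, $f_2(b_i)>0$ for $i\le s_1$, $\mathbf{f}(b_j)>0$ for $j>s_1$, and $\sum_i f_2(b_i)+\sum_j \mathbf{f}(b_j) > (s_1+s_2)/2$. Unpacking the definitions, positivity of $f_2$ and $\mathbf{f}$ at $b$ is exactly $g_2(b,N)>\epsilon/2$ and $\mathbf{g}(b,N)>\epsilon/2$, giving (ii). On the positivity set one has $g_2(b,N)=(1+\epsilon)f_2(b)+\epsilon/2$ and similarly for $\mathbf{g},\mathbf{f}$, so
\begin{equation*}
\sum_{i=1}^{s_1} g_2(b_i,N) + \sum_{j=s_1+1}^{s_1+s_2}\mathbf{g}(b_j,N) = (1+\epsilon)\Bigl(\sum_i f_2(b_i)+\sum_j \mathbf{f}(b_j)\Bigr) + \tfrac{\epsilon(s_1+s_2)}{2} > \tfrac{(1+\epsilon)(s_1+s_2)}{2},
\end{equation*}
giving (iii). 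There is no substantive obstacle here: the one thing to watch is that the $\epsilon$-buffers (the $3\epsilon$ in the $\delta_B$ hypothesis and $4\epsilon$ in the joint hypothesis) are precisely the amounts needed to survive the $(1-\epsilon)/(1+\epsilon)$ losses from Lemmas \ref{lem 4.4}, \ref{lem 4.6} and from the normalization, so with $\epsilon<1/6$ both strict inequalities go through exactly as in the proof of Proposition \ref{pro 3.8}.
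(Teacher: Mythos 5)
Your proof is correct and follows essentially the same route the paper takes: the paper handles Proposition \ref{pro 4.10} by pointing back to the argument for Proposition \ref{pro 3.8}, substituting Lemma \ref{lem 4.1} for Lemma \ref{lem 3.1} and Lemmas \ref{lem 4.4} and \ref{lem 4.6} for Lemmas \ref{lem 3.3} and \ref{lem 3.4}, which is precisely what you do. Your algebraic checks of the $\epsilon$-buffers and the unpacking of the truncated functions $f_2$ and $\mathbf{f}$ to recover conditions (ii) and (iii) match the paper's intended argument.
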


\subsection{Conclusions}
\begin{proposition}\label{pro 4.11}(\cite[Proposition 5.1]{Gao})
	Let $ \alpha \in \mathbb{T} $ . For $ b\in [W_{2}] $ with $ b \in Z(W_{2}) $, we have
	\begin{center}
		$ \mid \widehat{\nu_{b}^{\prime}}(\alpha) - \widehat{1_{[N]}}(\alpha) \mid =o(N). $
	\end{center}
\end{proposition}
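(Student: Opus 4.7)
The plan is to run the standard circle-method pseudorandomness estimate for the normalized prime-power majorant, adapting the proof of Proposition \ref{pro 3.9} to the enlarged W-trick modulus $W_{2}=\prod_{p\leq w}p^{2k}$ in place of $W_{1}$. The motivation for doubling the exponent in the modulus is precisely that for $k\in\{4,8,9\}$ the available Weyl-type bounds for $k$-th-power prime exponential sums are too weak to push the $W_{1}$ version through, whereas the extra factor of $p^{k}$ hidden in $W_{2}$ supplies the room needed on the minor arcs.

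Expanding the definition (\ref{equ6}) and changing variables via $p^{k}=W_{2}n+b$, the Fourier transform becomes
\[
\widehat{\nu_{b}^{\prime}}(\alpha)=\frac{\varphi(W_{2})}{W_{2}\sigma_{2}(b)}\sum_{\substack{p^{k}\leq W_{2}N+b\\ p^{k}\equiv b\,(\bmod\,W_{2})}}kp^{k-1}(\log p)\,e\!\left(\frac{p^{k}-b}{W_{2}}\alpha\right),
\]
so it suffices to show this quantity approaches $\widehat{1_{[N]}}(\alpha)$ uniformly in $b\in Z(W_{2})$ and $\alpha\in\mathbb{T}$. I apply Dirichlet's approximation theorem to $\alpha/W_{2}$ to write it as $a/q+\beta$ with $(a,q)=1$, $q\leq Q$, $|\beta|\leq 1/(qQ)$ for a suitable parameter $Q$ (a small power of $N$), and split $\mathbb{T}$ into major and minor arcs in the usual fashion.

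On the minor arcs I invoke Weyl-type estimates for $\sum_{p\leq P}(\log p)\,e(p^{k}\gamma)$, namely Vinogradov's bound together with the Kumchev--Harman style refinements that cover the exceptional exponents $k\in\{4,8,9\}$; these yield a saving of a positive power of $N$, and the factor $\varphi(W_{2})/(W_{2}\sigma_{2}(b))$ keeps the final bound comfortably $o(N)$. On the major arcs, the Siegel--Walfisz theorem applied in residue classes modulo $qW_{2}$ produces an asymptotic formula, and the combinatorial identity $\#\{c\in(\mathbb{Z}/W_{2}\mathbb{Z})^{\times}:c^{k}\equiv b\,(\bmod\,W_{2})\}=\sigma_{2}(b)$ is exactly what makes the normalization collapse the main term onto $\widehat{1_{[N]}}(\alpha)$ up to a further $o(N)$ error.

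The main obstacle is securing the minor-arc cancellation uniformly for the exceptional exponents $k\in\{4,8,9\}$; this is precisely the reason for enlarging the modulus from $W_{1}$ to $W_{2}$. Uniformity in $b$ is not an issue, because $W_{2}=o(\log N)$ and the Siegel--Walfisz error of shape $N/(\log N)^{A}$ absorbs any polynomial dependence on $W_{2}$. The full verification, which follows Chow's smooth-number approach from \cite[Section 5]{Gao}, is what the author invokes as \cite[Proposition 5.1]{Gao}.
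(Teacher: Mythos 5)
The paper does not actually present a proof of this proposition: the header itself cites \cite[Proposition 5.1]{Gao} and that is the entire content. Your circle-method sketch is structurally the right kind of argument (Dirichlet approximation, major/minor arcs, Siegel--Walfisz on major arcs, prime-power Weyl sums on minor arcs), so there is no objection to the overall shape. However, your stated \emph{rationale} for replacing $W_{1}=\prod_{p\leq w}p^{k}$ by $W_{2}=\prod_{p\leq w}p^{2k}$ is not correct, and it is the only substantive piece of reasoning you add beyond the citation.

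You claim the extra factor $p^{k}$ in $W_{2}$ ``supplies the room needed on the minor arcs'' because Weyl-type bounds are too weak for $k\in\{4,8,9\}$. This cannot be the mechanism. After one extracts the congruence condition $p^{k}\equiv b\ (\bmod\ W)$ by additive characters, the minor-arc contribution is controlled by the same exponential sum $\sum_{p\leq P}p^{k-1}(\log p)e(p^{k}\gamma)$ in either case, and the only $W$-dependence is an innocuous factor of size $O(W)=o(\log N)$ which any fixed power saving $P^{-\rho}$ absorbs. Enlarging $W_{1}$ to $W_{2}$ buys nothing there, and the normalization $\varphi(W_{2})/(W_{2}\sigma_{2}(b))$ does not create minor-arc cancellation---it merely calibrates the main term. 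The genuine obstruction, as the paper's own remark (pointing to \cite[proof of Lemma 5.6]{Gao}) indicates, lives on the \emph{major} arcs: for $k\in\{2,4,8,9\}$ the counts of $k$-th power residues modulo $p^{j}$ (for $p\leq w$ dividing $k$) do not stabilize at exponent $j=k$, so the local factors fail to collapse to $\widehat{1_{[N]}}$; raising the exponent to $2k$ pushes past the Hensel-lifting threshold and restores the clean local computation for all $k\geq2$. One smaller point: referring to the argument in \cite[Section 5]{Gao} as ``Chow's smooth-number approach'' misattributes the method---smooth Weyl sums belong to the integer-$k$-th-power majorant $\nu^{\prime\prime}_{b}$ (Salmensuu), whereas the prime majorant $\nu^{\prime}_{b}$ is handled by a Vinogradov/Kumchev-style prime exponential sum argument.
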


\begin{proposition}\label{pro 4.12}
	Let $ \alpha \in \mathbb{T} $ . For $ b\in [W_{2}] $ with $ b \in Z(W_{2}) $, we have
	\begin{center}
		$ \mid \widehat{\nu_{b}^{\prime\prime}}(\alpha) - \widehat{1_{[N]}}(\alpha) \mid =o(N). $
	\end{center}
\end{proposition}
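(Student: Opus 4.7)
The plan is to adapt the proof of Proposition \ref{pro 3.10} (i.e., \cite[Proposition 4.2]{Sal}) with $W_{2}$ in place of $W_{1}$ throughout. To begin, I would unfold the definition of $\widehat{\nu_{b}^{\prime\prime}}(\alpha)$ and reparametrize by $t$ rather than $n$: since $W_{2}n+b=t^{k}$ forces $t^{k}\equiv b\ (\bmod\ W_{2})$ and $W_{2}+b\leq t^{k}\leq W_{2}N+b$, one obtains
\begin{center}
$ \widehat{\nu_{b}^{\prime\prime}}(\alpha)=\dfrac{k\,e(-b\alpha/W_{2})}{\sigma_{2}(b)}\sum\limits_{\substack{t:\,t^{k}\equiv b\,(\bmod\,W_{2})\\ W_{2}+b\leq t^{k}\leq W_{2}N+b}}t^{k-1}e(t^{k}\alpha/W_{2}). $
\end{center}
Splitting the outer sum by residue class $z\in[W_{2}]$ with $z^{k}\equiv b\ (\bmod\ W_{2})$ (there are exactly $\sigma_{2}(b)$ such $z$) and substituting $t=W_{2}s+z$ reduces the inner sum to a one-variable lattice sum in $s$ with consecutive terms separated by $W_{2}$.

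Next, I would apply partial summation to each of these $\sigma_{2}(b)$ inner sums in order to replace them by the integral $\tfrac{1}{W_{2}}\int x^{k-1}e(x^{k}\alpha/W_{2})\,dx$ taken over the admissible range of $t$. The change of variable $y=x^{k}$ (so that $dy=kx^{k-1}\,dx$) converts this into $\tfrac{1}{kW_{2}}\int e(y\alpha/W_{2})\,dy$ on $y\in[W_{2}+b,W_{2}N+b]$, and a further substitution $u=(y-b)/W_{2}$ yields $\tfrac{1}{k}e(b\alpha/W_{2})\int_{1}^{N}e(u\alpha)\,du$. Summing over the $\sigma_{2}(b)$ admissible residue classes $z$, the prefactor $k/\sigma_{2}(b)$ cancels and the phase $e(-b\alpha/W_{2})$ absorbs the $e(b\alpha/W_{2})$, leaving exactly $\int_{1}^{N}e(u\alpha)\,du$, which differs from $\widehat{1_{[N]}}(\alpha)$ by $O(1)$.

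The main technical point, and the place where essentially all the work of \cite[Proposition 4.2]{Sal} is spent, is showing that the partial-summation error is $o(N)$ uniformly in $\alpha\in\mathbb{T}$ and $b\in Z(W_{2})$. This requires controlling the oscillatory weight $e(x^{k}\alpha/W_{2})$, whose derivative has size $\asymp k|\alpha|x^{k-1}/W_{2}$, and the argument naturally splits into major-arc and minor-arc regimes: on the major arcs the Riemann-sum comparison yields the main term cleanly, while on the minor arcs a Weyl-type bound on the exponential sum absorbs the error. The only new feature compared with the $W_{1}$-case is that the scaling factor $W_{2}/W_{1}=\prod_{p\leq w}p^{k}$ inflates some intermediate error terms by a factor of $W_{2}$, but since Subsection 4.1 records $W_{2}=o(\log N)$, this inflation is absorbed just as in Salmensuu's original argument. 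I anticipate the only real obstacle to be bookkeeping near the endpoints $t^{k}=W_{2}+b$ and $t^{k}=W_{2}N+b$, verifying that boundary contributions do not spoil the $o(N)$ bound; but this is handled identically to the $W_{1}$-case, so the conclusion follows.
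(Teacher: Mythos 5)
Your approach matches the paper's exactly: the paper's entire proof is the citation ``See \cite[Section 7]{Sal}'', meaning that Salmensuu's pseudorandomness argument for the $k$-th power majorant carries over with $W_{2}$ in place of $W_{1}$ precisely because Subsection 4.1 guarantees $W_{2}=o(\log N)$, which is the point you correctly isolate as the only thing needing verification. Your reconstruction of Salmensuu's argument (reparametrizing by $t$, splitting by the $\sigma_{2}(b)$ residue classes, comparing with the integral, and controlling the error via a major/minor-arc Weyl bound) is a faithful outline of that Section 7 proof, so this is the same route rather than a new one.
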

\begin{proof}
	See \cite[Section 7]{Sal}.
\end{proof}

\begin{proposition}\label{pro 4.13}(\cite[Proposition 6.3]{Gao})
	Let $ s\in \mathbb{N} $ and $ s>k(k+1) $. For $ b\in [W_{2}] $ with $ b\in Z(W_{2}) $ and $ q\in (s-1,s) $ , we have \begin{center}
		$ \| \widehat{f_{b}^{\prime}} \|_{q} \ll N^{1-1/q} $.
	\end{center}	
\end{proposition}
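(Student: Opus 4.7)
The plan is to follow Chow's restriction method for prime $k$-th powers from \cite[Section 5]{Chow}, which is the same approach that establishes \cite[Proposition 6.3]{Gao}. The present statement differs from \cite[Proposition 6.3]{Gao} only cosmetically in that the W-trick modulus is $W_2$ rather than the modulus appearing in \cite{Gao}; since every step of Chow's argument is already stated uniformly in the residue class and in the modulus (which grows only like $o(\log N)$), the entire proof transfers without modification.

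First I would unfold the Fourier transform. After the substitution $W_2 n + b = p^k$, $\widehat{f_b^\prime}(\alpha)$ becomes, up to the normalising constant $\varphi(W_2)/(W_2\sigma_2(b))$, an exponential sum of the form $\sum_{p \in A,\ p \equiv b_0\ (\bmod\ W_2)} (\log p)\, e(p^k \alpha)$ restricted to $p \leq (W_2 N + W_2)^{1/k}$, where $b_0$ ranges over fixed $k$-th roots of $b$ modulo $W_2$. I would then carry out the standard Hardy--Littlewood dissection of $\mathbb{T}$ into major and minor arcs at a height depending on a small power of $N$.

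The core estimates are then: on minor arcs, apply Kumchev-type Weyl bounds for the exponential sum $\sum_p (\log p) e(p^k \alpha)$ restricted to primes in an arithmetic progression, yielding a power saving in $N$; on major arcs, approximate the sum by the expected product of a singular series factor and an archimedean integral, and combine this with Hua's inequality for pure $k$-th power sums to get a high moment bound on $\widehat{\nu_b^\prime}$, pushed down to $\widehat{f_b^\prime}$ via the pointwise majorant $f_b^\prime \leq \nu_b^\prime$ and Proposition \ref{pro 4.11}. The hypothesis $s > k(k+1)$ enters precisely at the interpolation step, where Bourgain's $\epsilon$-removal lemma converts the $L^\infty$ minor-arc bound together with the Hua-type $2^k$-moment into the desired bound $\|\widehat{f_b^\prime}\|_q^q \ll N^{q-1}$ for every $q \in (s-1,s)$.

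The main obstacle is to keep every estimate uniform in the residue $b \bmod W_2$ and in the modulus $W_2$ itself, since $W_2$ grows with $n_0$. Concretely, the minor arc savings must exceed $W_2^{O(1)}$, and the major arc error term must be smaller than the main term by a factor polynomial in $W_2$; once this uniformity is verified, the remaining interpolation is routine. Both points are precisely what Chow's variant of the restriction machinery delivers, and they are checked in the cited references \cite[Section 5]{Chow} and \cite[proof of Proposition 6.3]{Gao}, so no new analytic input is needed beyond substituting $W_2$ for $W_1$ in the defining formulas.
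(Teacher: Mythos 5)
Your outline of Chow's restriction method (Hardy--Littlewood dissection, Weyl/Kumchev-type minor-arc bounds, major-arc asymptotics with the singular-series factor, Hua's inequality, and Bourgain's $\epsilon$-removal to interpolate down to $q \in (s-1,s)$ under the hypothesis $s > k(k+1)$) accurately describes the argument behind the cited \cite[Proposition 6.3]{Gao}, which is exactly what the paper relies on here: Proposition \ref{pro 4.13} is stated as a direct citation with no new proof. One small correction of framing: you describe $W_2$ as a modulus differing from the one in \cite{Gao} that requires a ``transfer'' of the argument, but in fact $W_2 = \prod_{1<p\le w}p^{2k}$ in (\ref{equ4}) is precisely the modulus from \cite[Equ.\ (1)]{Gao} (the paper's own remark says so), so Proposition \ref{pro 4.13} is literally identical to \cite[Proposition 6.3]{Gao} and no adaptation is needed. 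The adaptation step you describe is what is actually invoked for the $W_1$-version, Proposition \ref{pro 3.11} in Section~3, where the paper notes that Chow's method carries over to the smaller modulus $W_1 = \prod_{1<p\le w}p^k$.
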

\begin{proposition}\label{pro 4.14}
	Let $ s\in \mathbb{N} $ and $ s>k(k+1) $. For $ b\in [W_{2}] $ with $ b\in Z(W_{2}) $ , there exists $ q\in (s-1,s) $ such that \begin{center}
		$ \| \widehat{f_{b}^{\prime\prime}} \|_{q} \ll N^{1-1/q} $.
	\end{center}	
\end{proposition}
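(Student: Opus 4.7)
The plan is to transplant the argument used for Proposition \ref{pro 4.13} (which handled the prime-weighted sum), replacing the indicator of primes by the indicator of $B$ and retaining the modulus $W_2$. Since $B \subseteq \mathbb{N}^{(k)}$, we have the pointwise domination $f_b''(n) \le \nu_b''(n)$; by Proposition \ref{pro 4.12} the majorant $\nu_b''$ is pseudorandom, and as noted in Section 4.1 its mean is $\sim 1$. Thus all the structural ingredients are already in place and only a moment bound for $\widehat{f_b''}$ needs to be established.

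First I would unfold the Fourier transform via the change of variable $W_2 n + b = t^k$, obtaining
\[
\widehat{f_b''}(\alpha) \;=\; \frac{k}{\sigma_2(b)} \sum_{t} t^{k-1} \, e\!\left( \frac{t^k - b}{W_2}\, \alpha \right),
\]
where $t$ runs over integers in an interval of length $\asymp N^{1/k}$ with $t^k \in B$ and $t^k \equiv b \pmod{W_2}$. This is a weighted Weyl sum restricted to a union of arithmetic progressions, which is exactly the object for which Chow's method from \cite[Section 5]{Chow} (as used in \cite[Proposition 6.3]{Gao}) is designed.

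Next I would invoke Chow's argument verbatim, with $W_2$ in place of $W_1$. For every even integer $2r$ with $2r > k(k+1)$ it produces a Hua-type bound $\|\widehat{f_b''}\|_{2r}^{2r} \ll N^{2r-1}$; the progression restriction coming from the W-trick only affects implicit constants. Choosing $2r$ to be the largest even integer strictly below $s$, so that $2r > k(k+1)$, and interpolating this even-moment bound against the trivial estimate $\|\widehat{f_b''}\|_\infty \ll N$, one obtains $\|\widehat{f_b''}\|_q^q \ll N^{q-1}$ for every real $q > 2r$, and in particular for some $q \in (s-1, s)$.

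The main obstacle is justifying that Chow's Hua-type mean-value estimate, which was originally deployed for a Weyl sum over primes, transplants cleanly to a sum weighted by the indicator of a positive-density subset $B$ of $k$-th powers. This is handled by the majorant inequality: for each even integer $2r$, expanding $\int |\widehat{f_b''}(\alpha)|^{2r} \, d\alpha$ as a solution count of a Waring-type system and using $f_b'' \le \nu_b''$, one dominates it by the analogous count for $\nu_b''$, which is in turn the classical Hua count of integer solutions of $\sum_{i \le r} t_i^k = \sum_{j \le r} t_{r+j}^k$ with each $t_i$ in a fixed residue class modulo a divisor of $W_2$. The bound $\ll N^{2r-1}$ for $2r > k(k+1)$ then follows from Hua's inequality in arithmetic progressions, completing the proof.
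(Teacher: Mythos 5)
The paper's proof is a direct citation to \cite[Section 8]{Sal}, where Salmensuu establishes exactly this restriction estimate (stated there for $W_{1}$, the argument going through unchanged with $W_{2}$ in its place). Your proposal instead tries to port the Chow-style argument underlying \cite[Proposition 6.3]{Gao}, and the key step does not close. You assert that ``Chow's argument verbatim'' yields the even-moment bound $\|\widehat{f_{b}^{\prime\prime}}\|_{2r}^{2r}\ll N^{2r-1}$, backed by the majorant inequality $f_{b}^{\prime\prime}\le \nu_{b}^{\prime\prime}$ and ``Hua's inequality in arithmetic progressions.'' But Hua's lemma and the Vinogradov-type mean value theorems (which is where the threshold $2r>k(k+1)$ actually comes from, not from Hua's $2^{j}$-th moment bounds) produce upper bounds carrying an $N^{\epsilon}$ loss. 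With that loss the final H\"older interpolation against $\|\widehat{f_{b}^{\prime\prime}}\|_{\infty}\ll N$ only gives $\|\widehat{f_{b}^{\prime\prime}}\|_{q}^{q}\ll N^{q-1+\epsilon}$, not the required $N^{q-1}$. Removing the $\epsilon$ is the whole content of the proposition and requires a Bourgain-type major/minor arc dissection with a pointwise Weyl estimate for $\widehat{\nu_{b}^{\prime\prime}}$ on the minor arcs --- which is precisely what \cite[Section 8]{Sal} carries out and what a single interpolation between an even moment and $L^{\infty}$ cannot replace.

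Two further points. Chow's argument is tailored to exponential sums over \emph{primes} (Vaughan-type decompositions and the like), so it does not apply ``verbatim'' to the $k$-th-power function $f_{b}^{\prime\prime}$; the remark preceding the proof of Theorem \ref{theorem 1.1} makes this asymmetry explicit by noting that the ``for all $q\in(s-1,s)$'' strengthening is available (via Chow) only for the prime-power functions of Propositions \ref{pro 3.11}, \ref{pro 3.13}, \ref{pro 4.13}, \ref{pro 4.15}, whereas Propositions \ref{pro 3.12} and \ref{pro 4.14} assert only the existence of \emph{some} admissible $q$. Your claimed conclusion ``for every real $q>2r$'' therefore overstates what even a correct argument currently delivers and conflicts with the statement being proved. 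Finally, for $s$ at the low end of the admissible range (e.g.\ $s=k(k+1)+1$ or $s=k(k+1)+2$) the largest even integer strictly below $s$ equals $k(k+1)$ and does not exceed it, so your chosen $2r$ is not available there.
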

\begin{proof}
	See \cite[Section 8]{Sal}.
\end{proof}
\begin{proposition}\label{pro 4.15}(\cite[Proposition 6.4]{Gao})
	Let $ s>k(k+1) $. For $ b\in [W_{2}] $ with $ b\in Z(W_{2}) $ and  $ q\in (s-1,s) $ , we have \begin{center}
		$ \| \widehat{\mathbf{f}}_{b} \|_{q} \ll N^{1-1/q} $.
	\end{center}	
\end{proposition}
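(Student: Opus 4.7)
\textbf{Proof proposal for Proposition \ref{pro 4.15}.} The plan is to recognize that $\mathbf{f}_b$ and $f'_b$ (from Proposition \ref{pro 4.13}) have identical structure: both are non-negative weighted indicators of the event $W_2 n + b = p^k$ for $p$ lying in some subset of primes (namely $\mathcal{P}$ or $A$), and both are dominated pointwise by the same full-prime majorant $\nu'_b$ defined in (\ref{equ6}). Consequently the proof of Proposition \ref{pro 4.13}, which follows Chow's method from \cite[Section 5]{Chow} as adapted in \cite[proof of Proposition 6.4]{Gao}, carries over with no essential change.

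The first step is to write $\|\widehat{\mathbf{f}}_b\|_q^q = \int_{\mathbb{T}} |\widehat{\mathbf{f}}_b(\alpha)|^q \, d\alpha$ and split the torus into major and minor arcs at a scale dictated by $W_2$ and $N$. On the major arcs, I invoke Proposition \ref{pro 4.11} to approximate $\widehat{\nu'_b}(\alpha)$ by $\widehat{1_{[N]}}(\alpha)$ up to an $o(N)$ error; since $\mathbf{f}_b \leq \nu'_b$ pointwise, this pseudorandomness controls the major arc contribution to $\|\widehat{\mathbf{f}}_b\|_q^q$ by $O(N^{q-1})$ for the range of $q$ under consideration.

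On the minor arcs, the crucial observation is that for any positive integer $m$,
\[
\|\widehat{\mathbf{f}}_b\|_{2m}^{2m} \leq \|\widehat{\nu'_b}\|_{2m}^{2m},
\]
because the $2m$-th moment counts weighted representations of $0$ as an alternating sum and all weights are non-negative with $\mathbf{f}_b \leq \nu'_b$. Combined with a Hua--Vinogradov type mean value estimate for the prime $k$-th power exponential sum — valid for $s > k(k+1)$, as invoked in the proof of Proposition \ref{pro 4.13} — this yields a bound of the shape $\|\widehat{\mathbf{f}}_b\|_{2s}^{2s} \ll N^{2s-1}$. Hölder's inequality interpolating between this estimate and the trivial bound $\|\widehat{\mathbf{f}}_b\|_\infty \ll N$ then delivers $\|\widehat{\mathbf{f}}_b\|_q^q \ll N^{q-1}$ for every $q \in (s-1,s)$.

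The main technical ingredient is the Hua--Vinogradov mean value estimate for exponential sums over prime $k$-th powers, which is precisely the obstacle already addressed in \cite[proof of Proposition 6.4]{Gao}. Since that estimate depends only on the majorant $\nu'_b$ and not on the particular subset $\mathcal{P} \subseteq \mathbb{P}$ used to define $\mathbf{f}_b$, it may be quoted without modification, and the conclusion follows.
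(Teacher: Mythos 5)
Your key structural observation is correct and matches the paper's (implicit) argument: $\mathbf{f}_b$ is non-negative and dominated pointwise by the same majorant $\nu'_b$ as $f'_b$, so the restriction estimate of \cite[Proposition 6.4]{Gao} transfers verbatim — and indeed the paper handles this proposition purely by citation, which is exactly what your final paragraph amounts to. The monotonicity $\|\widehat{\mathbf{f}}_b\|_{2m}^{2m} \leq \|\widehat{\nu'_b}\|_{2m}^{2m}$ for even moments of a non-negative minorant is also correct.

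However, the middle of your sketch misstates how Chow's method actually works, and as written the steps would not deliver the stated range of $q$. The Hölder interpolation goes the wrong way: from $\|\widehat{\mathbf{f}}_b\|_{2s}^{2s} \ll N^{2s-1}$ and $\|\widehat{\mathbf{f}}_b\|_\infty \ll N$ one gets $\|\widehat{\mathbf{f}}_b\|_q^q \leq \|\widehat{\mathbf{f}}_b\|_\infty^{q-2s}\,\|\widehat{\mathbf{f}}_b\|_{2s}^{2s}$, which requires $q \geq 2s$, whereas the target is $q \in (s-1,s)$ and hence $q < s < 2s$. One cannot descend from a high even moment to a smaller exponent by Hölder against $L^\infty$. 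Likewise, the pointwise pseudorandomness $|\widehat{\nu'_b}(\alpha)-\widehat{1_{[N]}}(\alpha)|=o(N)$ does not bound $|\widehat{\mathbf{f}}_b(\alpha)|$ on major arcs, where it can be of size $\asymp N$; what is needed is a measure estimate for the level sets of $\widehat{\mathbf{f}}_b$, not a pointwise one. What Chow (and hence \cite{Gao}) actually does is Bourgain–Keil $\varepsilon$-removal: first establish $\|\widehat{\mathbf{f}}_b\|_q^q \ll_\varepsilon N^{q-1+\varepsilon}$ from an even-moment bound for $\widehat{\nu'_b}$ at an exponent \emph{below} $q$ together with a level-set estimate, and then remove the $\varepsilon$ by a finer analysis of the large level sets using the major-arc approximation of $\widehat{\nu'_b}$. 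Your conclusion is correct, but the interpolation and major-arc steps in your sketch would need to be replaced by this $\varepsilon$-removal machinery.
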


$ \mathit{Proof \ of \ Theorem \ \ref{theorem 1.5}-\ref{theorem 1.8}} $. The proofs of Theorem \ \ref{theorem 1.5}-\ref{theorem 1.8} are omitted, since they follow directly by repeating the proofs of Theorem \ \ref{theorem 1.1}-\ref{theorem 1.4} with Propositions \ref{pro 4.7}-\ref{pro 4.15} in place of Propositions \ref{pro 3.5}-\ref{pro 3.13} . \   \quad   $ \qedsymbol $

\section*{Acknowledgements}
The author would like to thank Professor Yonghui Wang for constant encouragement and for many valuable guidance, and thank Wenying Chen for helpful discussions. 

\bibliographystyle{amsplain}

\begin{thebibliography}{100}
	\bibitem {Gre} B. Green, Roth’s theorem in the primes, 
	Ann. Math. 161 (2005) 1609–1636. \url{https://doi.org/10.4007/annals.2005.161.1609}
	\bibitem {LP} H. Li, H. Pan, A density version of Vinogradov’s three primes theorem, Forum Math. 22 (2010) 699–714. \url{https://doi.org/10.1515/forum.2010.039}
	\bibitem {Sal} J. Salmensuu, A density version of Waring’s problem, Acta Arith. 199 (2021) 383–412. \url{https://doi.org/10.4064/aa200601-1-2}
	\bibitem {Chow} S. Chow, Roth-Waring-Goldbach, Int. Math. Res. Not.  (2018) 2341–2374. \url{https://doi.org/10.1093/imrn/rnw307}
	\bibitem {Shao} X. Shao, A density version of the Vinogradov three primes theorem, Duke Math. J. 163 (2014) 489–512. \url{https://doi.org/10.1215/00127094-2410176}
	\bibitem {Gao} M.Gao, A density version of Waring-Goldbach problem. arXiv preprint arXiv:2308.06286, 2023. 
	\url{https://doi.org/10.48550/arXiv.2308.06286}
	
	
	
	
	
	
	
	
\end{thebibliography}

\end{document}